    \newcommand{\dom}{\mbox{\rm dom}}
    \newcommand{\ran}{\mbox{\rm ran}}
    \newcommand{\thzfc}{\mathrm{ZFC}}
    \newcommand{\Bwf}{\mathcal{B}}
    \newcommand{\Iwf}{\mathcal{I}}
    \newcommand{\Mwf}{\mathcal{M}}
    \newcommand{\Nwf}{\mathcal{N}}
    \newcommand{\Pwf}{\mathcal{P}}
    \newcommand{\Swf}{\mathcal{S}}
    \newcommand{\afrak}{\mathfrak{a}}
    \newcommand{\bfrak}{\mathfrak{b}}
    \newcommand{\cfrak}{\mathfrak{c}}
    \newcommand{\dfrak}{\mathfrak{d}}
    \newcommand{\pfrak}{\mathfrak{p}}
    \newcommand{\menos}{\smallsetminus}
    \newcommand{\frestr}{\!\!\upharpoonright\!\!}
    \newcommand{\add}{\mbox{\rm add}}
    \newcommand{\cov}{\mbox{\rm cov}}
    \newcommand{\non}{\mbox{\rm non}}
    \newcommand{\cof}{\mbox{\rm cof}}
    \newcommand{\limdir}{\mbox{\rm limdir}}
    \newcommand{\Bor}{\mathds{B}}
    \newcommand{\Cor}{\mathds{C}}
    \newcommand{\Dor}{\mathds{D}}
    \newcommand{\Eor}{\mathds{E}}
    \newcommand{\Hor}{\mathds{H}}
    \newcommand{\Loc}{\mathds{LOC}}
    \newcommand{\Por}{\mathds{P}}
    \newcommand{\Pbb}{\mathds{P}}
    \newcommand{\Qor}{\mathds{Q}}
    \newcommand{\Sor}{\mathds{S}}
    \newcommand{\Qnm}{\dot{\mathds{Q}}}
    \newcommand{\Ncal}{\mathcal{N}}
    \newcommand{\Mcal}{\mathcal{M}}
    \newcommand{\Ical}{\mathcal{I}}
    \newcommand{\SNcal}{\mathcal{SN}}
    \newcommand{\cf}{\mbox{\rm cf}}
    \newcommand{\la}{\langle}
    \newcommand{\ra}{\rangle}
   \newcommand{\Abf}{\mathbf{A}}
\newcommand{\Dbf}{\mathbf{D}}
\newcommand{\mbf}{\mathbf{m}}
\newcommand{\sbf}{\mathbf{s}}
\newcommand{\seq}{\mathrm{seq}}
\newcommand{\Fr}{\mathrm{Fr}}
\newcommand{\Rbf}{\mathbf{R}}
\newcommand{\D}{\mathbf{D}}
\newcommand{\Cn}{\mathbf{Cn}}
\newcommand{\Lc}{\mathbf{Lc}}
\newcommand{\Hcal}{\mathcal{H}}
\newcommand{\Scal}{\mathcal{S}}
\newcommand{\id}{\mathrm{id}}
\newcommand{\Ed}{\mathbf{Ed}}
\newcommand{\hgt}{\mathrm{ht}}
\newcommand{\Id}{\mathbf{Id}}
\newcommand{\Md}{\mathbf{Md}}
\newcommand{\rojo}[1]{{\color{red}#1}}
\newcommand{\Diego}[1]{{\color{blue}Diego: #1}}
\title[Filter-linkedness]{Filter-linkedness and its effect on preservation of cardinal characteristics}
\author{J\"org Brendle}
\address{Graduate School of System Informatics, Kobe University,
Rokko--dai 1--1, Nada--ku, 657--8501 Kobe, Japan}
\email{brendle@kobe-u.ac.jp}
\author{Miguel A. Cardona}
\address{TU Wien, Faculty of Mathematics and Geoinformation, Institute of Discrete Mathematics and Geometry, Wiedner Hauptstrasse 8--10, A--1040 Vienna, Austria }
\email{miguel.montoya@tuwien.ac.at}
\urladdr{https://www.researchgate.net/profile/Miguel\_Cardona\_Montoya}
\author{Diego A. Mej\'ia}
\address{Creative Science Course (Mathematics), Faculty of Science, Shizuoka University, Ohya 836, Suruga--ku, 422--8529 Shizuoka, Japan}
\email{diego.mejia@shizuoka.ac.jp}
\urladdr{http://www.researchgate.com/profile/Diego\_Mejia2}
\thanks{The first author was partially supported by Grants-in-Aid for Scientific Research (C) 15K04977 and 18K03398, Japan Society for the Promotion of Science; the second author was supported by the Austrian Science Fund (FWF) P30666 and he is a recipient of a DOC Fellowship of the Austrian Academy of Sciences at the Institute of Discrete Mathematics and Geometry, TU Wien; and the third author was supported by the grant no. IN201711, Direcci\'on Operativa de Investigaci\'on, Instituci\'on Universitaria Pascual Bravo, and by Grant-in-Aid for Early Career Scientists 18K13448, Japan Society for the Promotion of Science.}
\subjclass[2010]{03E17, 03E15, 03E35, 03E40}
\keywords{Filter-linked, filter-Knaster, unbounded families, mad families, Cicho\'n's diagram, matrix iterations}
\begin{document}

\makeatletter
\def\@roman#1{\romannumeral #1}
\makeatother

\newcounter{enuAlph}
\renewcommand{\theenuAlph}{\Alph{enuAlph}}

\renewcommand{\theequation}{\thesection.\arabic{equation}}

\theoremstyle{plain}
  \newtheorem{theorem}{Theorem}[section]
  \newtheorem{corollary}[theorem]{Corollary}
  \newtheorem{lemma}[theorem]{Lemma}
  \newtheorem{mainlemma}[theorem]{Main Lemma}
  \newtheorem{prop}[theorem]{Proposition}
  \newtheorem{clm}[theorem]{Claim}
  \newtheorem{exer}[theorem]{Exercise}
  \newtheorem{question}[theorem]{Question}
  \newtheorem{problem}[theorem]{Problem}
  \newtheorem{conjecture}[theorem]{Conjecture}
  \newtheorem*{thm}{Theorem}
  \newtheorem{teorema}[enuAlph]{Theorem}
  \newtheorem*{corolario}{Corollary}
\theoremstyle{definition}
  \newtheorem{definition}[theorem]{Definition}
  \newtheorem{example}[theorem]{Example}
  \newtheorem{remark}[theorem]{Remark}
  \newtheorem{notation}[theorem]{Notation}
  \newtheorem{context}[theorem]{Context}

  \newtheorem*{defi}{Definition}
  \newtheorem*{acknowledgements}{Acknowledgements}

%\maketitle

%\tableofcontents

\begin{abstract}
   We introduce the property ``$F$-linked'' of subsets of posets for a given free filter $F$ on the natural numbers, and define the properties ``$\mu$-$F$-linked'' and ``$\theta$-$F$-Knaster'' for posets in a natural way. We show that $\theta$-$F$-Knaster posets preserve strong types of unbounded families and of maximal almost disjoint families.

   Concerning iterations of such posets, we develop a general technique to construct $\theta$-$\Fr$-Knaster posets (where $\Fr$ is the Frechet ideal) via matrix iterations of ${<}\theta$-ultrafilter-linked posets (restricted to some level of the matrix). This is applied to prove consistency results about Cicho\'n's diagram (without using large cardinals) and to prove the consistency of the fact that, for each Yorioka ideal, the four cardinal invariants associated with it are pairwise different.

   At the end, we show that three strongly compact cardinals are enough to force that Cicho\'n's diagram can be separated into $10$ different values.
\end{abstract}

\maketitle

\section{Introduction}\label{SecIntro}

The third author~\cite{mejiavert} introduced the notion of \emph{Frechet-linkedness} (abbreviated \emph{$\Fr$-linkedness}) inspired by Miller's proof that $\Eor$, the standard $\sigma$-centered poset that adds an eventually different real (see Definition~\ref{DefEDforcing}), does not add dominating reals. The third author showed that $\Eor$ and random forcing are $\sigma$-$\Fr$-linked, and that no $\sigma$-$\Fr$-linked poset adds dominating reals. Moreover, he showed that such posets preserve a certain type of mad (maximal almost disjoint) families (like those added by Hechler's poset $\Hor_\kappa$ for adding a mad family of size $\kappa$).

Frechet-linkedness is a notion of subsets of posets: given a poset $\Por$ and $Q\subseteq\Por$, \emph{$Q$ is Frechet-linked (in $\Por$)} if, for any countable sequence $\la p_n:n<\omega\ra$ of members of $Q$, there is some $q\in\Por$ forcing that $p_n$ is in the generic filter for infinitely many $n$. Given a cardinal $\mu$, a poset is \emph{$\mu$-$\Fr$-linked} if it is the union of $\mu$-many $\Fr$-linked subsets. A Knaster-type notion can be defined in the natural way: a poset $\Por$ is \emph{$\theta$-$\Fr$-Knaster} if any subset of $\Por$ of size $\theta$ contains a $\Fr$-linked subset of the same size. It is clear that any $\mu$-$\Fr$-linked poset is $\mu^+$-$\Fr$-Knaster and, for regular $\theta$, any $\theta$-$\Fr$-Knaster poset satisfies the $\theta$-Knaster property (see Remark~\ref{RemKnaster}).

The notion $\Fr$-Knaster appears implicitly in several places. For example, using finitely additive measures along FS (finite support) iterations, Shelah~\cite{ShCov} constructed an $\aleph_1$-$\Fr$-Knaster poset to force $\cov(\Nwf)$ with countable cofinality, while Kellner, Shelah and T\v{a}nasie~\cite{KST} used the same technique to construct a $\theta$-$\Fr$-Knaster poset that forces
\begin{equation}\label{leftKST}
    \aleph_1<\add(\Nwf)<\bfrak=\theta<\cov(\Nwf)<\non(\Mwf)<\cov(\Mwf)=\cfrak.
\end{equation}
Using the analog of this technique for ultrafilters, Goldstern, Shelah and the third author~\cite{GMS} constructed a $\theta$-$\Fr$-Knaster poset that forces
\begin{equation}\label{leftGMS}
    \aleph_1<\add(\Nwf)<\cov(\Nwf)<\bfrak=\theta<\non(\Mwf)<\cov(\Mwf)=\cfrak.
\end{equation}
These two results state the both possible ways to separate the cardinal invariants in the left side of Cicho\'n's diagram (Figure~\ref{FigCichon}). We assume that the reader is familiar with this diagram.

\begin{figure}
\begin{center}
  \includegraphics{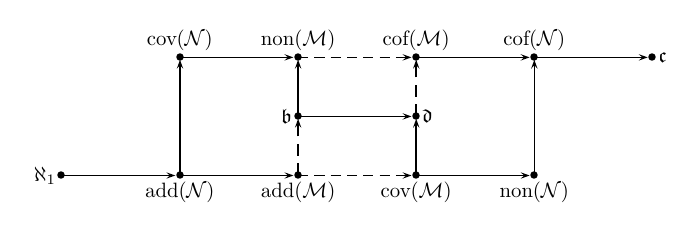}
  \caption{Cicho\'n's diagram. The arrows mean $\leq$ and dotted arrows represent
  $\add(\Mwf)=\min\{\bfrak,\cov(\Mwf)\}$ and $\cof(\Mwf)=\max\{\dfrak,\non(\Mwf)\}$.}
  \label{FigCichon}
\end{center}
\end{figure}

The main challenge in both results is to force $\bfrak=\theta$ while iterating restrictions of $\Eor$ to small models (for (\ref{leftGMS})), or similar restrictions of random forcing and of a variation of $\Eor$ (for (\ref{leftKST})). In fact, from both arguments, it can be inferred that $\theta$-$\Fr$-Knaster posets preserve a strong type of unbounded families (see Theorem~\ref{FrKnasterpresunb}).

Hechler's poset $\Hor_\theta$ for adding a mad family of size $\theta$ was originally defined in~\cite{BrF} (see Definition~\ref{DefHz}).
Since $\mu$-$\Fr$-linked posets preserve the mad family added by $\Hor_\theta$ for $\mu<\theta$, it is natural to ask:

\begin{question}\label{Qmad}
   If $\theta$ is a regular uncountable cardinal, does any $\theta$-$\Fr$-Knaster poset preserve the mad family added by $\Hor_\theta$?
\end{question}

Such a mad family falls into the category of what we call \emph{$\theta$-strong-$\Md$ a.d.\ family} (see Definition~\ref{DefMd} and Lemma~\ref{HechlerMad}), which are also preserved by $\mu$-$\Fr$-linked posets for $\mu<\theta$ according to~\cite{mejiavert}. Moreover, it was proved in~\cite{FFMM,mejiavert} that a large class of FS iterations preserve the mad family added by $\Hor_\theta$ for $\theta$ regular, which is used to prove that $\afrak=\bfrak$ can be forced (where $\afrak$ is the minimal size of an infinite mad family) in various models where Cicho\'n's diagram is divided into several values. In fact, this class is contained in the class of FS iterations of $\mu$-$\Fr$-linked posets with $\mu<\theta$, but since any such iteration yields a $\theta$-$\Fr$-Knaster poset (see~\cite[Sect.~5]{mejiavert} or Lemma~\ref{Flinkit}), the previous argument is nicely generalized with a positive answer to Question~\ref{Qmad}. Even more, such a positive answer will imply that it can be forced, in addition, that $\afrak=\theta$ in both (\ref{leftKST}) and (\ref{leftGMS}).

In this paper, we answer Question~\ref{Qmad} in the positive.

\begin{teorema}[Theorem~\ref{FrKnastermad}]\label{Kmad}
    If $\theta$ is a regular uncountable cardinal then any $\theta$-$\Fr$-Knaster poset preserves all the $\theta$-strong-$\Md$ (a.d.) families from the ground model.
\end{teorema}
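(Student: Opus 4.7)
The plan is to generalise the preservation argument of \cite{mejiavert} for $\mu$-$\Fr$-linked posets (with $\mu<\theta$) by inserting a Knaster thinning as an extra first move. Let $\Awf$ be a $\theta$-strong-$\Md$ a.d.\ family in the ground model and let $\Por$ be $\theta$-$\Fr$-Knaster; suppose towards contradiction that some $\Por$-name $\dot{x}$ and condition $p^{*}\in\Por$ satisfy $p^{*}\Vdash\dot{x}\in[\omega]^{\omega}$ and $|\dot{x}\cap A|<\omega$ for every $A\in\Awf$. The first step is to use the $\Md$-structure of $\Awf$ to produce, for each $\alpha<\theta$, a condition $p_\alpha\leq p^{*}$ together with a finite set $s_\alpha\subseteq\omega$ such that $p_\alpha\Vdash s_\alpha\subseteq\dot{x}$, arranged so that the sequence $\{s_\alpha:\alpha<\theta\}$ is combinatorially ``wild'' with respect to $\Awf$, in the sense that no finite union of members of $\Awf$ almost covers cofinally many $s_\alpha$. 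This parallels the corresponding construction in the $\mu$-$\Fr$-linked proof and is where the definition of \emph{strong}-$\Md$ is brought to bear.

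Next, apply the $\theta$-$\Fr$-Knaster hypothesis to $\{p_\alpha:\alpha<\theta\}$ to extract $I\in[\theta]^\theta$ with $\{p_\alpha:\alpha\in I\}$ $\Fr$-linked. Using the wildness of $\{s_\alpha:\alpha\in I\}$, find a countable sequence $\la\alpha_n:n<\omega\ra$ in $I$ together with some $A\in\Awf$ such that $\bigcup_{n<\omega}(s_{\alpha_n}\cap A)$ is infinite. By $\Fr$-linkedness there is $q\in\Por$ forcing $p_{\alpha_n}\in\dot{G}$ for infinitely many $n$; consequently $q\Vdash|\dot{x}\cap A|=\omega$, contradicting the assumption on $\dot{x}$.

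The main obstacle is the second selection: after thinning to $I$ via Knasterness one still has to show that the surviving $s_\alpha$'s contain a countable subfamily whose union meets some $A\in\Awf$ infinitely. This is precisely what the word \emph{strong} in $\theta$-strong-$\Md$ should be tailored to guarantee, and I expect the verification to reduce to a combinatorial lemma on the $\Md$-matrix that indexes $\Awf$, exploiting the regularity of $\theta$ to avoid being trapped inside a single ``row'' of the matrix once $|I|=\theta$.
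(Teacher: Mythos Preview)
Your proposal attacks the wrong statement. ``Preserving a $\theta$-strong-$\Md$ family'' does not mean that $\Awf$ remains \emph{mad}; it means that $\Awf$ remains strongly $\theta$-$\Md(\Awf)$-unbounded in the extension, i.e., that for every new $h\in\omega^{\omega\times[\Awf]^{<\aleph_0}}$ the set $\{a\in\Awf : a\sqsubset^{*}h\}$ has size $<\theta$. Your contradiction hypothesis (an infinite $\dot{x}$ almost disjoint from every $A\in\Awf$) is the failure of maximality, which is a strictly weaker conclusion: by Lemma~\ref{DiagMain} the strong-$\Md$ property implies madness, but madness preservation does not give back the strong-$\Md$ property. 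Even if the sketch succeeded, it would not prove the theorem.

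The paper's proof starts with the correct contradiction hypothesis: a name $\dot{h}$ for a function in $\omega^{\omega\times[\Awf]^{<\aleph_0}}$ and a condition forcing $|\{a\in\Awf:a\sqsubset^{*}\dot{h}\}|\geq\theta$. One then picks, for $\theta$-many $a\in\Awf$, a condition $p_a$ and a single $m$ with $p_a\Vdash\forall n\geq m\,\forall F\in[\Awf]^{<\aleph_0}\big([n,\dot{h}(n,F))\smallsetminus\bigcup F\nsubseteq a\big)$, thins to a $\Fr$-linked set $A''$ by Knasterness, and then uses the strong-$\Md$ hypothesis \emph{on $A''$} (this is where regularity of $\theta$ and $|A''|=\theta$ matter) to find $k\geq m$ and $F\in[\Awf]^{<\aleph_0}$ such that for every $l$ there is $a_l\in A''$ with $[k,l)\smallsetminus\bigcup F\subseteq a_l$. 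A $q$ forcing $p_{a_l}\in\dot{G}$ for infinitely many $l$ then yields $l\in\dot{W}$ with $l>\dot{h}(k,F)$, contradicting what $p_{a_l}$ forces. Note that the finite objects attached to the $p_a$'s are elements $a\in\Awf$ and a bound $m$, not finite subsets $s_\alpha$ of a hypothetical $\dot{x}$; and the contradiction is internal to the relation $\sqsubset^{*}$, not to almost-disjointness.
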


\begin{corolario}
    In both (\ref{leftKST}) and (\ref{leftGMS}) it can be forced, in addition, that $\afrak=\bfrak$.
\end{corolario}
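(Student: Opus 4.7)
The plan is to add the witnessing mad family at the bottom of the construction and then invoke Theorem~\ref{Kmad} to keep it maximal. Concretely, I would first force with the Hechler mad-family poset $\Hor_\theta$ over a suitable ground model, which by Lemma~\ref{HechlerMad} produces a $\theta$-strong-$\Md$ a.d.\ family $\Awf$ of size $\theta$. On top of this $\Hor_\theta$-extension I would then run exactly the matrix iteration from \cite{KST} (respectively from \cite{GMS}) that forces (\ref{leftKST}) (respectively (\ref{leftGMS})).

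The point of working over the $\Hor_\theta$-extension is that the second step is $\theta$-$\Fr$-Knaster: the posets iterated in the matrix construction are small restrictions of $\Eor$, of random forcing, and of their variants, all of which are $\mu$-$\Fr$-linked for some $\mu<\theta$, so the finite-support matrix machinery cited in the introduction (\cite[Sect.~5]{mejiavert}, or Lemma~\ref{Flinkit}) delivers a $\theta$-$\Fr$-Knaster iteration. By Theorem~\ref{Kmad}, this iteration preserves the $\theta$-strong-$\Md$ property of $\Awf$; hence $\Awf$ remains maximal in the final extension, witnessing $\afrak\leq|\Awf|=\theta$. Combined with the ZFC inequality $\bfrak\leq\afrak$ and the fact that the matrix iteration forces $\bfrak=\theta$, this yields $\afrak=\bfrak=\theta$ as required.

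The only nontrivial bookkeeping is to check that prepending $\Hor_\theta$ does not perturb the values of the remaining Cicho\'n-diagram invariants in (\ref{leftKST}) and (\ref{leftGMS}). This is routine: $\Hor_\theta$ is $\sigma$-centered of size $\theta$, it preserves the cardinal-arithmetic set-up the matrix constructions require of their ground model, and the constructions themselves are engineered to force the prescribed values over any such preparatory model. The conceptual content of the corollary therefore reduces entirely to Theorem~\ref{Kmad} together with the identification of the Hechler family as $\theta$-strong-$\Md$, both of which are supplied in the body of the paper; the main obstacle, if any, is to verify once and for all that the matrix iteration constructions of \cite{KST,GMS} indeed sit within the framework of FS iterations of $\mu$-$\Fr$-linked posets with $\mu<\theta$ so that Lemma~\ref{Flinkit} applies uniformly in both cases.
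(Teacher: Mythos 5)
Your proof is correct and follows essentially the paper's intended route: add a $\theta$-strong-$\Md$ a.d.\ family via $\Hor_\theta$ (Lemma~\ref{HechlerMad}), note that the GMS/KST iteration is $\theta$-$\Fr$-Knaster, and invoke Theorem~\ref{FrKnastermad} to preserve maximality, so that $\afrak\leq\theta=\bfrak\leq\afrak$. One minor slip: the paper only credits $\Hor_\theta$ with the Knaster property, not $\sigma$-centeredness (which fails once $\theta>\cfrak$), but this is not load-bearing for your argument.
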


Using four strongly compact cardinals, Goldstern, Kellner and Shelah~\cite{GKS} applied Boolean ultrapowers (see~\cite{KTT}) to the poset that forces (\ref{leftGMS}) to prove the consistency of
\begin{equation}\label{maxGKS}
    \aleph_1<\add(\Nwf)<\cov(\Nwf)<\bfrak<\non(\Mwf)<\cov(\Mwf)<\dfrak<\non(\Nwf)<\cof(\Nwf)<\cfrak.
\end{equation}
With the same method, in~\cite{KST} Boolean ultrapowers of the poset that forces (\ref{leftKST}) guarantee the consistency of
\begin{equation}\label{maxKST}
    \aleph_1<\add(\Nwf)<\bfrak<\cov(\Nwf)<\non(\Mwf)<\cov(\Mwf)<\non(\Nwf)<\dfrak<\cof(\Nwf)<\cfrak.
\end{equation}
These results are examples of Cicho\'n's diagram divided into 10 different values (the maximum possible).

In this paper, we are also interested to get strengthenings or variations of (\ref{leftKST}), (\ref{leftGMS}), (\ref{maxGKS}) and (\ref{maxKST}) with respect to $\thzfc$ alone or with weaker large cardinal assumptions. The following result strengthens (\ref{leftGMS}) and solves~\cite[Question 7.1]{GMS}.

\begin{teorema}[Theorem~\ref{Apl1}]\label{thm7val}
    If $\theta_0\leq\theta_1\leq\theta_2\leq\mu\leq\nu$ are uncountable regular cardinals and $\lambda$ is a cardinal such that $\lambda^{{<}\theta_2}=\lambda$, then there is a ccc poset that forces (see Figure~\ref{Fig7val})
    \begin{multline*}
        \add(\Nwf)=\theta_0\leq\cov(\Nwf)=\theta_1\leq\bfrak=\afrak=\theta_2
        \leq\non(\Mwf)=\mu\\
        \leq\cov(\Mwf)=\nu\leq\dfrak=\non(\Nwf)=\cfrak=\lambda.
    \end{multline*}
\end{teorema}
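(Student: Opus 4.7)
The plan is to extend the matrix iteration construction used in \cite{GMS} to force (\ref{leftGMS}), incorporating two additional features beyond that result: a separation of the right-hand gap $\non(\Mwf)<\cov(\Mwf)<\dfrak$, and the new equation $\afrak=\bfrak=\kappa$ obtained by exploiting Theorem \ref{Kmad}.

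I would start from a ground model satisfying GCH and construct a two-dimensional matrix of ccc extensions whose vertical coordinates are indexed by the cardinals $\theta_0\leq\theta_1\leq\kappa\leq\mu\leq\nu$ and whose horizontal length is $\lambda$. The bookkeeping is organized to handle all nice names for potential witnesses of the various cardinal invariants appearing in the intermediate models. At a typical horizontal stage $\alpha<\lambda$ the iterand is the restriction to the intermediate model at some chosen vertical coordinate of a $<\theta$-ultrafilter-linked poset, where the choice of $\theta$ depends on the invariant being targeted: an amoeba-for-measure restricted to the level $\theta_0$ to push $\add(\Nwf)$ up to $\theta_0$; random forcing restricted to level $\theta_1$ to push $\cov(\Nwf)$ up to $\theta_1$; $\Eor$ restricted to level $\mu$ to push $\non(\Mwf)$ up to $\mu$; and Cohen forcing (or another suitable Suslin ccc poset) restricted to level $\nu$ to push $\cov(\Mwf)$ up to $\nu$. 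At one designated early stage I plug in Hechler's poset $\Hor_\kappa$ to add a $\kappa$-strong-$\Md$ a.d.\ family (Lemma \ref{HechlerMad}), which witnesses $\afrak\leq\kappa$. The upper bounds $\add(\Nwf)\leq\theta_0$, $\cov(\Nwf)\leq\theta_1$, $\non(\Mwf)\leq\mu$ and $\cov(\Mwf)\leq\nu$ then follow by confining the relevant generic reals to the corresponding vertical level, while $\dfrak\leq\lambda$ and $\cof(\Nwf)\leq\lambda$ are read off from the length of the iteration together with the hypothesis $\lambda^{<\kappa}=\lambda$.

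The hard part is the matching lower bounds $\kappa\leq\bfrak$ and $\kappa\leq\afrak$. Here the key observation is that, by \cite[Sect.~5]{mejiavert} (see also Lemma \ref{Flinkit}), the entire matrix iteration composed of $<\kappa$-ultrafilter-linked iterands (and of $\Hor_\kappa$, which fits into the same framework) is $\kappa$-$\Fr$-Knaster. Lemma \ref{FrKnasterpresunb} then preserves the unbounded family sitting at the $\kappa$-th vertical level, giving $\bfrak\geq\kappa$. For $\afrak\geq\kappa$, I would apply the new Theorem \ref{Kmad}: the same $\kappa$-$\Fr$-Knaster property of the tail iteration preserves the $\kappa$-strong-$\Md$ a.d.\ family added by $\Hor_\kappa$, so this family remains maximal in the final extension.

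The main obstacle will be the careful design of the vertical coordinates and of the bookkeeping so that (i) each intermediate model at level $\xi$ is closed under the iterand defined at level $\xi$ and contains cofinally many witnesses for the invariant it is meant to bound above, (ii) every nice name for a real is absorbed into some level strictly below $\xi$ when needed, and (iii) the $\kappa$-$\Fr$-Knaster property transfers uniformly along the matrix, so that the two preservation lemmas can be applied in a single sweep. Once this skeleton is in place, all equalities in the statement follow by combining the upper bounds from vertical confinement with the lower bounds from the two preservation theorems.
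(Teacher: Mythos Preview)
Your outline has the right ingredients but contains a genuine gap at the most delicate point. You write that ``by \cite[Sect.~5]{mejiavert} (see also Lemma~\ref{Flinkit}), the entire matrix iteration composed of $<\kappa$-ultrafilter-linked iterands \ldots\ is $\kappa$-$\Fr$-Knaster.'' This inference fails. Lemma~\ref{Flinkit} applies to a FS iteration whose iterands are $\kappa$-$\Fr$-Knaster \emph{as posets in the intermediate model where they are defined}. In a simple matrix iteration the iterand at stage $\xi$, say $\Eor^{V_{\Delta(\xi),\xi}}$, is $\sigma$-uf-linked only as computed in $V_{\Delta(\xi),\xi}$; viewed from the top row $V_{\nu,\xi}$ it is a proper subposet of $\Eor^{V_{\nu,\xi}}$, and there is no reason the witnessing sets remain uf-linked there (the relevant sequences $\bar p$ and ultrafilters $D$ now live in the larger model, and the limit condition produced by Lemma~\ref{EDuflim} need not belong to the restricted poset). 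The paper states this explicitly right after Definition~\ref{Defufextmatrix}: the top-row FS iteration $\langle\Por_{\nu,\xi},\Qnm_{\nu,\xi}\rangle$ is \emph{not} in general a FS iteration of $<\kappa$-uf-linked posets. This is precisely why Theorem~\ref{mainpres} (your Theorem~\ref{uf-extKnaster}) is the main technical contribution: one must build, for each uniform $\Delta$-system, a two-dimensional system of ultrafilters $\langle\dot D_{\alpha,\xi}\rangle$ along the whole matrix (Main Lemma~\ref{main}), propagating compatibility both horizontally and vertically via Lemmas~\ref{simpleufextension} and~\ref{limufextension}. Your sketch invokes the conclusion of that theorem without supplying any replacement for its proof.

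Two smaller points. First, the inequalities are reversed in your ``hard part'' paragraph: preserving a strongly $\kappa$-$\Dbf$-unbounded family yields $\bfrak\le\kappa$, not $\bfrak\ge\kappa$; the lower bound $\kappa\le\bfrak$ comes instead from the restricted Hechler forcings producing a strongly $\kappa$-$\Dbf$-dominating family, and $\kappa\le\afrak$ then follows from $\bfrak\le\afrak$. Second, the vertical index set in the paper's construction is the ordinal $\nu+1$, not the finite list of target cardinals; the cardinals $\theta_0,\theta_1,\kappa$ enter only through the sizes of the small transitive models $\dot N_\xi$ to which $\Loc$, $\Bor$, $\Dor$ are restricted, while the height $\nu$ of the matrix is what governs $\cov(\Mwf)$ via Theorem~\ref{matsizebd}.
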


\begin{remark}\label{remupdate}
   By the time this paper was submitted, Theorem~\ref{thm7val} was a new result in the sense that no large cardinals are used to prove it, and it is another example of Cicho\'n's diagram divided into 7 values, the maximum number of different values known consistent at that moment without using large cardinals. See more examples in~\cite{FFMM,mejiavert} for 7 values.

   Since the initial submission of this paper the consistency of~(\ref{maxGKS}) and~(\ref{maxKST}) was proved in~\cite{GKMS} without using large cardinals. Although  Theorems~\ref{thm7val},~\ref{allM},~\ref{allY} and~\ref{max3} are covered by this new result,
   our methods are different, and in particular Theorem~\ref{thm7val} may be useful as a starting point for further separation results for Cicho\'n's diagram.
   %we do not lose the advantage of our methods and of Theorem~\ref{thm7val} (as a start point to separate the whole Cicho\'n's diagram).
   A more detailed discussion is provided in Section~\ref{SecDisc}.
\end{remark}

\begin{figure}
\begin{center}
  \includegraphics{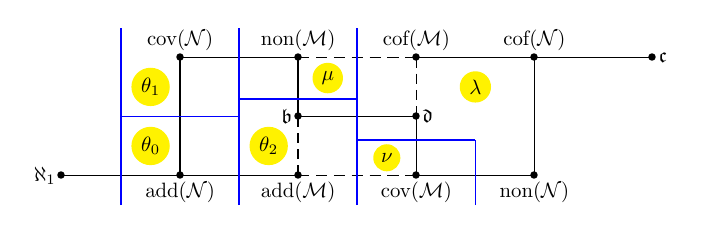}
  \caption{Seven values in Cicho\'n's diagram with the left side separated.}
  \label{Fig7val}
\end{center}
\end{figure}

The method to prove Theorem~\ref{thm7val} is a modification of the method in~\cite{GMS} to prove (\ref{leftGMS}), which is reviewed as follows. To force $\bfrak=\theta_2<\non(\Mwf)=\mu<\cov(\Mwf)=\cfrak=\lambda$, the idea is to perform a FS iteration of Suslin ccc posets restricted to small models, this to guarantee that each cardinal invariant of the left gets its desired value. Though classical techniques from~\cite{JS,Br} can be used, the main issue is to guarantee that $\bfrak$ does not get larger than desired. The reason is that restrictions of $\Eor$ are used (to increase $\non(\Mwf)$) along the iteration, and such restrictions may add dominating reals by a result of Pawlikowski~\cite{Paw-Dom}. Hence, chains of ultrafilters on $\omega$ are used to guarantee that no dominating reals are added, even more, to guarantee that the iteration is $\theta_2$-$\Fr$-Knaster. To achieve this, the following is required.

\begin{enumerate}[({P}1)]
    \item $2^{\theta_2}\geq\lambda$, so that at most $\theta_2$-sequences of ultrafilters are enough (by~\cite{TopThm}).
    \item $\theta^{\aleph_0}<\mu$ for any $\theta<\mu$.
    \item The chains of ultrafilters and the iteration are constructed simultaneously by recursion.
\end{enumerate}

Now, to prove Theorem~\ref{thm7val} we need to additionally separate $\cov(\Mwf)$ and $\dfrak$, which lie on the right side of Cicho\'n's diagram, while separating all the left side. The third author~\cite{M} has showed that Blass's and Shelah's~\cite{B1S} method of matrix iterations works to separate several cardinals on the left and right side simultaneously, which we use to produce a method of matrix iterations with matrices of ultrafilters to extend the method from~\cite{GMS}. Concretely, we introduce the concept of \emph{${<}\kappa$-uf-extendable matrix iteration} (see Definition~\ref{Defufextmatrix}) and prove the following result, which is considered the main result of this paper.

\begin{teorema}[Theorem~\ref{mainpres}]\label{uf-extKnaster}
    If $\kappa$ is an uncountable regular cardinal then every ${<}\kappa$-uf-extendable matrix iteration is $\kappa$-$\Fr$-Knaster.
\end{teorema}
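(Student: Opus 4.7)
The plan is to follow the general strategy of the Knaster-type arguments in \cite{GMS} and \cite{KST}, lifted from the one-dimensional setting to the two-dimensional setting of matrix iterations. Let $A=\{p_\xi:\xi<\kappa\}$ be a subset of size $\kappa$ of the top stage of a $<\kappa$-uf-extendable matrix iteration. The goal is to find $I\in[\kappa]^\kappa$ such that $\{p_\xi:\xi\in I\}$ is $\Fr$-linked, i.e.\ for every countable subsequence there is a condition $q$ forcing that infinitely many of its members belong to the generic.

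First, I would perform a standard cleanup. Since the iteration is FS, each $p_\xi$ has finite ``horizontal'' support in the iteration length $\pi$. Moreover, the uf-extendability gives, for each condition, an assignment of a bounded level in the vertical coordinate $\kappa^*$ of the matrix below which the condition is definable. Using the regularity of $\kappa$ together with a $\Delta$-system lemma applied to the horizontal supports, together with pigeonhole on the ``shape data'' (the common vertical level $\alpha_0<\kappa^*$, the root of the $\Delta$-system, and the isomorphism type of the sequence of names appearing at each non-root coordinate), one extracts $I\in[\kappa]^\kappa$ on which the conditions are indistinguishable outside the root.

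Now fix a countable subsequence $\langle p_{\xi_n}:n<\omega\rangle$ from $I$. The plan is to build $q$ by recursion along the common horizontal support. At each coordinate $\beta$ in this support, the iterand (restricted to the relevant vertical model) is, by hypothesis, a $<\kappa$-ultrafilter-linked Suslin-type forcing: this gives, for any ultrafilter $\Uwf$ available at level $\alpha_0$ and any countable sequence of conditions drawn from the same ``linked piece,'' a condition in the iterand that forces the set of indices $n$ for which the $n$-th piece lies in the generic to belong to the ultrafilter extension. The uf-extendability of the matrix guarantees that such an ultrafilter $\Uwf$ can be chosen at level $\alpha_0$ and extended coherently to higher vertical levels, so that when the construction is done coordinate by coordinate, the resulting condition $q$ lives at the top row and forces, by a fusion of the ultrafilter statements at each coordinate, that the set $\{n<\omega:p_{\xi_n}\in\dot G\}$ belongs to the extension of $\Uwf$ and is in particular infinite.

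The main obstacle is bookkeeping at this last step: one needs to combine the countable information from the ultrafilter-linked factors across the (common part of the) horizontal support, while simultaneously keeping track that the ultrafilter chosen at the bottom level $\alpha_0$ is the same one used throughout and extends correctly into the model where $q$ is built. This is exactly the content of ``$<\kappa$-uf-extendable'': the predetermined chain of ultrafilters chosen along the matrix ensures coherence of these extensions, so that the finitely many ultrafilter statements (one per non-root coordinate) can be amalgamated into a single assertion about $\{n:p_{\xi_n}\in\dot G\}$ belonging to $\dot\Uwf$, which is the Frechet-linkedness witness.
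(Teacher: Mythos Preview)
Your overall shape is right---thin to a uniform $\Delta$-system, then show any countable subsequence has a witness $q$---but you have misread what ``$<\kappa$-uf-extendable'' means. By Definition~\ref{Defufextmatrix} it says only that each iterand $\Qnm_\xi$ is a $\Por_{\Delta(\xi),\xi}$-name of a $<\kappa$-uf-linked poset; there is \emph{no} predetermined chain of ultrafilters built into the matrix. The sentence ``the predetermined chain of ultrafilters chosen along the matrix ensures coherence of these extensions'' is precisely the opposite of what happens: the paper emphasizes (see the discussion around (P1)--(P3) in the introduction) that the matrix is fixed first, and a fresh \emph{matrix} of ultrafilters $\langle\dot D_{\alpha,\xi}:\alpha\leq\nu,\ \xi\leq\pi\rangle$ is built afterwards, separately for each countable uniform $\Delta$-system $\bar p$.

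That construction is the real content (Main Lemma~\ref{main}), and your sketch skips it. One builds, by recursion on $\xi\leq\pi$ and then on $\alpha\leq\nu$, names $\dot D_{\alpha,\xi}$ and conditions $q_{\alpha,\xi}\in\Por^+_{\alpha,\xi}$ such that $q_{\alpha,\xi}\Vdash\dot W_{\Por_{\alpha,\xi}}(\bar p\frestr(\alpha,\xi))\in\dot D_{\alpha,\xi}$ and the $\dot D_{\alpha,\xi}$ increase in both coordinates. At successor $\xi$ with $\xi\in R^*$ one uses uf-linkedness of $\dot Q_{\xi,\zeta^*_\xi}$ to pick $q(\xi)$ forcing $\dot W_{\Qnm_\xi}(\bar p(\xi))\in\dot D_{\Delta(\xi),\xi}^+$; then one must propagate the new ultrafilter up the column via Lemma~\ref{simpleufextension}. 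At limit $\xi$ the delicate point is showing that $\dot D_{\alpha_0,\xi}\cup\bigcup_{\xi_0<\xi}\dot D_{\alpha,\xi_0}\cup\{\dot W_{\Por_{\alpha,\xi}}(\bar p\frestr(\alpha,\xi))\}$ has the finite intersection property, which requires the two-row argument of Lemma~\ref{limufextension} (working with $\Por_{0,\pi}$-names $\dot b$ and pulling back to a $\Por_{0,\xi_0}$-name $\dot b_0$). None of this is handed to you by the definition; it is where the proof lives.

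Two smaller points: the $\Delta$-system refinement you need is Definition~\ref{Defunifdeltasys}(3)---uniformity concerns only the \emph{root} coordinates (same linked piece $\dot Q_{\xi,\zeta^*_\xi}$ for all $n$), not any ``isomorphism type at non-root coordinates''; and the recursion for $q$ runs over all $\xi\leq\pi$ (with $\dom q_{\alpha,\xi}=R^*\cap\xi$), not merely along the common support, since the ultrafilter matrix must be maintained at every stage.
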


In order to define this type of matrix iterations, we required to generalize the notion of $\Fr$-linked as follows. When $F$ is a free filter on $\omega$, $\Por$ is a poset and $Q\subseteq\Por$, we say that $Q$ is \emph{$F$-linked} if, for any sequence $\la p_n:n<\omega\ra$ of members of $Q$, there is some $q\in \Por$ forcing that $\{n<\omega:p_n\in\dot{G}\}$ is $F$-positive (note that this is the same as $\Fr$-linked when $F$ is the Frechet filter). In the natural way, the notions \emph{$\mu$-$F$-linked} and \emph{$\theta$-$F$-Knaster} are defined for posets. We also say that $Q\subseteq\Por$ is \emph{uf-linked} if $Q$ is $F$-linked for every free filter $F$ (equivalently, for every non-principal ultrafilter); say that $\Por$ is \emph{$\mu$-uf-linked} if it is the union of ${\leq}\mu$-many uf-linked subsets; and $\Por$ is \emph{$\theta$-uf-Knaster} if every subset of $\Por$ of size $\theta$ contains a uf-linked subset of the same size.\footnote{In general, these notions are not equivalent to ``$\mu$-$F$-linked (resp. $\theta$-$F$-Knaster) for every free filter $F$ on $\omega$".}

A curious fact proved in~\cite[Lemma~5.5]{mejiavert} (see Lemma~\ref{quasiuf}) is that, for ccc posets, the notions $\Fr$-linked and uf-linked are equivalent, which means that the notions above are not generalizations in the context of ccc. However, the notion $\mu$-uf-linked (for $\mu<\theta_2$) is implicitly used to construct the chains of ultrafilters in~\cite{GMS}, and it is also necessary to construct matrices of ultrafilters along an uf-extendable matrix iteration. For short, a ${<}\kappa$-uf extendable matrix iteration produces a FS iteration $\la\Por_\alpha,\Qor_\alpha:\alpha<\pi\ra$ (at the top of the matrix) of $\kappa$-cc posets where each iterand $\Qnm_\alpha$ is $\mu_\alpha$-uf-linked \emph{with respect} to a complete subposet of $\Por_\alpha$ (lying below in the matrix) for some $\mu_\alpha<\kappa$ (but not necessarily $\mu_\alpha$-uf-linked with respect to $\Por_\alpha$).

The most surprising thing about our method is that it does not rely on conditions like (P1)-(P3), e.g., the matrix iteration can be defined before considering any matrix of ultrafilters, and no restriction on the amount of matrices of ultrafilters is required. For each quite uniform countable $\Delta$-system $\la p_n:n<\omega\ra$ we can construct a matrix of ultrafilters along the matrix iteration and a condition $q$ forcing that $\{n<\omega:p_n\in\dot{G}\}$ is infinite, which will be enough to guarantee that the construction is $\kappa$-$\Fr$-Knaster.

The following constellation can also be proved by our method.

\begin{teorema}[Theorem~\ref{Apl2}]\label{allM}
    If $\theta_0\leq\theta_1\leq\mu\leq\nu$ are uncountable regular cardinals and $\lambda$ is a cardinal such that $\lambda^{{<}\theta_1}=\lambda$, then there is a ccc poset that forces (see Figure~\ref{FigM})
    \begin{multline*}
        \add(\Nwf)=\theta_0\leq\bfrak=\afrak=\theta_1\leq\cov(\Nwf)=\non(\Mwf)=\mu\\
        \leq\cov(\Mwf)=\non(\Nwf)=\nu\leq\dfrak=\cfrak=\lambda.
    \end{multline*}
\end{teorema}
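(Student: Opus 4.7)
The plan is to imitate the proof of Theorem \ref{thm7val}: build a $<\theta_1$-uf-extendable matrix iteration and then apply Theorem \ref{uf-extKnaster} to conclude that the final poset is $\theta_1$-$\Fr$-Knaster. The preservation results (Lemma \ref{FrKnasterpresunb} for strong unbounded families and Theorem \ref{Kmad} for $\theta_1$-strong-$\Md$ mad families) will then deliver the two non-trivial equalities $\bfrak=\afrak=\theta_1$.

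First I would fix, in the ground model, a Hechler generic mad family of size $\theta_1$ via $\Hor_{\theta_1}$, which is a $\theta_1$-strong-$\Md$ family by Lemma \ref{HechlerMad}. Using $\lambda^{<\theta_1}=\lambda$, I would set up a matrix iteration of width $\pi$ with $|\pi|=\lambda$, whose height is indexed to host horizontal witnesses for each target invariant. Bookkeeping schedules the iterands along the appropriate levels: an amoeba-type poset to guarantee $\add(\Nwf)\geq\theta_0$; random-forcing restrictions cofinally at the $\mu$-level to push $\cov(\Nwf)\geq\mu$ and simultaneously $\non(\Mwf)\geq\mu$; and Cohen-type forcings cofinally at the $\nu$-level to push $\cov(\Mwf)\geq\nu$ and simultaneously $\non(\Nwf)\geq\nu$. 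Each iterand is arranged to be $\mu_\alpha$-uf-extendable with respect to a complete subposet lower in the matrix for some $\mu_\alpha<\theta_1$, so that the matrix is $<\theta_1$-uf-extendable as a whole.

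Next, Theorem \ref{uf-extKnaster} yields that the final poset is $\theta_1$-$\Fr$-Knaster. Lemma \ref{FrKnasterpresunb} then gives $\bfrak\geq\theta_1$, and Theorem \ref{Kmad} preserves the Hechler mad family, whence $\afrak\leq\theta_1$. The reverse $\bfrak\leq\theta_1$ is realized by the natural scale of length $\theta_1$ appearing from the bookkeeping at the $\theta_1$-level, and $\bfrak\leq\afrak$ closes $\afrak=\theta_1$. The remaining values come from standard arguments in the style of \cite{GMS,M}: $\cfrak=\lambda$ by cardinality (using $\lambda^{<\theta_1}=\lambda$), $\dfrak=\lambda$ from the tail of dominating reals, $\add(\Nwf)\leq\theta_0$ by destroying ground-model witnesses via the amoeba-type schedule, and the upper bounds for $\cov(\Nwf)$, $\non(\Mwf)$, $\cov(\Mwf)$, $\non(\Nwf)$ from the levels at which their duals are controlled in the matrix.

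The main obstacle will be pinning $\bfrak$ at $\theta_1$ in the presence of the small-model-restricted iterands, which notoriously can add dominating reals and so threaten to push $\bfrak$ beyond $\theta_1$. Theorem \ref{uf-extKnaster} is precisely designed to overcome this: by placing those iterands above ultrafilter-carrying complete subposets of the matrix and exploiting how uf-extendability transfers through matrix completeness, the whole iteration becomes $\theta_1$-$\Fr$-Knaster without any cardinal arithmetic hypothesis like $2^{\theta_0}\geq\lambda$ that was needed in the earlier approach of \cite{GMS}. A secondary technical issue is arranging the paired equalities $\cov(\Nwf)=\non(\Mwf)=\mu$ and $\cov(\Mwf)=\non(\Nwf)=\nu$ simultaneously; this amounts to placing the correct cofinal sequences of random-type and Cohen-type iterands at the correct rows so that each pair of dual invariants is squeezed from both sides at the right level.
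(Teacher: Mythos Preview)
Your overall strategy matches the paper's: force with $\Hor_{\theta_1}\ast\Cor_\lambda$ first, then build a $<\theta_1$-uf-extendable matrix iteration (index set $\nu+1$, length $\lambda\nu\mu$) and invoke Theorem~\ref{uf-extKnaster} together with Theorems~\ref{FrKnasterpresunb} and~\ref{FrKnastermad}. The paper's iterands are: full random forcing $\Bor^{V_{\Delta(\xi),\xi}}$ at the distinguished coordinates (playing the role that $\Eor$ played in Theorem~\ref{Apl1}), and elsewhere two kinds of small posets --- arbitrary ccc posets of size $<\theta_0$ (to force $\mathrm{MA}_{<\theta_0}$, hence $\add(\Nwf)\geq\theta_0$) and $\sigma$-centered subposets of Hechler forcing of size $<\theta_1$ (to force $\bfrak\geq\theta_1$).

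There are, however, real gaps. First, several inequality directions are reversed. Preserving a strongly $\theta_1$-$\Dbf$-unbounded family via Theorem~\ref{FrKnasterpresunb} yields $\bfrak\leq\theta_1$, not $\geq$; the amoeba-type schedule gives the \emph{lower} bound $\add(\Nwf)\geq\theta_0$, while $\add(\Nwf)\leq\theta_0$ follows from all iterands being $\theta_0$-$\Lc^*$-good (Theorem~\ref{sizeforbd}); and $\dfrak\geq\lambda$ is not obtained from ``a tail of dominating reals'' but from preservation of the strongly $\kappa$-$\Dbf$-unbounded families (regular $\kappa\in[\theta_1,\lambda]$) added by the preliminary $\Cor_\lambda$, which your outline omits entirely.

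Second, and more importantly, you provide no mechanism for $\bfrak\geq\theta_1$. A ``natural scale of length $\theta_1$'' does not appear for free; the paper obtains it by book-keeping all $\sigma$-centered subposets of Hechler forcing of size $<\theta_1$ that live in the intermediate models and iterating them (such posets are trivially $<\theta_1$-uf-linked by size). Without these iterands the final model could well satisfy $\bfrak=\aleph_1$. The random iterands, via the matrix structure and Theorem~\ref{matsizebd}, give a strongly $\mu$-$\Cn$-dominating family of size $\nu$, which delivers both $\cov(\Nwf)\geq\mu$ and $\non(\Nwf)\leq\nu$; the paired equalities then follow from the ZFC inequalities $\cov(\Nwf)\leq\non(\Mwf)$ and $\cov(\Mwf)\leq\non(\Nwf)$ combined with the Cohen reals of the matrix, exactly as in Theorem~\ref{Apl1}.
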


\begin{figure}
\begin{center}
  \includegraphics{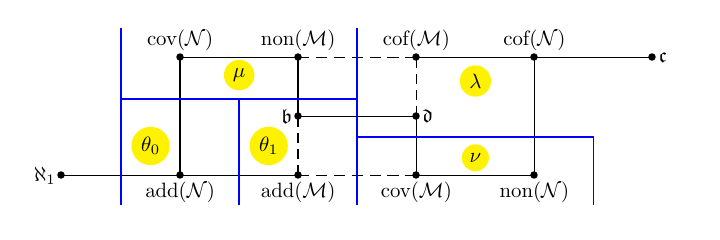}
  \caption{Separation of the cardinals associated with $\Mwf$ and $\Nwf$.}
  \label{FigM}
\end{center}
\end{figure}

Theorems~\ref{thm7val} and~\ref{allM} show (without using large cardinals) that the four cardinals $\add(\Mwf)$, $\cov(\Mwf)$, $\non(\Mwf)$ and $\cof(\Mwf)$ can consistently be pairwise different, which solves~\cite[Question 6.2]{CM}. In the latter theorem, the cardinals associated with $\Nwf$ are also forced to be pairwise different, though the consistency for $\Nwf$ alone was already proved in~\cite{M}.

As a consequence of Theorem~\ref{allM}, we are also able to solve a consistency problem related to \emph{Yorioka ideals}. Yorioka~\cite{Yorioka} defined $\sigma$-ideals  $\Ical_f$ parametrized by increasing functions $f\in\omega^\omega$ (see details in Definition~\ref{DefYorio}), to show that no inequality between $\cof(\SNcal)$ and $\cfrak:=2^\omega$ can be decided in $\thzfc$, where  $\mathcal{SN}$ is the $\sigma$-ideal of \emph{strong measure zero subsets of $2^\omega$}. The second and third author~\cite{CM} produced a ccc poset, via a matrix iteration, that forces the four cardinals $\add(\Iwf_f)$, $\cov(\Iwf_f)$, $\non(\Iwf_f)$ and $\cof(\Iwf_f)$ to be pairwise different for any $f$ above some fixed $f^*$. Now, thanks to Theorem~\ref{allM}, we can prove this consistency result \emph{for any $f$}, which solves~\cite[Question 6.1]{CM}.

\begin{teorema}[Corollary~\ref{Apl3}]\label{allY}
    There is a ccc poset that forces $\add(\Iwf_f)<\cov(\Iwf_f)<\non(\Iwf_f)<\cof(\Iwf_f)$ for any increasing $f\in\omega^\omega$.
\end{teorema}

As the final result, Boolean ultrapowers can be applied to the poset constructed for Theorem~\ref{thm7val} to weaken the large cardinal hypothesis of (\ref{maxGKS}).

\begin{teorema}[Theorem~\ref{cichonmax3}]\label{max3}
    Assuming three strongly compact cardinals, there is a ccc poset that forces
    \[
    \aleph_1<\add(\Nwf)<\cov(\Nwf)<\bfrak<\non(\Mwf)<\cov(\Mwf)<\dfrak<\non(\Nwf)<\cof(\Nwf)<\cfrak.
    \]
\end{teorema}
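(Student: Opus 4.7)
The plan is to apply the Boolean-ultrapower strategy of Goldstern--Kellner--Shelah \cite{GKS} and Kellner--Shelah--T\v{a}nasie \cite{KST} to the poset produced by Theorem \ref{thm7val}, rather than to the 6-value posets of \cite{GMS} or \cite{KST}. Since Theorem \ref{thm7val} already yields a model separating seven values in Cicho\'n's diagram (in particular, the gap $\cov(\Mwf)<\dfrak$ is already open), only three additional separations remain before we reach the maximum of 10 values, namely $\dfrak<\non(\Nwf)$, $\non(\Nwf)<\cof(\Nwf)$, and $\cof(\Nwf)<\cfrak$. Each of these three separations will be produced by exactly one Boolean ultrapower, using one of the three available strongly compact cardinals. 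This accounts cleanly for the reduction from four (as in \cite{GKS}) to three strongly compact cardinals.

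Concretely, fix three strongly compact cardinals $\kappa_1<\kappa_2<\kappa_3$ in the ground model. Select regular cardinals $\theta_0\leq\theta_1\leq\kappa\leq\mu\leq\nu$ and $\lambda$ with $\lambda^{<\kappa}=\lambda$, all strictly below $\kappa_1$, and apply Theorem \ref{thm7val} to obtain a ccc poset $\Pbb_0$ forcing
\[
\add(\Nwf)=\theta_0<\cov(\Nwf)=\theta_1<\bfrak=\kappa<\non(\Mwf)=\mu<\cov(\Mwf)=\nu<\dfrak=\non(\Nwf)=\cof(\Nwf)=\cfrak=\lambda.
\]
Since $\Pbb_0$ is built from a $<\kappa$-uf-extendable matrix iteration (via Theorem \ref{uf-extKnaster}), its conditions already carry the ultrafilter-limit structure demanded by the Boolean-ultrapower machinery developed in \cite{KTT}.

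Then perform three successive Boolean ultrapowers. Using strong compactness of $\kappa_i$ to obtain a suitable fine ultrafilter on an appropriate completion, produce Boolean ultrapowers $B_i$ such that $\Pbb_i:=B_i(\Pbb_{i-1})$ remains ccc, completely embeds $\Pbb_{i-1}$, and raises the targeted invariant to a preassigned regular value $\lambda_i>\lambda_{i-1}$ (with $\lambda_0=\lambda$). The ultrafilters are chosen so that $B_1$ blows up $\non(\Nwf),\cof(\Nwf),\cfrak$ from $\lambda$ to $\lambda_1$ while preserving $\dfrak=\lambda$; then $B_2$ blows up $\cof(\Nwf),\cfrak$ from $\lambda_1$ to $\lambda_2$ while preserving $\non(\Nwf)=\lambda_1$; and finally $B_3$ raises $\cfrak$ from $\lambda_2$ to $\lambda_3$ while preserving $\cof(\Nwf)=\lambda_2$. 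Preservation of every lower invariant (everything from $\add(\Nwf)$ through $\dfrak$) follows from the Tukey-invariance analysis in \cite[Section 4]{GKS} once the ultrafilters are chosen fine and uniform at the appropriate levels.

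The main obstacle is the verification that the three successive ultrapowers preserve the seven lower cardinal values established by $\Pbb_0$ while raising exactly the three intended targets. This is a direct adaptation of the Tukey-invariance calculations carried out in \cite{GKS, KST, KTT}, and it relies crucially on the fact that the matrix iteration underlying $\Pbb_0$ is $\kappa$-$\Fr$-Knaster (by Theorem \ref{uf-extKnaster}), so that its images under ultrapowers by $\kappa_i$-complete fine ultrafilters with $\kappa_i$ strongly compact remain ccc. Because Theorem \ref{thm7val} delivers a starting model that is already one separation closer to the 10-value target than the 6-value models used in \cite{GKS} or \cite{KST}, the construction economizes exactly one Boolean ultrapower, and therefore exactly one strongly compact cardinal, yielding the advertised bound of three.
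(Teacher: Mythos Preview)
Your high-level idea --- start from the seven-value poset of Theorem \ref{thm7val} and apply three Boolean ultrapowers --- matches the paper's approach. However, your choice of cardinal arithmetic is fatal: you place \emph{all} of $\theta_0,\theta_1,\kappa,\mu,\nu,\lambda$ strictly below the smallest strongly compact $\kappa_1$ and then take ultrapowers at $\kappa_1<\kappa_2<\kappa_3$ in increasing order. Since each Boolean ultrapower $j$ has critical point the corresponding strongly compact, $j$ fixes every cardinal below it; in particular $j_1(\lambda)=\lambda$ and $j_1(\Pbb_0)$ is essentially $\Pbb_0$ (Lemma \ref{BUP}(c)). Nothing gets ``blown up''. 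There is no way to choose the ultrafilter so that $j_1$ raises $\non(\Nwf)$ while fixing $\dfrak$ when both equal the same cardinal $\lambda<\kappa_1$: the mechanism of Lemma \ref{elemembpres} separates invariants by comparing the \emph{directedness degree} $\theta$ of the relevant strongly dominating family with the critical point $\kappa$, not by tuning the ultrafilter.

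The paper's arrangement is the opposite of yours: the three strongly compacts are \emph{interleaved} with the left-side targets as $\kappa_9<\lambda_1<\kappa_8<\lambda_2<\kappa_7<\lambda_3\leq\cdots\leq\lambda_6$, and the ultrapowers are applied in \emph{decreasing} order $j_7,j_8,j_9$. The point is that after $j_7$ (critical point $\kappa_7\in(\lambda_2,\lambda_3)$), the strongly $\lambda_3$-$\Dbf$-dominating family keeps its directed set of size $\lambda_6$ by Lemma \ref{elemembpres}(c)(ii) (since $\lambda_3>\kappa_7$), pinning $\dfrak\leq\lambda_6$; meanwhile the strongly $\lambda_2$-$\Cn$-dominating family has $\lambda_2<\kappa_7$, so its directed set grows to $\lambda_7$ by (c)(i). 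The lower bound $\non(\Nwf)\geq\lambda_7$ comes from the strongly $\kappa_7$-$\Cn$-unbounded family (which exists because $\kappa_7\in[\lambda_2,\lambda_6]$), whose image under $j_7$ is strongly $\cf(j_7(\kappa_7))=\lambda_7$-$\Cn$-unbounded. The next two ultrapowers repeat this one step lower each time. Also, the $\kappa$-$\Fr$-Knaster property of $\Pbb_0$ plays no role in ccc preservation under the ultrapower; that is handled directly by $M$ being closed under $\omega$-sequences (Lemma \ref{BUP}(a), Lemma \ref{elemembpres}(a)).
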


Result (\ref{maxGKS}) require further hypotheses, for example, GCH is assumed in the ground model, the cardinals on the left side of Cicho\'n's diagram cannot be successors of cardinals of countable cofinality, and the value for $\bfrak$ should be a successor. These assumptions can be omitted for Theorem~\ref{max3} except of GCH that can be weakened substantially.

This work is structured as follows.
\begin{enumerate}[\bfseries{Section }1.]
\setcounter{enumi}{1}
    \item We review our forcing notation, matrix iterations, and the classical preservation results for FS iterations and matrix iterations.
    \item We introduce Filter-linkedness, present examples of $\mu$-uf-posets and show Theorem~\ref{Kmad}. In addition, we show how $\mu$-$F$-linkedness and $\theta$-$F$-Knaster behaves in FS iterations and FS products.
    \item We define the notion of ${<}\kappa$-uf-extendable matrix iterations and prove Theorem~\ref{uf-extKnaster}.
    \item We show applications of Theorem~\ref{uf-extKnaster}, concretely, we prove Theorems~\ref{thm7val},~\ref{allM},~\ref{allY}, and~\ref{max3}.
    \item We discuss some open questions and recent updates related to this research.
\end{enumerate}

\section{Preliminaries}\label{SecPre}

We first review some notation and the forcing posets we are going to use throughout this paper. Denote $\Cor:=\omega^{<\omega}$ (Cohen forcing) ordered by $\supseteq$ and, for any set  $Z$, $\Cor_Z$ denotes the FS product of $\Cor$ along $Z$.

\begin{definition}[{\cite{BrF,Hechlermad}}]\label{DefHz}
    Let $Z$ be a set. \emph{Hechler's poset $\Hor_Z$ for adding an a.d.\ family (indexed by $Z$)} is defined as the poset whose conditions are of the form $p:F_p\times n_p\to2$ with $F_p\in[Z]^{<\aleph_0}$ and $n_p<\omega$ (demand $n_p=0$ iff $F_p=\emptyset$), ordered by $q\leq p$ iff $p\subseteq q$ and $|q^{-1}[\{1\}]\cap(F_p\times\{i\})|\leq 1$ for every $i\in[n_p,n_q)$.
\end{definition}

 This poset has the Knaster property, it even has precaliber $\aleph_1$, and the a.d.\ family it adds is maximal when $Z$ is uncountable.  For any $Z\subseteq Z'$, $\Hor_Z\lessdot\Hor_{Z'}$; and both $\Hor_Z$ and $\Hor_{Z'}$ are isomorphic whenever $|Z|=|Z'|$. The forcing $H_Z$ is forcing equivalent to $\Cor$ when $Z$ is countable and non-empty, and to $\Cor_{\omega_1}$ when $|Z|=\aleph_1$.\footnote{It is not hard to see that $\Hor_{\omega_1}$ is the FS iteration of the quotients $\Hor_{\alpha+1}/\Hor_\alpha$ for $\alpha<\omega_1$. Since these quotients are countable, $\Hor_{\omega_1}$ is equivalent to the FS support iteration of length $\omega_1$ of $\Cor$, which is $\Cor_{\omega_1}$.}

Though random forcing is typically known as the cBa (complete Boolean algebra) $\Bwf(2^\omega)/\Nwf$ (where $\Bwf(2^\omega)$ denotes the family of Borel subsets of $2^\omega$), we often use the following equivalent poset.

\begin{definition}\label{Defrandom}
   %\begin{enumerate}[(1)]
       \emph{Random forcing} $\Bor$ is defined as the set of trees $T\subseteq2^{<\omega}$ such that $\lambda^*([T])>0$ where $\lambda^*$ denotes the Lebesgue measure on $2^\omega$. The order is $\subseteq$.
       %\item\label{defrandomlink}

       For $(s,m)\in2^{<\omega}\times\omega$ set
       \[\Bor(s,m):=\{T\in\Bor: [T]\subseteq[s]\text{\ and }2^{|s|}\cdot\lambda^*([T])\geq 1-2^{-10-m}\}.\]
        %  A sequence $\bar{p}=\la p_n:n<\omega\ra$ is \emph{$(s,m)$-uniform} if each $p_n\in\Bor(s,m)$.
       %\item\label{defirandomunif} A sequence $\bar{p}=\la p_n:n<\omega\ra$ is \emph{uniform} if it is uniform for some $(s,m)\in2^{<\omega}\times\omega$.
%\end{enumerate}
\end{definition}

Note that, for fixed $m<\omega$, $\bigcup_{s\in 2^{{<}\omega}}\Bor(s,m)$ is dense in $\Bor$.

\begin{notation}\label{DefSlalom}
   \begin{enumerate}[(1)]
       \item A \emph{slalom} is a function $\varphi:\omega\to[\omega]^{<\omega}$. For any function $x$ with domain $\omega$, $x\in^*\varphi$ denotes $\forall^\infty i<\omega(x(i)\in\varphi(i))$, which is read \emph{$\varphi$ localizes $x$}.
       \item For a function $b$ with domain $\omega$ and $h\in\omega^\omega$, denote $\seq_{<\omega}(b):=\bigcup_{n<\omega}\prod_{i<n}b(i)$, $\prod b:=\prod_{i<\omega}b(i)$ and $\Swf(b,h):=\prod_{i<\omega}[b(i)]^{\leq h(i)}$.
       \item For any set $A$, $\id_A$ denotes the identity function on $A$. Denote $\id:=\id_\omega$.
       \item Operations and relations between functions from $\omega$ into the ordinals are interpreted pointwise. For example, if $b$ and $c$ are such functions, $b\cdot c$ denotes the pairwise ordinal product of both functions, and $b<c$ indicates that $b(i)<c(i)$ for any $i<\omega$. Also, constant objects may be interpreted as constant functions with domain $\omega$, for instance, the $\omega$ in $\Swf(\omega,h)$ is understood as the constant function $\omega$.
       \item For $x,y:\omega\to\mathrm{On}$, $x\leq^* y$ denotes $\forall^\infty i<\omega(x(i)\leq y(i))$, which is read \emph{$x$ is dominated by $y$}. Likewise, $x<^* y$ is defined.
       \item Say that two functions $x$ and $y$ with domain $\omega$ are \emph{eventually different}, denoted by $x\neq^* y$, if $\forall^\infty i<\omega(x(i)\neq y(i))$.
   \end{enumerate}
\end{notation}

\begin{definition}\label{DefLocforcing}
   \emph{Localization forcing} is the poset
   \[\Loc:=\{\varphi\in\Swf(\omega,\id):\exists m<\omega\forall i<\omega(|\varphi(i)|\leq m)\}\]
   ordered by $\varphi'\leq\varphi$ iff $\varphi(i)\subseteq\varphi'(i)$ for every $i<\omega$. Recall that this poset is $\sigma$-linked and that it adds a slalom in $\Scal(\omega,\id)$ that localizes all the ground model reals in $\omega^\omega$.
\end{definition}

The following is a generalization of the standard ccc poset that adds an eventually different real (see e.g.~\cite{KO,CM}).

\begin{definition}[ED forcing]\label{DefEDforcing}
  Fix $b:\omega\to\omega+1\menos\{0\}$ and $h\in\omega^\omega$ such that $\lim_{i\to+\infty}\frac{h(i)}{b(i)}=0$ (when $b(i)=\omega$, interpret $\frac{h(i)}{b(i)}$ as $0$).
  %\begin{itemize}
      %\item[(1)]
      Define the \emph{$(b,h)$-ED (eventually different real) forcing} $\Eor^h_b$ as the poset whose conditions are of the form $p=(s,\varphi)$ such that, for some $m:=m_{p}<\omega$, %(\footnote{An equivalent poset is obtained by replacing $|s|$ by some $m<\omega$ in conditions (i) and (ii).})
         \begin{enumerate}[(i)]
            \item $s\in\seq_{<\omega}(b)$, $\varphi\in\Swf(b,m\cdot h)$, and
            \item $m\cdot h(i)<b(i)$ for every $i\geq|s|$,
         \end{enumerate}
      ordered by $(t,\psi)\leq(s,\varphi)$ iff $s\subseteq t$, $\forall i<\omega(\varphi(i)\subseteq\psi(i))$ and $t(i)\notin\varphi(i)$ for all $i\in|t|\menos|s|$.

      Put $\Eor^h_b(s,m):=\{(t,\varphi)\in\Eor^h_b:t=s\text{\ and }m_{(t,\varphi)}\leq m\}$ for $s\in\seq_{<\omega}(b)$ and $m<\omega$.

      Denote $\Eor_b:=\Eor^1_b$, $\Eor:=\Eor_\omega$, $\Eor_b(s,m):=\Eor^1_b(s,m)$, and $\Eor(s,m):=\Eor_\omega(s,m)$.
      %\item[(2)] Let $s\in\seq_{<\omega}(b)$ and $m<\omega$. A sequence $\bar{p}=\la p_n:n<\omega\ra$ of conditions in $\Eor^h_b$ is \emph{$(s,m)$-uniform} if every $p_n$ belongs to $\Eor^h_b(s,m)$. %Denote by $\Ewf^{b,h}_{s,m}$ the collection of $s$-uniform sequences in $\Eor^h_b$. Put $\Ewf^b_{s,m}:=\Ewf^{b,1}_{s,m}$ and $\Ewf_{s,m}:=\Ewf^\omega_{s,m}$.
      %\item[(3)] A sequence $\bar{p}=\la p_n:n<\omega\ra$ of conditions in $\Eor^h_b$ is \emph{uniform} if it is $(s,m)$-uniform for some $s\in\seq_{<\omega}(b)$ and $m<\omega$. %Denote by $\Eun^{b,h}:=\bigcup_{(s,m)\in\seq_{<\omega}(b)\times\omega}\Ewf^{b,h}_s$ the collection of uniform sequences in $\Eor^h_b$. Put $\Eun^b:=\Eun^{b,1}$ and $\Eun:=\Eun^\omega$.
  %\end{itemize}

     It is not hard to see that $\Eor_b^h$ is $\sigma$-linked. Even more, whenever $b\geq^*\omega$, $\Eor_b^h$ is $\sigma$-centered.

     When $h\geq^*1$, $\Eor_b^h$ adds an eventually different real\footnote{More generally, $\Eor_b^h$ adds a real $e\in\prod b$ such that $\forall^\infty_{i<\omega}(e(i)\notin\varphi(i))$ for any $\varphi\in\Scal(b,h)$ in the ground model.} in $\prod b$.
\end{definition}

One of our main results deals with the following notion.

\begin{definition}[Yorioka {\cite{Yorioka}}]\label{DefYorio}
   For $\sigma \in (2^{<\omega})^{\omega}$ define
\[[\sigma]_\infty:=\{x \in 2^{\omega}:\exists^{\infty}{n < \omega}^{}(\sigma(n) \subseteq x)\}=\bigcap_{n<\omega} \bigcup_{m \geqslant n}[\sigma(m)]\]
and $\hgt_{\sigma}\in\omega^{\omega}$ by $\hgt_{\sigma}(i):=|\sigma(i)|$ for each $i<\omega$.

Define the relation $\ll$ on $\omega^\omega$ by
$f\ll g$  iff $\forall{k<\omega}\forall^{\infty}{n<\omega}(f(n^k)\leq g(n))$.

For $g,f \in \omega^\omega$ define the families
    \[\mathcal{J}_{g}:=\{X\subseteq 2^{\omega}:\exists{\sigma \in (2^{<\omega})^{\omega}}(X \subseteq [\sigma]_\infty\text{\ and }\hgt_{\sigma}=g )\}\textrm{\ and }
    \mathcal{I}_{f}:=\bigcup_{g\gg f}\mathcal{J}_{g}.\]
 Any family of the form $\mathcal{I}_{f}$ with $f$ increasing is called a \textit{Yorioka ideal}.
\end{definition}

The following results show the relationship between the cardinal invariants associated with Yorioka ideals and the cardinals in Cicho\'n's diagram.
This is used in Section~\ref{SecAppl} to prove  Corollary~\ref{Apl3}. See Figure~\ref{FigI_f} on page~\pageref{FigI_f} for a diagram of some of these inequalities.

\begin{theorem}\label{YorioCichdiagram}
Let $f \in \omega^\omega$ be a strictly increasing function. Then
\begin{enumerate}[(a)]
\item \emph{(Yorioka~\cite{Yorioka})} $\mathcal{I}_{f}$ is a $\sigma$-ideal and $\mathcal{SN}\subseteq \mathcal{I}_{f}\subseteq\mathcal{N}$, so $\cov(\mathcal{N})\leq \cov(\mathcal{I}_f)\leq \cov(\mathcal{SN})$ and $\non(\mathcal{SN})\leq \non(\mathcal{I}_f)\leq \non(\mathcal{N})$.
\item \emph{(Kamo, see e.g.~\cite[Cor.~3.14]{CM})} $\add(\Ncal)\leq\add(\Ical_f)$ and $\cof(\Ical_f)\leq\cof(\Ncal)$.
\item \emph{(Kamo and Osuga~\cite{kamo-osuga})} $\add(\mathcal{I}_f)\leq \bfrak$ and $\dfrak\leq \cof(\mathcal{I}_f)$.
\item \emph{(Osuga~\cite{O1}, see also~\cite[Cor.~3.22]{CM})} $\cov(\Ical_f)\leq \non(\Mcal)$ and $\cov(\Mcal)\leq \non(\mathcal{I}_f)$.
\end{enumerate}
\end{theorem}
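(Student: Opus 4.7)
Since the four items come from independent sources and use quite different techniques, my plan is to treat each separately, arguing (a) directly from definitions and sketching the Tukey-type reductions that yield (b)--(d).

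For (a), closure of $\Ical_f$ under countable unions is proved by interleaving the sequences $\sigma_n$ witnessing $X_n\in\Ical_f$ into a single sequence $\sigma$, spaced sparsely enough that $\hgt_\sigma\gg f$ is preserved; closure under subsets is immediate. The inclusion $\SNcal\subseteq\Ical_f$ follows from applying the definition of strong measure zero to any sequence of radii $(2^{-g(i)})_{i<\omega}$ with $g\gg f$, turning the resulting cover into a code $\sigma$ with $\hgt_\sigma=g$ and $X\subseteq[\sigma]_\infty$. For $\Ical_f\subseteq\Ncal$, one uses that a strictly increasing $f\in\omega^\omega$ satisfies $f(n)\geq n$, hence $\sum_n 2^{-g(n)}<\infty$ for any $g\gg f$, so Borel--Cantelli gives measure zero to $[\sigma]_\infty$. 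The inequalities for $\cov$ and $\non$ then follow formally from the inclusions $\SNcal\subseteq\Ical_f\subseteq\Ncal$.

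For (b), (c), and (d), I would adapt the proofs of Kamo, Kamo--Osuga, and Osuga. Each is a Tukey-style reduction between the relational systems presenting the relevant ideals. For (b), one encodes null covers via slaloms (using the classical slalom-localization presentation of $\add(\Ncal)$ and $\cof(\Ncal)$) to produce families in $\Ical_f$ of the same size exhibiting the corresponding additivity and cofinality bounds. For (c), a dominating function $g\gg f$ gives rise to a ``universal'' $\sigma$ of height $g$, so dominating families in $\omega^\omega$ translate into cofinal families in $\Ical_f$; dually, an unbounded family in $\omega^\omega$ is extracted from any witness to $\add(\Ical_f)$ by reading off the heights of its members. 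For (d), the argument uses Bartoszy\'nski's characterizations of $\cov(\Mcal)$ and $\non(\Mcal)$ in terms of eventually different or chopped-real relations, together with the fact that a real sufficiently ``eventually different'' from a generic $\sigma$ avoids $[\sigma]_\infty$.

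The main obstacle lies in (c) and (d): the inequalities $\add(\Ical_f)\leq\bfrak$ and $\cov(\Ical_f)\leq\non(\Mcal)$ bridge two genuinely different combinatorial structures, and the constructions require careful tuning of the candidate heights so that they sit strictly above $f$ in the $\gg$-order while still supporting the required reductions. For the purposes of the present paper I would quote (b)--(d) verbatim from the cited references, since only their statements are needed in the derivation of Corollary \ref{Apl3}.
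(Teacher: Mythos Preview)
The paper gives no proof of this theorem at all: it is stated as a compilation of results from the literature, with attributions to Yorioka, Kamo, Kamo--Osuga, and Osuga, and the text moves directly to the next subsection. Your final remark---that for the purposes of this paper one should simply quote (b)--(d) from the cited references since only the statements are used in Corollary~\ref{Apl3}---is exactly what the paper does, and in fact the paper does the same for (a). Your sketches of the arguments are reasonable and more than the paper provides, but they are not needed here.
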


\subsection{Simple matrix iterations}

We review some usual facts about matrix iterations in the context of what we call \emph{simple matrix iterations}. In this type of matrix iterations only restricted generic reals are added, and preservation properties behave very nicely.

\begin{definition}\label{DefCompM}
Let $M$ be a transitive model of $\thzfc$ (or of a finite fragment of it). Given two posets $\Por\in M$  and $\Qor$ (not necessarily in $M$), say that \textit{$\Por$ is a \textit{complete subposet of $\Qor$} with respect to $M$}, denoted by $\Por \lessdot_{M}\Qor$, if $\Por$ is a subposet of $\Qor$ and every maximal antichain in $\Por$ that belongs to $M$ is also a maximal antichain in $\Qor$.
\end{definition}

In this case, if $N$ is another transitive model of $\thzfc$ such that $N\supseteq M$ and $\Qor\in N$, then $\Por\lessdot_{M} \Qor$ implies that, whenever $G$ is $\Qor$-generic over $N$, $G \cap \Por$ is $\Por$-generic over $M$ and $M[G \cap \Por]\subseteq N[G]$ (see Figure~\ref{Figsinglestep}). When $\Por\in M$ it is clear that $\Por\lessdot_M\Por$.% and, whenever $\Sor$ is either a Suslin ccc poset or a Random algebra coded in $V$, $\Sor^M\lessdot_M\Sor^N$.

\begin{figure}
\begin{center}
  \includegraphics{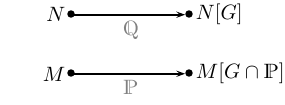}
  \caption{Generic extensions of pairs of posets ordered like $\Por\lessdot_M\Qor$.}
  \label{Figsinglestep}
\end{center}
\end{figure}

\begin{definition}[Blass and Shelah {\cite{B1S}}]\label{Defmatit}
A \textit{matrix iteration} $\mbf$ consists of
\begin{enumerate}[(I)]
  \item a well order $I^\mbf$ and an ordinal $\pi^\mbf$,
  \item for each $i\in I^\mbf$, a FS iteration $\Pbb^\mbf_{i,\pi^\mbf}=\la\Pbb^\mbf_{i,\xi},\Qnm^\mbf_{i,\xi}:\xi<\pi^\mbf\ra$ such that, for any $i\leq j$ in $I^\mbf$ and $\xi<\pi^\mbf$, if $\Pbb^\mbf_{i,\xi}\lessdot\Pbb^\mbf_{j,\xi}$ then $\Pbb^\mbf_{j,\xi}$ forces $\Qnm^\mbf_{i,\xi}\lessdot_{V^{\Pbb^\mbf_{i,\xi}}}\Qnm^\mbf_{j,\xi}$.
\end{enumerate}
According to this notation, $\Pbb^\mbf_{i,0}$ is the trivial poset and $\Pbb^\mbf_{i,1}=\Qnm^\mbf_{i,0}$.
By Lemma~\ref{parallellim}, $\Pbb^\mbf_{i,\xi}$ is a complete subposet of $\Pbb^\mbf_{j,\xi}$ for all $i\leq j$ in $I^\mbf$ and $\xi\leq\pi^\mbf$.

We drop the upper index $\mbf$ when it is clear from the context. If $j\in I$ and $G$ is $\mathbb{P}_{j,\pi}$-generic over $V$ we denote  $V_{i,\xi}=V[G\cap\Pbb_{i,\xi}]$ for all $i\leq j$ in $I$ and $\xi\leq\pi$ . Clearly, $V_{i,\xi}\subseteq V_{j,\eta}$ for all $i\leq j$ in $I$ and $\xi\leq\eta\leq\pi$. The idea of such a construction is to obtain a matrix $\langle V_{i,\xi}:i\in I,\ \xi\leq\pi\rangle$ of generic extensions as illustrated in Figure~\ref{FigMatit}.

\begin{figure}
\begin{center}
  \includegraphics[width=\textwidth]{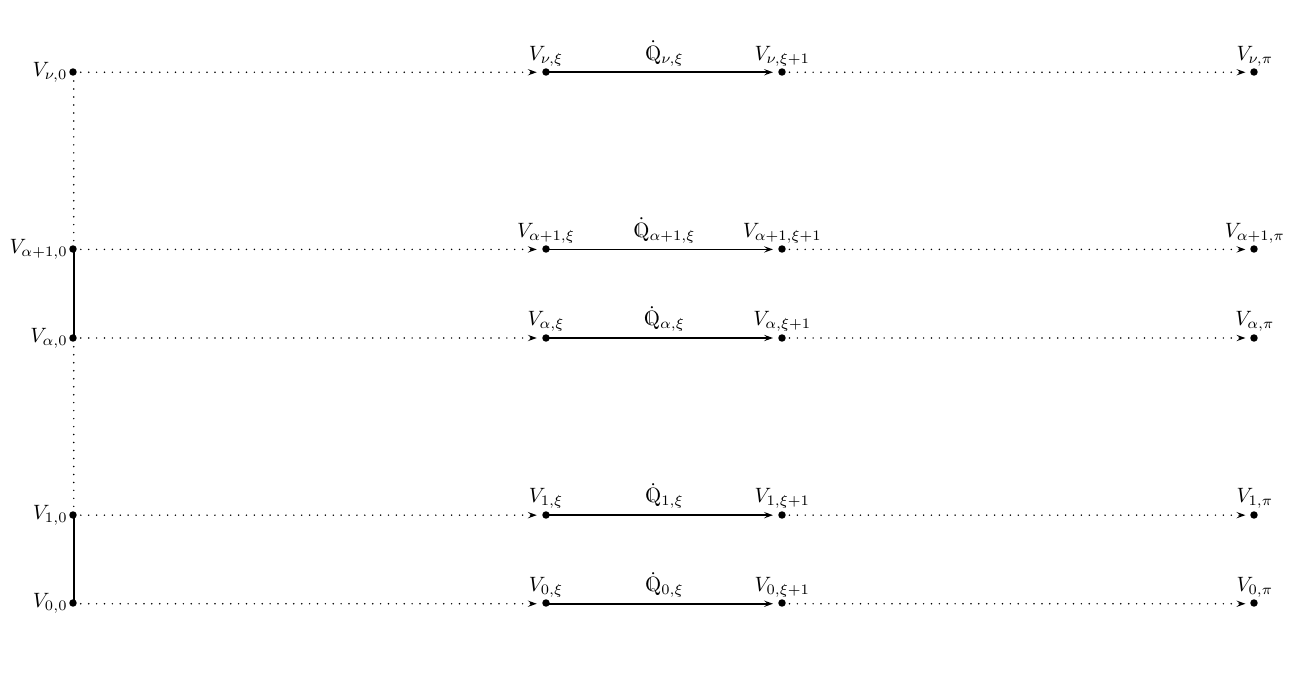}
  \caption{Matrix iteration with $I^\mbf=\nu+1$ where $\nu$ is an ordinal.}
  \label{FigMatit}
\end{center}
\end{figure}

If $\xi\leq\pi$, $\mbf\frestr\xi$ (\emph{horizontal restriction}) denotes the matrix iteration with $I^{\mbf{\upharpoonright}\xi}=I$ and $\pi^{\mbf{\upharpoonright}\xi}=\xi$ where the FS iterations are the same as in (II) but restricted to $\xi$. On the other hand, for any $J\subseteq I$, $\mbf|J$ (\emph{vertical restriction}) denotes the matrix iteration with $I^{\mbf|J}=J$ and $\pi^{\mbf|J}=\pi$ where the FS iterations for $i\in J$ are exactly as in (II).
\end{definition}

Although $I^\mbf$ is an ordinal in all our applications, it is more practical to use it as a well order in general because it eases the notation when dealing with $\mbf|J$ in the case that $J$ is a set of ordinals but not an ordinal (as in the last part of the proof of Main Lemma~\ref{main}).

The following type of matrix iteration is the one we are going to deal with throughout the whole text.

\begin{definition}[Simple matrix iteration]\label{Defcohsys}
   A \emph{simple matrix iteration} $\mbf$ is a matrix iteration, composed additionally of
   a function $\Delta^\mbf:\pi^\mbf\to I^\mbf$, that satisfies: for each $\xi<\pi^\mbf$, there is a $\Por^\mbf_{\Delta^\mbf(\xi),\xi}$-name $\Qnm^\mbf_\xi$ of a poset such that, for each $i\in I^\mbf$,
         \[\Qnm^\mbf_{i,\xi}=\left\{\begin{array}{ll}
                 \Qnm^\mbf_\xi  & \text{if $i\geq\Delta^\mbf(\xi)$,}\\
                 \mathds{1} & \text{otherwise.}
                 \end{array}\right.\]
   The upper index $\mbf$ is omitted when there is
   no risk of ambiguity.
\end{definition}

A simple matrix iteration is easily constructed by recursion on $\xi\leq\pi$. When $\mbf\frestr\xi$ is already constructed, $\Delta(\xi)$ and $\Qnm_\xi$ are freely defined, which allows to extend the matrix to $\mbf\frestr(\xi+1)$. Limit steps are uniquely determined by taking direct limits. %Also note that the generic set is added over $V_{\Delta(\xi),\xi}$ and it may not be generic over $V_{i,\xi}$ for larger $i$, which is the reason we say that a \emph{restricted generic set (or real)} is added at step $\xi$.
When $\Qnm_\xi$ adds a real, it will be generic over $V_{\Delta(\xi),\xi}$ but not necessarily over $V_{i,\xi}$ for larger $i$, which is the reason we say that a \emph{restricted generic real} is added at step $\xi$.
For instance, let $\Dor$ be the Hechler poset for adding a dominating real. When $\Qnm_\xi=\Dor^{V_{\Delta(\xi),\xi}}$, the generic real added at $\xi$ is dominating over $V_{\Delta(\xi),\xi}$. Moreover, more restricted generic sets are allowed, for example, when $\Qnm_\xi=\Dor^{N_\xi}$ where $N_\xi\in V_{\Delta(\xi),\xi}$ is a (small) transitive model of $\thzfc$, the generic real added at step $\xi$ is dominating over $N_\xi$ but not necessarily over $V_{\Delta(\xi),\xi}$.

Most of the time we deal with simple matrix iterations where $I^\mbf=\nu+1$ for some ordinal $\nu$, unless we are reasoning with restrictions of such matrix iteration. In this case, if the simple matrix iteration is composed by ccc posets and $\nu$ has uncountable cofinality, then $\Por_{\nu,\xi}$ is the direct limit of the posets below it in the matrix. More generally:

\begin{lemma}[{\cite{BrF}}, see also {\cite[Cor.~2.6]{mejiavert}}]\label{realint}
Let $\theta$ be an uncountable regular cardinal and let $\nu$ be an ordinal.  Assume that $\mbf$ is a simple matrix iteration such that
\begin{enumerate}[(i)]
   \item $I^\mbf=\nu+1$, $\cf(\nu)\geq\theta$,
   \item $\nu\notin\ran\Delta$, and
   \item for each $\xi<\pi^\mbf$, $\Por_{\nu,\xi}$ forces that $\Qnm_{\Delta(\xi),\xi}$ is $\theta$-cc.
\end{enumerate}
Then, for any $\xi\leq\pi$,
\begin{enumerate}
    \item[(a)]  $\Por_{\nu,\xi}$ is the direct limit of $\la\Por_{\alpha,\xi}:\alpha<\nu\ra$, and
    \item[(b)] if $\beta<\theta$ and $\dot{f}$ is a $\Por_{\nu,\xi}$-name of a function from $\beta$ into $\bigcup_{\alpha<\nu}V_{\alpha,\xi}$ then $\dot{f}$ is forced to be equal to a $\Por_{\alpha,\xi}$-name for some $\alpha<\nu$.  In particular, the reals in $V_{\nu,\xi}$ are precisely the reals in $\bigcup_{\alpha<\nu}V_{\alpha,\xi}$.
\end{enumerate}
\end{lemma}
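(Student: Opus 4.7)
The plan is to prove (a) by induction on $\xi\leq\pi$ and then deduce (b) from (a) together with the $\theta$-cc of $\Por_{\nu,\xi}$, where this $\theta$-cc is itself a routine byproduct of FS iteration preservation from hypothesis (iii) and the completeness $\Por_{\alpha,\xi}\lessdot\Por_{\nu,\xi}$ granted by Lemma \ref{parallellim}.

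For (a), the case $\xi=0$ is trivial, and when $\xi$ is a limit the finite support of FS iteration lands any $p\in\Por_{\nu,\xi}$ in some $\Por_{\nu,\eta}$ with $\eta<\xi$, which by IH sits in some $\Por_{\alpha,\eta}\subseteq\Por_{\alpha,\xi}$. The successor step $\xi=\eta+1$ carries the weight. I would write $p=(p\frestr\eta,\dot{q})$; the IH puts $p\frestr\eta$ into some $\Por_{\alpha_0,\eta}$. The coordinate $\dot{q}$ is a $\Por_{\nu,\eta}$-name for an element of $\Qnm_{\nu,\eta}=\Qnm_\eta$, which is a $\Por_{\Delta(\eta),\eta}$-name (note $\Delta(\eta)<\nu$ since $\nu\notin\ran\Delta$), so its realizations already lie in the small model $V_{\Delta(\eta),\eta}$. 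The standard nice-name theorem applied with the $\theta$-cc of $\Por_{\nu,\eta}$ then expresses $\dot{q}$ through a maximal antichain $A\subseteq\Por_{\nu,\eta}$ of size $<\theta$ together with $\Por_{\Delta(\eta),\eta}$-names $\dot{q}_r$ ($r\in A$) satisfying $r\Vdash\dot{q}=\dot{q}_r$. By IH each $r\in A$ lies in some $\Por_{\alpha_r,\eta}$; since $|A|<\theta\leq\cf(\nu)$ and $\theta$ is regular, $\alpha_1:=\sup_{r\in A}\alpha_r<\nu$. Setting $\alpha:=\max(\alpha_0,\alpha_1,\Delta(\eta))$, the reconstructed mixed name $\{(r,\dot{q}_r):r\in A\}$ is a $\Por_{\alpha,\eta}$-name that is forced by $\Por_{\nu,\eta}$ to equal $\dot{q}$ (using $\Por_{\alpha,\eta}\lessdot\Por_{\nu,\eta}$, which ensures that for any $\Por_{\nu,\eta}$-generic $G$ the unique element of $A\cap G$ already lies in $G\cap\Por_{\alpha,\eta}$), so $p$ is equivalent to a member of $\Por_{\alpha,\xi}$.

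For (b), fix $\dot{f}$ and $\beta<\theta$. For each $\gamma<\beta$, the $\theta$-cc of $\Por_{\nu,\xi}$ gives an antichain $A_\gamma\subseteq\Por_{\nu,\xi}$ of size $<\theta$ whose elements each force $\dot{f}(\gamma)$ to be a specific element of some $V_{\alpha,\xi}$, and by (a) each such element is codable as a $\Por_{\alpha_{r,\gamma},\xi}$-name with $\alpha_{r,\gamma}<\nu$. Because $\beta<\theta$, $|A_\gamma|<\theta$ and $\theta$ is regular, the whole index set $\{(\gamma,r):\gamma<\beta,\,r\in A_\gamma\}$ has cardinality $<\theta$; hence $\alpha^*:=\sup_{\gamma,r}\alpha_{r,\gamma}<\nu$ by $\cf(\nu)\geq\theta$, and $\dot{f}$ is forced to equal a $\Por_{\alpha^*,\xi}$-name. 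The ``in particular'' clause is the $\beta=\omega$ instance.

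The main obstacle I expect is the bookkeeping inside the successor step of (a), where one must convert a $\Por_{\nu,\eta}$-name $\dot{q}$ for an element of the iterand into an honest $\Por_{\alpha,\eta}$-name. The decisive ingredient is that $\Qnm_\eta$ is a $\Por_{\Delta(\eta),\eta}$-name whose realizations live in the small model $V_{\Delta(\eta),\eta}$, which is exactly what keeps the nice-name reduction compatible with the completeness $\Por_{\alpha,\eta}\lessdot\Por_{\nu,\eta}$.
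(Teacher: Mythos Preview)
The paper does not prove this lemma; it is stated with citation to \cite{BrF} and \cite[Cor.~2.6]{mejiavert}. Your argument is correct.

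One remark on part (a): your successor step routes through the $\theta$-cc of $\Por_{\nu,\eta}$, which does indeed follow from (iii) by the standard preservation of $\theta$-cc under FS iterations for regular uncountable $\theta$. A slightly more direct route---essentially what the paper later records as Lemma~\ref{projprop}(b)---avoids chain conditions entirely: pass to the dense set of conditions $p\in\Por_{\nu,\eta+1}$ for which $p(\eta)$ is already a $\Por_{\Delta(\eta),\eta}$-name; then the inductive hypothesis puts $p\frestr\eta$ in some $\Por_{\alpha_0,\eta}$, and $p\in\Por_{\max(\alpha_0,\Delta(\eta)),\eta+1}$ immediately (using only that $\Delta(\eta)<\nu$ from (ii)). Either approach works; yours has the mild cost of invoking $\theta$-cc twice (once here, once in (b)), while the density argument keeps (a) self-contained and isolates the chain condition to where it is genuinely needed in (b). Your proof of (b) is fine.
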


\subsection{Relational systems and preservation}\label{SubsecRelSys}

We review the theory of preservation properties for FS iterations developed by Judah and Shelah~\cite{JS} and Brendle~\cite{Br}. A similar presentation also appears in~\cite{mejiavert} and a generalized version can be found in~\cite[Sect.~4]{CM}.

\begin{definition}\label{DefRelSys}
A \textit{relational system} is a triple $\Rbf=\la X, Y, \sqsubset\ra$ where $\sqsubset$ is a relation.\footnote{Although the relation $\sqsubset$ is only relevant when restricted to $X\times Y$, there is no need to demand it to be contained in $X\times Y$. See Example~\ref{ExmPrs}(5) where $\sqsubset$ is just $=$.} For $x\in X$ and $y\in Y$, $x\sqsubset y$ is often read $y\sqsubset$-\textit{dominates} $x$.
\begin{enumerate}[(1)]
    \item A family $F\subseteq X$ is \textit{$\Rbf$-bounded} if there is a member of $Y$ that $\sqsubset$-dominates every member of $F$, otherwise we say that the set is \emph{$\Rbf$-unbounded}. Define the cardinal $\bfrak(\Rbf)$ as the smallest size of an $\Rbf$-unbounded family.

    \item Dually, $D\subseteq Y$ is \textit{$\Rbf$-dominating} if every member of $X$ is $\sqsubset$-dominated by some member of $D$. The cardinal $\dfrak(\Rbf)$ is defined as the smallest size of an $\Rbf$-dominating family.

    \item An object $x\in X$ is \textit{$\Rbf$-unbounded over a set $M$} if $x\not\sqsubset y$ for all $y\in Y\cap M$.

    \item If $\theta$ is a cardinal, a family $F\subseteq X$ is \emph{strongly $\theta$-$\Rbf$-unbounded} if $|F|\geq\theta$ and, for any $y\in Y$, $|\{x\in F : x\sqsubset y\}|<\theta$.
\end{enumerate}
\end{definition}

\begin{lemma}\label{StrUnb-effect}
   In the context of Definition~\ref{DefRelSys}, assume that $F\subseteq X$ is a strongly $\theta$-$\Rbf$-unbounded family. Then:
   \begin{enumerate}[(a)]
       \item $F$ is $\Rbf$-unbounded, in particular, $\bfrak(\Rbf)\leq|F|$.\footnote{The case $\theta\leq1$ is uninteresting and trivial. When $\theta=0$, the existence of a strongly $\theta$-$\Rbf$-unbounded family implies $Y=\emptyset$, so $\bfrak(\Rbf)=0$; when $F$ is a strongly $1$-$\Rbf$-unbounded family, $F\neq\emptyset$ and, for any $y\in Y$, no member of $F$ is $\sqsubset$-dominated by $y$, so $\bfrak(\Rbf)\leq1$.}
       \item Whenever $\theta$ is regular, $|F|\leq\dfrak(\Rbf)$.
   \end{enumerate}
\end{lemma}

The previous fact is the reason why strongly unbounded families are used to obtain upper bounds of $\bfrak(\Rbf)$ and lower bounds of $\dfrak(\Rbf)$, so their preservation in forcing extensions helps to force values for such cardinals.

The following two definitions are the central concepts for preservation of strongly unbounded families.

\begin{definition}
Say that $\Rbf=\langle X,Y,\sqsubset\rangle$ is a \textit{Polish relational system (Prs)} if the following is satisfied:
\begin{enumerate}[(i)]
\item $X$ is a perfect Polish space,
\item $Y$ is a non-empty analytic subspace of some Polish space $Z$ and
\item $\sqsubset\cap(X\times Z)=\bigcup_{n<\omega}\sqsubset_{n}$ where $\la\sqsubset_{n}\ra_{n<\omega}$  is some increasing sequence of closed subsets of $X\times Z$ such that, for any $n<\omega$ and for any $y\in Y$,
$(\sqsubset_{n})^{y}=\{x\in X:x\sqsubset_{n}y \}$ is closed nowhere dense.
\end{enumerate}

By (iii), $\la X,\Mcal(X),\in\ra$ is Tukey-Galois below $\Rbf$ where $\Mcal(X)$ denotes the $\sigma$-ideal of meager subsets of $X$. Therefore, $\bfrak(\Rbf)\leq \non(\Mcal)$ and $\cov(\Mcal)\leq\dfrak(\Rbf)$.
\end{definition}

\begin{definition}[Judah and Shelah {\cite{JS}}]\label{JS}
Let $\Rbf=\la X, Y, \sqsubset\ra$ be a Prs and let $\theta$ be a cardinal. A poset $\Por$ is \textit{$\theta$-$\Rbf$-good} if, for any $\Por$-name $\dot{h}$ for a member of $Y$, there is a non-empty $H\subseteq Y$ (in the ground model) of size ${<}\theta$ such that, for any $x\in X$, if $x$ is $\Rbf$-unbounded over  $H$ then $\Vdash x\not\not\sqsubset \dot{h}$.

Say that $\Por$ is \textit{$\Rbf$-good} if it is $\aleph_1$-$\Rbf$-good.
\end{definition}

%Note that any poset is $\theta$-$\Rbf$-good for any $\theta>\dfrak(\Rbf)$, so this notion is interesting for $\theta\leq\dfrak(\Rbf)$.

Definition~\ref{JS} describes a property used to preserve strongly $\Rbf$-unbounded families, which is respected by FS iterations. Concretely, when $\theta$ is uncountable regular,

\begin{enumerate}[(I)]
    \item any $\theta$-$\Rbf$-good poset preserves all the strongly $\theta$-$\Rbf$-unbounded families from the ground model and
    \item FS iterations of $\theta$-cc $\theta$-$\Rbf$-good posets produce $\theta$-$\Rbf$-good posets.
\end{enumerate}

Hence, by Lemma~\ref{StrUnb-effect}, posets that are $\theta$-$\Rbf$-good work to preserve $\bfrak(\Rbf)$ small and $\dfrak(\Rbf)$ large.
Clearly, $\theta$-$\Rbf$-good implies $\theta'$-$\Rbf$-good whenever $\theta\leq \theta'$, and any poset completely embedded into a $\theta$-$\Rbf$-good poset is also $\theta$-$\Rbf$-good. Also note the trivial fact that any poset is $\dfrak(\Rbf)^+$-good.

Now, we present the instances of Prs and their corresponding good posets that we use in our applications.

\begin{lemma}[{\cite[Lemma~4]{M}}]
If $\Rbf$ is a Prs and $\theta$ is an uncountable regular cardinal then any poset of size ${<}\theta$ is $\theta$-$\Rbf$-good. In particular, Cohen forcing is $\Rbf$-good.
\end{lemma}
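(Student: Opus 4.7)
My plan is to use Shoenfield absoluteness to extract, for each $p \in \Por$, a single ground-model $y_p \in Y$ that $\sqsubset$-dominates every $x \in X \cap V$ forced by $p$ to be $\sqsubset$-below $\dot{h}$; the collection $H := \{y_p : p \in \Por\}$ will then have cardinality $\leq |\Por| < \theta$ and witness $\theta$-$\Rbf$-goodness of $\Por$. Given a $\Por$-name $\dot{h}$ for a member of $Y$, for each $p \in \Por$ and $n < \omega$ I would set $F_{p,n} := \{x \in X : p \Vdash x \sqsubset_n \dot{h}\}$ and $F_p := \bigcup_{n<\omega} F_{p,n}$. A first easy check shows that $F_{p,n}$ is closed in $X$: if $(x_k)_k$ converges to $x$ in $X$ with each $x_k \in F_{p,n}$, then in any generic extension $V[G]$ containing $p$ one has $x_k \sqsubset_n \dot{h}[G]$ for every $k$, so the closedness of $\sqsubset_n$ forces $x \sqsubset_n \dot{h}[G]$, whence $p \Vdash x \sqsubset_n \dot{h}$. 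Consequently $F_p$ is an $F_\sigma$ subset of $X$, with a Borel code in $V$.

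The heart of the argument is then: for any $\Por$-generic $G$ with $p \in G$, the value $\dot{h}[G] \in Y^{V[G]}$ satisfies $F_p \subseteq \{x : x \sqsubset \dot{h}[G]\}$, because $x \in F_{p,n}$ gives $x \sqsubset_n \dot{h}[G]$. Hence the sentence
\[
\exists y\,\bigl(y \in Y \wedge \forall x\,(x \in F_p \to \exists n\,(x \sqsubset_n y))\bigr)
\]
is true in $V[G]$. Since $Y$ is analytic (so $y \in Y$ is $\Sigma^1_1$), each $\sqsubset_n$ is closed (so $x \sqsubset_n y$ is $\Pi^0_1$), and $F_p$ is Borel with a code in $V$, this sentence is $\Sigma^1_2$ with real parameters lying in $V$. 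By Shoenfield absoluteness it therefore holds in $V$, and I pick $y_p \in Y \cap V$ witnessing it.

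Taking $H := \{y_p : p \in \Por\}$ gives a non-empty subset of $Y$ with $|H| \leq |\Por| < \theta$. For any $x \in X$ that is $\Rbf$-unbounded over $H$, suppose for contradiction that some condition forces $x \sqsubset \dot{h}$; by density there are $q \in \Por$ and $n < \omega$ with $q \Vdash x \sqsubset_n \dot{h}$, so $x \in F_q$ and therefore $x \sqsubset y_q \in H$, contradicting the unboundedness. This proves $\Vdash x \not\sqsubset \dot{h}$, establishing $\theta$-$\Rbf$-goodness, and the special case $|\Cor| = \aleph_0 < \aleph_1$ yields that Cohen forcing is $\Rbf$-good. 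The one place where care is needed is verifying that the displayed sentence is genuinely $\Sigma^1_2$ with ground-model parameters: that is exactly what the closedness of the fibers $F_{p,n}$ (and hence the Borel code of $F_p$ in $V$) buys us, for without it Shoenfield absoluteness could not convert the $V[G]$-witness $\dot{h}[G]$ into a ground-model $y_p$.
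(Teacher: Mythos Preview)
Your proof is correct. The paper does not include its own proof of this lemma; it simply cites the result from \cite{M}. Your argument via Shoenfield absoluteness is a standard and clean way to handle the fact that $Y$ is only assumed analytic (rather than Polish), which is precisely what makes extracting a ground-model witness $y_p$ nontrivial. The verification that the displayed sentence is $\Sigma^1_2$ is accurate: writing $y\in Y$ as $\exists w\,\psi(y,w)$ with $\psi$ Borel, the conjunction with the $\Pi^1_1$ clause $\forall x\,(x\in F_p\to x\sqsubset y)$ remains $\Pi^1_1$ in $(y,w)$, so the outer existential yields $\Sigma^1_2$. One small stylistic remark: in the final step you say ``by density there are $q$ and $n$ with $q\Vdash x\sqsubset_n\dot h$''; it would be slightly clearer to say that if some $p$ forces $x\sqsubset\dot h$ then some $q\leq p$ decides the witness $n$, so $x\in F_{q,n}\subseteq F_q$ and hence $x\sqsubset y_q\in H$---but this is exactly what you mean, and the argument is sound.
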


\begin{example}\label{ExmPrs}
  \begin{enumerate}[(1)]
     \item \textit{Preserving non-meager sets:} Consider the Polish relational system $\Ed:=\la\omega^\omega,\omega^\omega,\neq^*\ra$. By~\cite[Thm.~2.4.1 \& Thm.~2.4.7]{BJ},  $\bfrak(\Ed)=\non(\Mcal)$ and $\dfrak(\Ed)=\cov(\Mcal)$.

     \item \textit{Preserving unbounded families:} The relational system $\Dbf:=\la\omega^\omega,\omega^\omega,\leq^{*}\ra$ is Polish and $\bfrak(\Dbf)=\bfrak$ and $\dfrak(\Dbf)=\dfrak$.
     Any $\mu$-$\mathrm{Fr}$-linked poset is $\mu^+$-$\Dbf$-good (see Lemma~\ref{ufextGood}).

     \item \textit{Preserving null-covering families:} Define $\Omega_n:=\{a\in [2^{<\omega}]^{<\aleph_0}:\lambda^*(\bigcup_{s\in a}[s])\leq 2^{-n}\}$ (endowed with the discrete topology) and put $\Omega:=\prod_{n<\omega}\Omega_n$ with the product topology, which is a perfect Polish space. For every $x\in \Omega$ denote $N_{x}^{*}:=\bigcap_{n<\omega}\bigcup_{s\in x(n)}[s]$, which is clearly a Borel null set in $2^{\omega}$.

     Define the Prs $\Cn:=\la \Omega, 2^\omega, \sqsubset\ra$ where $x\sqsubset z$ iff $z\notin N_{x}^{*}$. Recall that any null set in $2^\omega$ is a subset of $N_{x}^{*}$ for some $x\in \Omega$, so $\Cn$ and $\la\Ncal(2^\omega), 2^\omega, \not\ni\ra$ are Tukey-Galois equivalent. Hence, $\bfrak(\Cn)=\cov(\Ncal)$ and $\dfrak(\Cn)=\non(\Ncal)$.

     Any $\mu$-centered poset is $\mu^+$-$\Cn$-good (see e.g.~\cite{Br}). In particular, $\sigma$-centered posets are $\Cn$-good.

     \item \textit{Preserving union of null sets is not null:} For each $k<\omega$ let $\id^k:\omega\to\omega$ such that $\id^k(i)=i^k$ for all $i<\omega$ and $\Hcal:=\{\id^{k+1}:k<\omega\}$. Let $\Lc^*:=\la\omega^\omega, \Scal(\omega, \Hcal), \in^*\ra$ be the Polish relational system where \[\Scal(\omega, \Hcal):=\{\varphi:\omega\to[\omega]^{<\aleph_0}:\exists{h\in\Hcal}\forall{i<\omega}(|\varphi(i)|\leq h(i))\}.\]
     As consequence of~\cite[Thm.~2.3.9]{BJ}, $\bfrak(\Lc^*)=\add(\Ncal)$ and $\dfrak(\Lc^*)=\cof(\Ncal)$.

     Any $\mu$-centered poset is $\mu^+$-$\Lc^*$-good (see~\cite{Br,JS}) so, in particular, $\sigma$-centered posets are $\Lc^*$-good. Besides,  Kamburelis~\cite{Ka} showed that any Boolean algebra with a strictly positive finitely additive measure is $\Lc^*$-good (in particular, subalgebras of random forcing).

     \item\label{ExmPresCont} \emph{Preserving large continuum:} Consider the Polish relational system $\Id:=\la\omega^\omega,\omega^\omega,=\ra$. It is clear that $\bfrak(\Id)=2$ and $\dfrak(\Id)=\cfrak$. Though this is a quite trivial Prs, we are interested in the following simple facts:
     \begin{enumerate}[({\ref{ExmPresCont}.}1)]
         \item $x\in\omega^\omega$ is $\Id$-unbounded over $M$ iff $x\notin M$.
         \item If $\theta\geq 2$ then $F\subseteq\omega^\omega$ is strongly $\theta$-$\Id$-unbounded iff $|F|\geq \theta$.
         \item Any $\theta$-cc poset is $\theta$-$\Id$-good.\footnote{The converse is true when $\theta\leq\cfrak$. On the other hand, any poset is $\cfrak^+$-$\Id$-good.}
     \end{enumerate}
     Concretely, we use (5.2) as a simple resource to justify why the continuum is increased after Boolean ultrapowers of a ccc poset (Theorem~\ref{cichonmax3}).
  \end{enumerate}
\end{example}

The following results indicate that strongly unbounded families can be added with Cohen reals, and the effect on $\bfrak(\Rbf)$ and $\dfrak(\Rbf)$ by a FS iteration of good posets.

\begin{lemma}
Let $\mu$ be a cardinal with uncountable cofinality, $\Rbf=\la X, Y, \sqsubset\ra$ a Prs and let $\la\mathbb{P}_{\alpha}\ra_{\alpha<\mu}$ be a $\lessdot$-increasing sequence of $\cf(\mu)$-cc posets such that  $\Por_\mu=\limdir_{\alpha<\mu}\Por_{\alpha}$. If $\Por_{\alpha+1}$ adds a Cohen real $\dot{c}_\alpha\in X$ over $V^{\Por_\alpha}$ for any $\alpha<\mu$, then $\Por_{\mu}$ forces that $\{\dot{c}_\alpha:\alpha<\mu\}$ is a strongly $\mu$-$\Rbf$-unbounded family of size $\mu$.
\end{lemma}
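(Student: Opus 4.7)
The plan is to verify strong $\mu$-$\Rbf$-unboundedness by producing, for any $\Por_\mu$-name $\dot y$ for a member of $Y$, some $\beta<\mu$ such that $\Por_\mu\Vdash \{\alpha<\mu:\dot c_\alpha\sqsubset\dot y\}\subseteq\beta$. That the family $\{\dot c_\alpha:\alpha<\mu\}$ is forced to have size $\mu$ is immediate: for $\alpha<\gamma<\mu$ one has $\dot c_\alpha\in V^{\Por_{\alpha+1}}\subseteq V^{\Por_\gamma}$, whereas $\dot c_\gamma$ is forced to be a real \emph{not} in $V^{\Por_\gamma}$, so the $\dot c_\alpha$ are pairwise distinct.

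\emph{Reflection step.} Being a direct limit along $\mu$ of $\cf(\mu)$-cc posets with $\cf(\mu)$ uncountable, $\Por_\mu$ is itself $\cf(\mu)$-cc. Hence every antichain of $\Por_\mu$ has size $<\cf(\mu)$, and since each of its conditions lies in some $\Por_\alpha$ with $\alpha<\mu$, the whole antichain is contained in $\Por_{\alpha^*}$ for some $\alpha^*<\mu$ (a sup of fewer than $\cf(\mu)$ ordinals below $\mu$ stays $<\mu$). Elements of the analytic set $Y\subseteq Z$ are coded by reals, and a name for a real is determined by countably many maximal antichains (one for each bit of information); applying the previous observation to each such antichain and taking a final countable sup, which is legal because $\cf(\mu)>\aleph_0$, one obtains $\beta<\mu$ so that $\dot y$ may be taken to be a $\Por_\beta$-name.

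\emph{Cohen-avoidance step.} Fix any $\alpha$ with $\beta\leq\alpha<\mu$. By the definition of a Prs, the set $\{x\in X:x\sqsubset\dot y\}=\bigcup_{n<\omega}(\sqsubset_n)^{\dot y}$ is an $F_\sigma$-meager subset of $X$ whose Borel code is computable from $\dot y$, and hence lies in $V^{\Por_\beta}\subseteq V^{\Por_\alpha}$. Since $\dot c_\alpha$ is forced to be Cohen over $V^{\Por_\alpha}$, it avoids every meager Borel set coded in that model, so $\Por_\mu\Vdash \dot c_\alpha\not\sqsubset\dot y$. Consequently $\{\alpha<\mu:\dot c_\alpha\sqsubset\dot y\}$ is forced into $\beta$, completing the verification of strong $\mu$-$\Rbf$-unboundedness.

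The main delicate point is the reflection step: certifying that an arbitrary $\Por_\mu$-name for a real is equivalent to a $\Por_\beta$-name for some $\beta<\mu$. This uses crucially both the uncountable cofinality of $\mu$ (to bound the countable sup inside $\mu$) and the $\cf(\mu)$-cc of the iterands (to bound antichain sizes); once it is established, the Cohen-avoidance conclusion is the standard fact that Cohen reals avoid all meager Borel sets coded in the intermediate model.
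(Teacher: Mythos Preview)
The paper states this lemma without proof (it is a standard fact, essentially folklore in the area), so there is no paper argument to compare against. Your proof is correct and is exactly the standard one: reflect the name $\dot y$ down to some $\Por_\beta$ using $\cf(\mu)$-cc together with $\cf(\mu)>\aleph_0$, then observe that each later Cohen real $\dot c_\alpha$ (for $\alpha\geq\beta$) avoids the $F_\sigma$-meager set $\bigcup_n(\sqsubset_n)^{\dot y}$ coded in $V^{\Por_\alpha}$. The size-$\mu$ claim and the bound $|\{\alpha:\dot c_\alpha\sqsubset\dot y\}|\leq|\beta|<\mu$ are handled correctly.
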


\begin{theorem}\label{sizeforbd}
Let $\theta$ be an uncountable regular cardinal, $\Rbf=\la X, Y, \sqsubset\ra$ a Prs, $\pi\geq\theta$ an ordinal, and let $\Por_{\pi}=\langle\Por_{\alpha},\Qnm_{\alpha}:\alpha<\pi\rangle$ be a FS iteration such that, for each $\alpha<\pi$, $\Qnm_{\alpha}$ is a $\Por_{\alpha}$-name of a  non-trivial $\theta$-$\Rbf$-good $\theta$-cc poset. Then, $\Por_{\pi}$ forces that $\bfrak(\Rbf)\leq\theta$ and  $|\pi|\leq\dfrak(\Rbf)$.
\end{theorem}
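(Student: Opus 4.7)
The strategy is to exhibit, inside $V^{\Por_\pi}$, a strongly $\mu$-$\Rbf$-unbounded family of size $\mu$ for every regular uncountable cardinal $\mu$ with $\theta\leq\mu\leq|\pi|$. Lemma~\ref{StrUnb-effect}(a) applied at $\mu=\theta$ then gives $\bfrak(\Rbf)\leq\theta$, and Lemma~\ref{StrUnb-effect}(b) applied at each admissible $\mu$ yields $\mu\leq\dfrak(\Rbf)$, so that $|\pi|\leq\dfrak(\Rbf)$ follows by taking $\mu=|\pi|$ if $|\pi|$ is regular, and by taking suprema over $\mu<|\pi|$ otherwise.

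To build the family for a fixed such $\mu$, I would exploit that $\omega\cdot\mu=\mu\leq|\pi|\leq\pi$ for uncountable $\mu$, so the subsequence $\la\Por_{\omega\cdot\xi}:\xi\leq\mu\ra$ is well-defined and $\lessdot$-increasing. Each $\Por_{\omega\cdot\xi}$ is $\theta$-cc (as a FS iteration of $\theta$-cc posets for $\theta$ regular uncountable), hence $\cf(\mu)$-cc since $\cf(\mu)=\mu\geq\theta$; and $\Por_{\omega\cdot\mu}=\Por_\mu=\limdir_{\xi<\mu}\Por_{\omega\cdot\xi}$ by the FS direct-limit convention at the limit ordinal $\omega\cdot\mu=\mu$. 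The standard fact that a FS iteration of non-trivial forcings adds a Cohen real across every $\omega$-block provides, for each $\xi<\mu$, a $\Por_{\omega\cdot(\xi+1)}$-name $\dot c_\xi$ for a Cohen real over $V^{\Por_{\omega\cdot\xi}}$; since $X$ is a perfect Polish space, $\dot c_\xi$ can be viewed as an element of $X$ via a continuous injection $\omega^\omega\hookrightarrow X$. The lemma immediately preceding the theorem then yields that $\Por_\mu$ forces $\{\dot c_\xi:\xi<\mu\}$ to be a strongly $\mu$-$\Rbf$-unbounded family of size $\mu$.

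The last step is to transfer this conclusion from $V^{\Por_\mu}$ up to $V^{\Por_\pi}$, which is where goodness enters. In $V^{\Por_\mu}$ the quotient $\Por_\pi/\Por_\mu$ is the FS iteration of the tail $\la\Qnm_\alpha:\mu\leq\alpha<\pi\ra$ of $\theta$-cc and $\theta$-$\Rbf$-good posets, so by property (II) of Section~\ref{SubsecRelSys} it is itself $\theta$-$\Rbf$-good, and in particular $\mu$-$\Rbf$-good since $\mu\geq\theta$. Property (I) then preserves the strongly $\mu$-$\Rbf$-unbounded family into $V^{\Por_\pi}$, completing the argument.

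The main points requiring care are the reindexing (checking that $\la\Por_{\omega\cdot\xi}:\xi\leq\mu\ra$ really satisfies the hypotheses of the preceding lemma, particularly that its direct limit at $\mu$ agrees with $\Por_\mu$) and the routine but essential coding of a Cohen real as an element of $X$ through the perfect Polish structure; once these are dispatched, the proof is just an assembly of the preceding lemma with the preservation package (I)--(II).
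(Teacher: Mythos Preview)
Your proposal is correct and is essentially the standard argument; the paper itself does not prove this result but merely cites \cite[Thm.~4.15]{CM} and \cite[Thm.~3.6]{GMS}, so you are filling in what the paper omits. The only cosmetic point is that the phrase ``via a continuous injection $\omega^\omega\hookrightarrow X$'' is slightly loose: what one actually uses is either that every perfect Polish space contains a dense $G_\delta$ copy of $\omega^\omega$, or equivalently that the nonempty-open-set posets of any two perfect Polish spaces are forcing equivalent, so a Cohen real for $\omega^\omega$ yields a Cohen real for $X$ which then avoids each meager set $\{x:x\sqsubset y\}$ from clause~(iii) of the Prs definition.
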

\begin{proof}
See e.g.~\cite[Thm.~4.15]{CM} or~\cite[Thm.~3.6]{GMS}.
\end{proof}

For the remainder of this section, fix transitive models $M\subseteq N$ of $\thzfc$ and a Prs $\Rbf=\langle X,Y,\sqsubset\rangle$ coded in $M$. The following results are related to preservation of $\Rbf$-unbounded reals along simple matrix iterations.

\begin{lemma}[{\cite[Lemma~11]{BrF}}, see also {\cite[Lemma~5.13]{mejia-temp}}]
   Assume that $\Por\in M$ is a poset. Then, in $N$,  $\Por$ forces that every $c\in X^N$ that is $\Rbf$-unbounded over $M$ is $\Rbf$-unbounded over $M^{\Por}$.
\end{lemma}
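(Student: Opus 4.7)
The plan is to argue by contradiction. Assume in $N$ that some $p\in\Por$ and some $\Por$-name $\dot y\in M$ for an element of $Y$ satisfy $p\Vdash c\sqsubset\dot y$; we shall manufacture a witness $y^{*}\in Y\cap M$ with $c\sqsubset y^{*}$, directly contradicting the hypothesis that $c$ is $\Rbf$-unbounded over $M$. Since $\sqsubset=\bigcup_{n}\sqsubset_{n}$ and the sequence is increasing, we may extend $p$ and fix $n<\omega$ with $p\Vdash c\sqsubset_{n}\dot y$.

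Now exploit the Polish structure. Working in $M$, the complement of $\sqsubset_{n}$ in $X\times Z$ is open, so it can be written as a countable union $\bigcup_{i}(A_{i}\times B_{i})$ with $A_{i}$ and $B_{i}$ drawn from fixed countable bases of $X$ and $Z$ coded in $M$. Since $c\in X$ and each $A_{i}$ is coded in $M$, the statement ``$c\in A_{i}$'' is absolute between $N$ and any extension. From $p\Vdash(c,\dot y)\notin A_{i}\times B_{i}$ we therefore deduce, for every $i$ with $c\in A_{i}$, that $p\Vdash\dot y\notin B_{i}$. The key construction, performed entirely in $M$ and independent of $c$, is the closed set
\[
C:=\bigcap\{\,Z\menos B : B \text{ is a basic open of }Z\text{ in }M\text{ and }p\Vdash\dot y\notin B\,\}.
\]
For any $\Por$-generic $G$ over $M$ with $p\in G$ we have $\dot y[G]\in C\cap Y$, so $C\cap Y$ is nonempty in $M[G]$. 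Since $C$ is closed and $Y$ is analytic, $C\cap Y$ is an analytic set coded in $M$, so its nonemptiness is a $\Sigma^{1}_{1}$ statement and hence absolute; we pick $y^{*}\in C\cap Y\cap M$.

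It remains to verify $c\sqsubset_{n} y^{*}$. Fix $i$: if $c\notin A_{i}$ then $(c,y^{*})\notin A_{i}\times B_{i}$ automatically; if $c\in A_{i}$ then $p\Vdash\dot y\notin B_{i}$ by the previous step, so $y^{*}\notin B_{i}$ (as $y^{*}\in C$), and again $(c,y^{*})\notin A_{i}\times B_{i}$. Thus $(c,y^{*})$ avoids the entire open cover of the complement of $\sqsubset_{n}$, so $c\sqsubset_{n} y^{*}$ and hence $c\sqsubset y^{*}$, the desired contradiction. The main conceptual move is the elimination of $c$ from the construction of the witness: the set $C$ depends only on $p$ and $\dot y$ and therefore lives in $M$, while the only role of $c$ is the trivial final check exploiting the product shape of the covering rectangles together with $\Sigma^{1}_{1}$-absoluteness of nonemptiness of analytic sets.
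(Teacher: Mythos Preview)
Your argument is correct, and since the paper does not supply its own proof (the lemma is merely cited from \cite{BrF} and \cite{mejia-temp}), there is nothing to compare against. Your route---extracting from $p\Vdash c\sqsubset_n\dot y$ a closed set $C\in M$ of possible values of $\dot y$, using $\Sigma^1_1$-absoluteness to pick $y^*\in C\cap Y\cap M$, and then checking $c\sqsubset_n y^*$ via the rectangular cover of the complement of $\sqsubset_n$---is the standard one.

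One point deserves a sentence of justification. When you derive $p\Vdash\dot y\notin B_i$ from $p\Vdash(c,\dot y)\notin A_i\times B_i$, you are forcing over $N$ (since $c\in N$); but for $C$ to live in $M$ you must read ``$p\Vdash\dot y\notin B$'' as forcing over $M$. These agree here: if $p\not\Vdash^M\dot y\notin B_i$, then some $q\leq p$ in $\Por$ (hence in $M$) forces $\dot y\in B_i$ over $M$, and since every $\Por$-generic over $N$ is generic over $M$ and ``$\dot y[G]\in B_i$'' is absolute, $q\Vdash^N\dot y\in B_i$, contradicting $p\Vdash^N\dot y\notin B_i$. Making this explicit would close the only gap in the write-up.
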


\begin{lemma}[{\cite{BrF}}]\label{parallellim}
   Assume that $\Por_{0,\pi}=\langle\Por_{0,\alpha},\Qnm_{0,\alpha}:\alpha<\pi\ra\in M$ and $\Por_{1,\pi}=\langle\Por_{1,\alpha},\Qnm_{1,\alpha}:\alpha<\pi\ra\in N$ are FS iterations such that, for any $\alpha<\pi$, if $\Por_{0,\alpha}\lessdot_M\Por_{1,\alpha}$ then $\Por_{1,\alpha}$ forces that $\Qnm_{0,\alpha}\lessdot_{M^{\Por_{0,\alpha}}} \Qnm_{1,\alpha}$. Then $\Por_{0,\alpha}\lessdot_M\Por_{1,\alpha}$ for any $\alpha\leq\pi$.

   In addition, if $\pi$ is limit, $c\in X^N$ and, for any $\alpha<\pi$, $\Por_{1,\alpha}$ forces (in $N$) that $c$ is $\Rbf$-unbounded over $M^{\Por_{0,\alpha}}$, then $\Por_{1,\pi}$ forces  that $c$ is $\Rbf$-unbounded over $M^{\Por_{0,\pi}}$.
\end{lemma}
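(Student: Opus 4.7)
The lemma splits into two parts. Part 1 (the $\lessdot_M$-embedding $\Por_{0,\alpha} \lessdot_M \Por_{1,\alpha}$ for all $\alpha\le\pi$) I prove by induction on $\alpha$; Part 2 (preservation of $\Rbf$-unboundedness at limit $\pi$) I prove by reducing any purportedly bad condition in $\Por_{1,\pi}$ to an initial segment $\Por_{1,\alpha}$ via the finite support of FS iterations, and then extracting a witness in $Y \cap M[G_{0,\alpha}]$ that contradicts the hypothesis at stage $\alpha$.

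For Part 1, the base case $\alpha=0$ is trivial. The successor step $\alpha = \beta+1$ is the standard two-step composition of complete embeddings: the induction hypothesis $\Por_{0,\beta} \lessdot_M \Por_{1,\beta}$ together with the assumption that $\Por_{1,\beta}$ forces $\Qnm_{0,\beta} \lessdot_{M^{\Por_{0,\beta}}} \Qnm_{1,\beta}$ yields $\Por_{0,\beta+1} \lessdot_M \Por_{1,\beta+1}$. For the limit step, given a maximal antichain $A\in M$ of $\Por_{0,\alpha}$ and $q\in \Por_{1,\alpha}$, use finite support to find $\beta<\alpha$ with $q\in \Por_{1,\beta}$; by the IH at $\beta$, obtain a reduction $q_0\in \Por_{0,\beta}$ of $q$; by the maximality of $A$, extend $q_0$ to some $r\le p$ in $\Por_{0,\alpha}$ for some $p\in A$; finally, in $\Por_{1,\beta}$ extend $q$ and $r\upharpoonright\beta$ to a common condition $s$ and glue $s$ with $r\upharpoonright[\beta,\alpha)$ to witness the compatibility of $p$ and $q$ in $\Por_{1,\alpha}$.

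For Part 2, suppose toward contradiction that some $q\in \Por_{1,\pi}$ forces $c \sqsubset_n \dot{y}$ for some $\Por_{0,\pi}$-name $\dot{y}\in M$ of a member of $Y$ and some $n<\omega$. By finite support, $q\in \Por_{1,\alpha}$ for some $\alpha<\pi$; pick $\Por_{1,\alpha}$-generic $G_{1,\alpha}\ni q$ over $N$ and set $G_{0,\alpha} := G_{1,\alpha}\cap \Por_{0,\alpha}$, which is $\Por_{0,\alpha}$-generic over $M$ by Part 1. Write $\Por_q := \Por_{0,\pi}/G_{0,\alpha}\in M[G_{0,\alpha}]$. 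By Part 1 applied at stages $\beta\in[\alpha,\pi]$, in $N[G_{1,\alpha}]$ we have $\Por_q \lessdot_{M[G_{0,\alpha}]} \Por_{1,\pi}/G_{1,\alpha}$, and the latter forces the closed statement $\dot{y}\in\{z\in Z: c \sqsubset_n z\}$ (since $q\in G_{1,\alpha}$). In $M[G_{0,\alpha}]$ form the closed set $D := \{z\in Z : \forall V\ni z\ \text{basic open},\ \exists r\in \Por_q\,(r\Vdash_{\Por_q} \dot{y}\in V)\}$ of possible values of $\dot{y}$. The closedness of $\sqsubset_n$ together with the embedding $\Por_q \lessdot_{M[G_{0,\alpha}]} \Por_{1,\pi}/G_{1,\alpha}$ forces $D \subseteq \{z\in Z: c \sqsubset_n z\}$ in $N[G_{1,\alpha}]$: any alleged $z\in D\setminus\{z': c \sqsubset_n z'\}$ yields an open $V\ni z$ disjoint from this closed set, together with some $r\in \Por_q$ forcing $\dot{y}\in V$; passing to a $\Por_{1,\pi}/G_{1,\alpha}$-generic $H'$ with $r\in H'$, the $\Por_q$-name $\dot{y}$ satisfies $\dot{y}[H']=\dot{y}[H'\cap \Por_q]\in V\cap\{z': c \sqsubset_n z'\}=\varnothing$, a contradiction. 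Moreover, the analytic set $D\cap Y$ contains $\dot{y}[H]$ for any $\Por_q$-generic $H$, so is non-empty in a suitable extension; by $\Sigma^1_1$-absoluteness (Mostowski), it is non-empty already in $M[G_{0,\alpha}]$. Any $y^\alpha\in D\cap Y\cap M[G_{0,\alpha}]$ then satisfies $c \sqsubset_n y^\alpha$, contradicting that $c$ is $\Rbf$-unbounded over $M[G_{0,\alpha}]$ in $N[G_{1,\alpha}]$ (the hypothesis at stage $\alpha$).

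The main obstacle is the push-down in Part 2: Part 1 provides only $\Por_q \lessdot_{M[G_{0,\alpha}]} \Por_{1,\pi}/G_{1,\alpha}$ and not the stronger $\lessdot_{N[G_{1,\alpha}]}$, so the preceding single-step preservation lemma cannot be invoked as a black box to transfer ``$\Por_{1,\pi}/G_{1,\alpha}$ forces $c\sqsubset_n\dot{y}$'' directly to $\Por_q$. Instead one must construct the witness $y^\alpha\in M[G_{0,\alpha}]$ explicitly via the closed set $D$ and $\Sigma^1_1$-absoluteness, exploiting both the closedness of each $\sqsubset_n$ and the fact that $\dot{y}$ is a $\Por_q$-name so its value depends only on the $\Por_q$-component of any larger generic.
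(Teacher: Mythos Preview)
The paper does not include a proof of this lemma; it is merely cited from \cite{BrF} (Brendle--Fischer). So there is no ``paper's own proof'' to compare against.

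Your argument is correct and is essentially the standard proof one finds in the cited source and its descendants. Part~1 is the usual induction (the limit step via reductions and gluing is exactly right). For Part~2, your approach via the closed set $D$ of possible values of the $\Por_{0,\pi}$-name $\dot{y}$ in $M[G_{0,\alpha}]$, combined with $\Sigma^1_1$-absoluteness to extract a witness $y^\alpha\in D\cap Y\cap M[G_{0,\alpha}]$, is correct and is the standard way to handle the push-down without having $\Por_q\lessdot_{N[G_{1,\alpha}]}\Por_{1,\pi}/G_{1,\alpha}$. You have also correctly identified the main subtlety: the single-step preservation lemma (the one stated just before this one in the paper) cannot be invoked as a black box because the quotient embedding is only relative to $M[G_{0,\alpha}]$, so the explicit construction of $D$ is needed. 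One very minor point: when you say ``passing to a $\Por_{1,\pi}/G_{1,\alpha}$-generic $H'$ with $r\in H'$'', you are implicitly using that $r\in\Por_q$ is a condition in $\Por_{1,\pi}/G_{1,\alpha}$, which follows from the embedding $\Por_q\lessdot_{M[G_{0,\alpha}]}\Por_{1,\pi}/G_{1,\alpha}$ you already invoked; it would not hurt to make this explicit.
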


\begin{theorem}[{\cite{BrF}}, see also {\cite[Thm.~10 \& Cor.~1]{M}}]\label{matsizebd}
   Let $\mbf$ be a simple matrix iteration, and let $\Rbf=\la X,Y,\sqsubset\ra$ be a Polish relational system coded in $V$. Assume that, for any $\alpha\in I$, there is some $\xi_\alpha<\pi$ such that $\Por_{\alpha+1,\xi_\alpha}$ adds a real $\dot{c}_\alpha\in X$ that is $\Rbf$-unbounded over $V_{\alpha,\xi_\alpha}$. (Here, $\alpha+1$ denotes the immediate successor of $\alpha$ in $I$.)
   Then, for any $\alpha\in I$, $\Por_{\alpha+1,\pi}$ forces that $\dot{c}_{\alpha}$ is $\Rbf$-unbounded over $V_{\alpha,\pi}$.

   In addition, if $\mbf$ satisfies the hypothesis of Lemma~\ref{realint} with $\nu$ a cardinal of uncountable cofinality and $\theta=\cf(\nu)$, and $f:\cf(\nu)\to\nu$ is increasing and cofinal, then  $\Por_{\nu,\pi}$ forces that $\{\dot{c}_{f(\zeta)}:\zeta<\cf(\nu)\}$ is a strongly $\cf(\nu)$-$\Rbf$-unbounded family.
\end{theorem}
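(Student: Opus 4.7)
My plan is to prove the two assertions in turn, and the first will be the key to the second.

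\textbf{Conclusion 1 (propagation of $\Rbf$-unboundedness along the iteration).} Fix $\alpha\in I$. Set $M:=V_{\alpha,\xi_\alpha}$ and $N:=V_{\alpha+1,\xi_\alpha}$; then $M\subseteq N$ and $\dot c_\alpha\in X^N$ is $\Rbf$-unbounded over $M$ by hypothesis. View the tail iterations $\la\Por_{\alpha,\xi_\alpha+\eta},\Qnm_{\alpha,\xi_\alpha+\eta}:\eta<\pi-\xi_\alpha\ra$ (in $M$) and $\la\Por_{\alpha+1,\xi_\alpha+\eta},\Qnm_{\alpha+1,\xi_\alpha+\eta}:\eta<\pi-\xi_\alpha\ra$ (in $N$) as FS iterations over $M$ and $N$ respectively. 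The matrix iteration hypothesis ensures precisely the parallelism required by Lemma~\ref{parallellim}. I will then prove by induction on $\eta\leq\pi-\xi_\alpha$ the statement ``$\Por_{\alpha+1,\xi_\alpha+\eta}$ forces over $N$ that $\dot c_\alpha$ is $\Rbf$-unbounded over $V_{\alpha,\xi_\alpha+\eta}$.'' The base case $\eta=0$ is by hypothesis; the successor case follows from the single-step preservation lemma (cited just before Lemma~\ref{parallellim}) applied to $\Qnm_{\alpha,\xi_\alpha+\eta}\lessdot_{V_{\alpha,\xi_\alpha+\eta}}\Qnm_{\alpha+1,\xi_\alpha+\eta}$; and the limit case is exactly the ``in addition'' clause of Lemma~\ref{parallellim}. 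Taking $\eta=\pi-\xi_\alpha$ gives the conclusion.

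\textbf{Conclusion 2 (strong $\cf(\nu)$-unboundedness of the family).} Assume the hypotheses of Lemma~\ref{realint}, so $\theta=\cf(\nu)$ is uncountable regular and reals in $V_{\nu,\pi}$ lie in some $V_{\alpha,\pi}$ with $\alpha<\nu$. I need to show that $\Por_{\nu,\pi}$ forces, for every $y\in Y$, $|\{\zeta<\cf(\nu):\dot c_{f(\zeta)}\sqsubset y\}|<\cf(\nu)$. So fix $y\in Y\cap V_{\nu,\pi}$ in the extension. Since $y$ is a real (member of an analytic subset of a Polish space), Lemma~\ref{realint}(b) gives $\alpha<\nu$ with $y\in V_{\alpha,\pi}$. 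Choose $\zeta_0<\cf(\nu)$ with $f(\zeta_0)\geq\alpha$; this is possible since $f$ is cofinal in $\nu$. For any $\zeta\geq\zeta_0$ we have $f(\zeta)\geq\alpha$, hence $V_{\alpha,\pi}\subseteq V_{f(\zeta),\pi}$ and in particular $y\in V_{f(\zeta),\pi}$. By Conclusion 1 applied with $\alpha$ replaced by $f(\zeta)$, $\dot c_{f(\zeta)}$ is $\Rbf$-unbounded over $V_{f(\zeta),\pi}$, so $\dot c_{f(\zeta)}\not\sqsubset y$. Therefore $\{\zeta<\cf(\nu):\dot c_{f(\zeta)}\sqsubset y\}\subseteq\zeta_0$, which has cardinality $<\cf(\nu)$, as required.

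\textbf{Main obstacle.} The only nontrivial step is the induction in Conclusion 1. The successor case is a routine application of the single-step preservation result (being $\Rbf$-unbounded over a model is preserved by forcing over the model, using the $\lessdot_{M^{\Por}}$ relation supplied by the matrix hypothesis), and the limit case is exactly the content of the second part of Lemma~\ref{parallellim}, which must be applied with the parameter models shifting along the induction, not the fixed $M,N$ from the base. Care is only needed to set up the quotient (tail) iterations over $M$ and $N$ correctly so that the hypotheses of Lemma~\ref{parallellim} are literally met at each stage; once that bookkeeping is done, the rest is a direct appeal to the cited lemmas.
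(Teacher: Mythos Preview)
The paper does not supply its own proof of this theorem (it is cited from \cite{BrF} and \cite{M}), and your argument is the standard one from those references: induct along the tail iteration using the single-step lemma for successors and the second clause of Lemma~\ref{parallellim} for limits, then combine with Lemma~\ref{realint}(b). One minor remark: in a \emph{simple} matrix iteration the successor step is even easier than you phrase it, since for each $\xi$ either $\Qnm_{\alpha,\xi}=\Qnm_{\alpha+1,\xi}=\Qnm_\xi\in V_{\alpha,\xi}$ (and the unnamed lemma applies verbatim with $\Por=\Qnm_\xi$) or $\Qnm_{\alpha,\xi}=\mathds{1}$ (and $V_{\alpha,\xi+1}=V_{\alpha,\xi}$, so there is nothing to do); you never actually need a version of the single-step lemma for a strict embedding $\Por\lessdot_M\Qor$ with $\Por\neq\Qor$.
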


\section{Filter-linkedness}\label{Secufposets}

We first review some notation about filters on $\omega$. Denote by $\Fr:=\{x\subseteq\omega:|\omega\menos x|<\aleph_0\}$ the \emph{Frechet filter}. A filter $F$ on $\omega$ is \emph{free} if $\Fr\subseteq F$. A set $x\subseteq\omega$ is \emph{$F$-positive} if it intersects every member of $F$. Denote by $F^+$ the family of $F$-positive sets. Note that $x\in\Fr^+$ iff $x$ is an infinite subset of $\omega$.

In this section, we generalize the notion of \emph{Frechet-linkedness} from~\cite{mejiavert} (corresponding to $\Fr$-linkedness in Definition~\ref{Defufext}(2)), and its corresponding notions of linkedness and Knaster for posets. The third author~\cite{mejiavert} showed that $\mu$-$\Fr$-linked posets are $\mu^+$-$\Dbf$-good, so they preserve strongly $\mu^+$-$\Dbf$-unbounded families from the ground model, and they also satisfy a strong property related to preservation of certain type of mad families. We also look at the corresponding Knaster property and show that it implies preservation of strongly $\Dbf$-unbounded families and of the type of mad families presented in Definition~\ref{DefMd}. At the end, we show how these notions behave in FS iterations and FS products.

\subsection{Filter-linkedness and examples}

\begin{definition}\label{Defufext}
Let $\Por$ be a poset, $F$ a free filter on $\omega$ and let $\mu$ be an infinite cardinal.
\begin{enumerate}[(1)]
    \item The $\Por$-name $\dot{G}$ usually denotes the canonical name of the $\Por$ generic set. If $\bar{p}=\la p_n:n<\omega\ra$ is a sequence in $\Por$, denote by $\dot{W}_\Por(\bar{p})$ the $\Por$-name of $\{n<\omega:p_n\in\dot{G}\}$. When the forcing is understood from the context, we just write $\dot{W}(\bar{p})$.
    \item A set $Q\subseteq\Por$ is \emph{$F$-linked} if, for any sequence $\bar{p}=\la p_n:n<\omega\ra$ in $Q$, there exists a $q\in\Por$ that forces $\dot{W}(\bar{p})\in F^+$.
    \item A set $Q\subseteq\Por$ is \emph{ultrafilter-linked}, abbreviated \emph{uf-linked}, if $Q$ is $D$-linked for any non-principal ultrafilter $D$ on $\omega$.
    \item The poset $\Por$ is \emph{$\mu$-$F$-linked} if $\Por=\bigcup_{\alpha<\mu}P_\alpha$ for some sequence $\la P_\alpha:\alpha<\mu\ra$ of $F$-linked subsets of $\Por$.

    When each $P_\alpha$ is uf-linked, we say that
    $\Por$ is \emph{$\mu$-uf-linked}.%\footnote{In general, this notion is stronger than ``$\Por$ is $\mu$-$D$-linked for every non-principal ultrafilter $D$ on $\omega$".}

    When $\mu=\aleph_0$, we write \emph{$\sigma$-$F$-linked} and \emph{$\sigma$-uf-linked}.
    \item When $\kappa$ is an uncountable cardinal, say that $\Por$ is \emph{${<}\kappa$-$F$-linked} if it is $\mu$-$F$-linked for some infinite cardinal $\mu<\kappa$. Likewise, define \emph{${<}\kappa$-uf-linked}.
    \item The poset $\Por$ is \emph{$\mu$-$F$-Knaster} if any subset of $\Por$ of size $\mu$ contains an $F$-linked set of size $\mu$.

    Say that $\Por$ is \emph{$\mu$-uf-Knaster} if any subset of $\Por$ of size $\mu$ contains a uf-linked set of size $\mu$.\footnote{In general, this notion is stronger than ``$\Por$ is $\mu$-$D$-Knaster for every non-principal ultrafilter $D$ on $\omega$". Likewise for the notion of $\mu$-uf-linked.}
\end{enumerate}
\end{definition}

When $F\subseteq F'$ are free filters, it is clear that any $F'$-linked set is $F$-linked. In particular, a set is uf-linked iff it is $F$-linked for every free filter $F$. Though $\Fr$-linked is the weakest, and uf-linked is the strongest among these properties, they are equivalent for some posets.

\begin{lemma}[{\cite[Lemma~5.5]{mejiavert}}]\label{quasiuf}
Let $\Por$ be a poset.
  \begin{enumerate}[(a)]
     \item If $F$ is a free filter on $\omega$ generated by ${<}\pfrak$-many sets, then any subset of $\Por$ is $F$-linked iff it is $\Fr$-linked.
     \item If $\Por$ has $\pfrak$-cc then any subset of $\Por$ is uf-linked iff it is $\Fr$-linked.
  \end{enumerate}
\end{lemma}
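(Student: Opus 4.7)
My starting observation is that for any free filter $F$, the inclusion $\Fr\subseteq F$ makes $F$-positivity strictly stronger than $\Fr$-positivity (that is, infiniteness), so the implication ``$F$-linked $\Rightarrow$ $\Fr$-linked'' is automatic for every free $F$. Hence in both (a) and (b) the content lies in the converse, and the shared strategy will be to use the cardinal $\pfrak$ to replace a potentially large family of ``relevant'' members of $F$ (respectively, of an ultrafilter $D$) by a single infinite pseudo-intersection $B$, and then invoke $\Fr$-linkedness on the subsequence indexed by $B$.

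\textbf{Part (a).} Given $Q\subseteq\Por$ $\Fr$-linked and $F$ generated by $\mu<\pfrak$ many sets, I would first close the generating family under finite intersections -- which preserves the cardinality bound since $\mu$ must be infinite (else $F$ would be principal) -- to obtain $\{A_\alpha:\alpha<\mu\}$ downward directed. As $F$ is free this family has the strong finite intersection property, so by definition of $\pfrak$ there is an infinite $B$ with $B\subseteq^{*}A_\alpha$ for all $\alpha<\mu$. For any sequence $\bar p=\la p_n:n<\omega\ra$ in $Q$, $\Fr$-linkedness applied to the subsequence indexed by $B$ yields $q$ forcing $\dot W(\bar p)\cap B$ infinite. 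For each $A\in F$ pick $\alpha$ with $A_\alpha\subseteq A$; then $B\menos A$ is finite, so $q$ forces $\dot W(\bar p)\cap A$ to agree with the infinite set $\dot W(\bar p)\cap B$ modulo a finite set, in particular to be nonempty. Therefore $q\Vdash\dot W(\bar p)\in F^{+}$.

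\textbf{Part (b).} Here I would argue by contradiction. Assume $\Por$ has $\pfrak$-cc, $Q$ is $\Fr$-linked, and there is a non-principal ultrafilter $D$ and a sequence $\bar p=\la p_n:n<\omega\ra$ in $Q$ such that no condition forces $\dot W(\bar p)\in D^{+}$. Unpacking this, the set
\[E:=\{q'\in\Por:\exists A\in D\ \forall n\in A\ (q'\perp p_n)\}\]
is dense in $\Por$, because forcing $A\cap\dot W(\bar p)=\vacio$ for some $A\in D$ is equivalent to $q'$ being incompatible with every $p_n$ for $n\in A$. I would take a maximal antichain $\Awf\subseteq E$; density promotes it to a maximal antichain in $\Por$, so $|\Awf|<\pfrak$ by $\pfrak$-cc. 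Fixing witnesses $A_q\in D$ for $q\in\Awf$ produces a family of fewer than $\pfrak$ sets inside the free filter $D$, hence with SFIP, so there is a pseudo-intersection $B$ for it. Applying $\Fr$-linkedness to $\bar p\frestr B$ gives $q^{*}$ forcing $\dot W(\bar p)\cap B$ infinite; by maximality some $q\in\Awf$ is compatible with $q^{*}$, and any $r\leq q,q^{*}$ forces $\dot W(\bar p)\cap B\cap A_q$ to be infinite (since $B\subseteq^{*}A_q$), so some $r'\leq r$ satisfies $r'\leq p_n$ for some $n\in A_q$ -- contradicting $r'\leq q\perp p_n$.

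\textbf{Main obstacle.} The delicate point is (b): an ultrafilter has $\cfrak$ many sets and is typically not generated by $<\pfrak$ of them, so (a) does not apply directly. The crucial idea is that only the witnesses $A_q$ attached to a maximal antichain in the ``bad'' dense set $E$ need be handled, and $\pfrak$-cc squeezes that family to size $<\pfrak$, at which point the definition of $\pfrak$ finishes the job. Fitting together density of $E$, the chain condition, and the pseudo-intersection property of $\pfrak$ is the main conceptual move; everything else is bookkeeping.
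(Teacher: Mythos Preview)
Your proof is correct and follows essentially the same strategy as the paper's: reduce to a family of fewer than $\pfrak$ members of the filter, take a pseudo-intersection, and apply $\Fr$-linkedness to the induced subsequence. The only difference is organizational: the paper treats (a) and (b) in a single contradiction argument (a maximal antichain of conditions each forcing $\dot W(\bar p)$ to miss some $a_r\in F$, then a pseudo-intersection obtained either from the small generating set of $F$ or from the small antichain), whereas you give a direct proof of (a) and reserve the contradiction for (b); your direct handling of (a) is in fact a bit cleaner.
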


%Note that any free filter generated by $<\pfrak$-many sets is contained in a filter of the form $\{x\subseteq\omega: a\subseteq^* x\}$ for some infinite $a\subseteq\omega$. Hence, (a) is equivalent to the same fact for filters of the previous form.

\begin{proof}
   We repeat the proof for completeness. It is enough to show that, if $\Por$ is a poset and $F$ is a free filter on $\omega$ such that either $F$ is generated by ${<}\pfrak$-many sets or $\Por$ is $\pfrak$-cc, then any $\Fr$-linked subset of $\Por$ is $F$-linked. Towards a contradiction, assume that $Q\subseteq\Por$ is $\Fr$-linked but not $F$-linked, so there are a countable sequence $\la p_n:n<\omega\ra$ in $Q$, a maximal antichain $A\subseteq\Por$ and a sequence $\la a_r:r\in A\ra$ in $F$ such that each $r\in A$ is incompatible with $p_n$ for every $n\in a_r$. In any of the two cases of the hypothesis, it can be concluded that there is some pseudo-intersection $a\in[\omega]^{\aleph_0}$ of $\la a_r:r\in A\ra$. Hence each $r\in A$ forces $p_n\in\dot{G}$ for only finitely many $n\in a$, which means that $\Por$ forces the same. However, since $Q$ is $\Fr$-linked, there is some $q\in\Por$ that forces $\exists^\infty n\in a(p_n\in\dot{G})$, a contradiction.
\end{proof}

\begin{remark}\label{RemKnaster}
   Let $\Por$ be a poset and $\mu$ an infinite cardinal.
   \begin{enumerate}[(1)]
       \item Any $\Fr$-linked subset of $\Por$ cannot contain infinite antichains of $\Por$, that is, it is \emph{finite-cc}.\footnote{Say that $Q\subseteq\Por$ is \emph{finite-cc} if every antichain of $\Por$ contained in $Q$ is finite.}
       \item Any $\mu$-$\Fr$-linked poset is $\mu$-finite-cc (i.e., the union of ${\leq}\mu$-many finite-cc sets). As ``finite-cc" is an absolute property for transitive models of $\thzfc$,\footnote{Let $\Por$ be a poset and $Q\subseteq\Por$. Consider the tree $T_Q\subseteq Q^{<\omega}$ defined by $t\in T_Q$ iff $\{t(k):k<|t|\}$ is an antichain of $\Por$. Note that $Q$ is finite-cc iff $T_Q$ does not have an infinite branch, which is an absolute property for transitive models of $\thzfc$.} ``$\mu$-finite-cc" is also absolute.

       \item Any $\mu$-$\Fr$-linked poset is $\mu^+$-$\Fr$-Knaster.
       \item By (1), if $\theta$ is an infinite cardinal then any $\theta$-$\Fr$-Knaster poset is $\theta$-finite-cc-Knaster (that is, any subset of the poset of size $\theta$ contains a finite-cc set of size $\theta$). Also, any $\theta$-finite-cc-Knaster poset has the $\theta$-Knaster property because, by Erd\H{o}s--Dushnik--Miller~\cite{DuMi}, every finite-cc set of size $\theta$ contains a linked set of the same size.% (see~\cite[Thm 2.4]{HornTarski}).
       \item It is clear that any singleton is uf-linked. Hence, any poset of size ${\leq}\mu$ is $\mu$-uf-linked.
       \item Assume that $\Por$ is a $\pfrak$-cc poset. In view of Lemma~\ref{quasiuf}, $\Por$ is $\mu$-$\Fr$-linked iff it is $\mu$-uf-linked. In the same way, $\Por$ is $\theta$-$\Fr$-Knaster iff it is $\theta$-uf-Knaster. Note that, for $\theta\leq\pfrak$, $\theta$-$\Fr$-Knaster implies $\theta$-Knaster (and hence $\pfrak$-cc) by (4). %for $\mu<\pfrak$ or $\theta\leq\pfrak$, each of the corresponding linkedness or Knaster notion implies $\pfrak$-Knaster.
   \end{enumerate}
\end{remark}

Now we look at instances of $\sigma$-uf-linked posets. The following result indicates that random forcing is $\sigma$-uf-linked.

\begin{lemma}
   Any complete Boolean algebra that admits a strictly-positive $\sigma$-additive measure is $\sigma$-uf-linked. In particular, any random algebra is $\sigma$-uf-linked.
\end{lemma}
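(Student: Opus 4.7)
The plan is to partition $\mathbb{B} \setminus \{0\}$ by measure and apply Lemma \ref{quasiuf}. Let $\mu$ be the strictly-positive $\sigma$-additive measure on the cBa $\mathbb{B}$. First I would note that $\mathbb{B}$ is ccc: any uncountable antichain would have to contain uncountably many elements of measure $\geq 1/(n+1)$ for some $n$, contradicting finite additivity of $\mu$ on any finite subantichain together with $\mu(1)<\infty$. In particular $\mathbb{B}$ has $\pfrak$-cc, so by Lemma \ref{quasiuf}(b) it suffices to exhibit a decomposition $\mathbb{B}\setminus\{0\}=\bigcup_{n<\omega}Q_n$ where each $Q_n$ is $\Fr$-linked.

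The natural candidate is $Q_n:=\{b\in\mathbb{B} : \mu(b)\geq 2^{-n}\}$, which clearly covers $\mathbb{B}\setminus\{0\}$ since $\mu$ is strictly positive. To verify that $Q_n$ is $\Fr$-linked, fix any sequence $\bar p=\la p_k:k<\omega\ra$ in $Q_n$ and consider the Borel--Cantelli witness
\[
q := \bigwedge_{m<\omega}\bigvee_{k\geq m} p_k.
\]
Setting $r_m:=\bigvee_{k\geq m}p_k$, the sequence $\la r_m\ra$ is decreasing and each $r_m\geq p_m$, hence $\mu(r_m)\geq 2^{-n}$. By $\sigma$-additivity (continuity from above, since $\mu(1)<\infty$), $\mu(q)=\lim_m \mu(r_m)\geq 2^{-n}>0$, so $q>0$. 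Finally, for any $m<\omega$ we have $q\leq r_m=\bigvee_{k\geq m}p_k$: by genericity (or equivalently, because if a condition $b\leq q$ were incompatible with every $p_k$ for $k\geq m$, then $b\leq\bigwedge_{k\geq m}\neg p_k=\neg r_m$, contradicting $b\leq q\leq r_m$), $q$ forces $\exists k\geq m(p_k\in\dot G)$. Hence $q\Vdash\dot W(\bar p)\in\Fr^+$, showing $Q_n$ is $\Fr$-linked, and by the ccc Lemma \ref{quasiuf}(b) uf-linked.

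The ``in particular'' statement is immediate: any random algebra carries Lebesgue measure as a strictly-positive $\sigma$-additive measure. I do not anticipate a real obstacle here; the only subtlety is justifying $\mu(q)\geq 2^{-n}$ for the uncountable-looking meet, which is handled by reducing to the countable decreasing meet $\bigwedge_m r_m$ and then applying $\sigma$-additivity.
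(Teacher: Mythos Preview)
Your proof is correct and follows the same approach as the paper: show the algebra is $\sigma$-$\Fr$-linked via the measure-level decomposition $Q_n=\{b:\mu(b)\geq 2^{-n}\}$ and then apply Lemma~\ref{quasiuf}(b). The paper simply cites \cite[Lemma~3.29]{mejiavert} for the $\sigma$-$\Fr$-linked step, whereas you have written out the standard Borel--Cantelli argument in full.
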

\begin{proof}
   Any such algebra is $\sigma$-$\Fr$-linked by~\cite[Lemma~3.29]{mejiavert}, so it is $\sigma$-uf-linked by Lemma~\ref{quasiuf}.
\end{proof}

%\Diego{I believe we can define ultrafilter limits for random forcing. Depending how we do it, we may change the definition of uniform sequence. In that case, we will not need fam limits (Def.~\ref{Deffamlim})}

%\begin{definition}
%   Let $\Awf$ be an algebra of subsets of $\omega$ that contains $[\omega]^{<\omega}$. A \emph{fam (finitely additive measure) on $\Awf$} is a function $D:\Awf\to[0,1]$ that satisfies
%   \begin{itemize}
%      \item[(i)] $D(a)=0$ if $a\in\Awf$ is a finite set
%      \item[(ii)] $D(a\cup b)=D(a)+D(b)$ if $a,b\in\Awf$ are disjoint.
%      \item[(iii)] $D(\omega)=1$.
%   \end{itemize}
%   We say that $D$ is a \emph{filter} when it is 0-1 valued. In addition, if $D$ is a 0-1 valued fam on $\Pwf(\omega)$ we say that it is an \emph{ultrafilter}.
%\end{definition}

We also show that any poset of the form $\Eor^h_b$ (see Definition~\ref{DefEDforcing}) is $\sigma$-uf-linked. This actually follows the idea of Miller's proof that $\Eor$ is $\Dbf$-good (see~\cite{Mi}, in fact, his proof indicates that $\Eor$ is $\sigma$-uf-linked). To see this, we use ultrafilter limits.

\begin{definition}\label{Deffamlim}
  Let $D$ be an ultrafilter on $\Pwf(\omega)$, $X$ a topological space. If $\bar{x}=\la x_n:n<\omega\ra$  is a sequence on $X$ and $x\in X$, we say that \emph{$\bar{x}$ $D$-converges to $x$} if, for every open neighborhood $U$ of $x$, $\{n<\omega:x_n\in U\}\in D$. Here, we also say that \emph{$x$ is a $D$-limit of $\bar{x}$}.
\end{definition}

Note that there is at most one $D$-limit for Hausdorff spaces. In this case, we denote by $\lim^D_n x_n$ the ultrafilter limit of $\bar{x}$. Existence can always be guaranteed from compactness.

\begin{lemma}\label{existsfamlim}
   If $X$ is a compact Hausdorff space and $D$ is an ultrafilter on $\omega$, then any countable sequence in $X$ has a unique ultrafilter limit.
\end{lemma}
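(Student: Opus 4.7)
The plan is to prove the two assertions separately: uniqueness from the Hausdorff property, and existence from compactness via the finite intersection property.

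For uniqueness, suppose towards a contradiction that $x \neq y$ are both $D$-limits of $\bar{x} = \langle x_n : n < \omega\rangle$. Since $X$ is Hausdorff, pick disjoint open sets $U \ni x$ and $V \ni y$. Then by definition of $D$-convergence, both $A := \{n < \omega : x_n \in U\}$ and $B := \{n < \omega : x_n \in V\}$ belong to $D$, so $A \cap B \in D$; but $U \cap V = \varnothing$ forces $A \cap B = \varnothing \notin D$, a contradiction.

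For existence, I would consider the family of closed sets
\[
\mathcal{F} := \{\overline{\{x_n : n \in A\}} : A \in D\}.
\]
Since $D$ is a filter, any finite subfamily corresponds to finitely many sets $A_0,\ldots,A_{k-1} \in D$ whose intersection $A_0\cap\cdots\cap A_{k-1}$ is again in $D$ and hence nonempty; the closure of $\{x_n : n \in A_0\cap\cdots\cap A_{k-1}\}$ witnesses that $\mathcal{F}$ has the finite intersection property. By compactness of $X$, there exists $x \in \bigcap \mathcal{F}$.

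The key step is to verify that any such $x$ is a $D$-limit of $\bar{x}$. Let $U$ be an open neighborhood of $x$; it suffices to show $\{n : x_n \in U\} \in D$. Suppose not; then, since $D$ is an \emph{ultrafilter}, the complement $B := \{n : x_n \notin U\}$ lies in $D$. Thus $x \in \overline{\{x_n : n \in B\}} \subseteq \overline{X \smallsetminus U} = X \smallsetminus U$ (the last equality because $X\smallsetminus U$ is closed), contradicting $x \in U$. This is the only place the ultrafilter property (as opposed to a mere filter) is used, and it is the crucial ingredient that makes the argument go through. No real obstacle is expected; the proof is entirely standard general topology.
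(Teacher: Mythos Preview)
Your proof is correct and complete. The paper itself does not supply a proof of this lemma; it only remarks beforehand that uniqueness follows from the Hausdorff property and existence from compactness, treating the result as folklore. Your argument fleshes out exactly that outline in the standard way, so there is nothing to compare.
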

\begin{proof}
  Towards a contradiction, assume that there is a sequence $\la x_n:n<\omega\ra$ on $X$ without $D$-limit. So, for any $x\in X$, there is some open neighborhood $U_x$ of $x$ such that $a_x:=\{n<\omega : x_n\notin U_x\}\in D$. By compactness, there is some finite $F\subseteq X$ such that $\bigcup_{x\in F}U_x=X$. On the other hand, $\bigcap_{x\in F}a_x\in D$, but $\bigcap_{x\in F}a_x=\{n<\omega : x_n\notin X\}=\emptyset$, a contradiction.
\end{proof}

%This Lemma above follows from its special case $X=[0,1]$, as a general space $X$ can be embedded into a space of the form $[0,1]^X$.

\begin{example}\label{Exmplim}
   Recall from Definition~\ref{Defrandom} that $\lambda^*$ denotes the Lebesgue measure on $2^\omega$.
   \begin{enumerate}[(1)]
       \item Given a real $\delta\in(0,1)$ the set $\Bor_{\geq\delta}:=\{T\in\Bor:\lambda^*([T])\geq\delta\}$ is a compact subspace of $2^{2^{<\omega}}$ (with the Cantor-space topology).
       %In particular, for each $(s,m)\in2^{<\omega}\times\omega$, $\Bor(s,m)$ is a compact subspace.
       Therefore, every sequence in $\Bor_{\geq\delta}$ has its $D$-limit in $\Bor_{\geq\delta}$ for every ultrafilter $D$ on $\Pwf(\omega)$. Even more, if $\bar{p}=\la p_n:n<\omega\ra$ is a sequence in $\Bor_{\geq\delta}$, then $q=\lim^D_n p_n$ satisfies that, for any $t\in2^{<\omega}$, $t\in q$ iff $\{n<\omega: t\in p_n\}\in D$.

       \item Fix $b,h$ as in Definition~\ref{DefEDforcing}. Note that, for any $h'\in\omega^\omega$, $\Swf(b,h')$ is a compact subspace of $\Pwf(\omega)^\omega$ (with the product topology where $\Pwf(\omega)$ is the Cantor space) so, for any $m<\omega$, $\Swf(b,m\cdot h)$ is a compact space. Therefore, if $D$ is an ultrafilter on $\omega$, $s\in\seq_{<\omega}(b)$ and $\bar{p}=\la p_n:n<\omega\ra$, $p_n=(s,\varphi_n)$, is a sequence in $\Eor^h_b(s,m)$, the sequence $\la\varphi_n:n<\omega\ra$ has its $D$-limit $\varphi$ in $\Swf(b,m\cdot h)$ for any ultrafilter $D$ on $\Pwf(\omega)$. In this case, we say that \emph{$\lim^D_n p_n:=(s,\varphi)$ is the $D$-limit of $\bar{p}$}. Note that, for any $k<\omega$, $k\in\varphi(i)$ iff $\{n<\omega: k\in\varphi_n(i)\}\in D$.
   \end{enumerate}
\end{example}

\begin{lemma}\label{EDuflim}
   Let $D$ be a non-principal ultrafilter on $\Pwf(\omega)$ and $b,h$ as in Definition~\ref{DefEDforcing}. If $G$ is $\Eor^h_b$-generic over $V$ then, in $V[G]$, $D$ can be extended to an ultrafilter $D^*$ on $\Pwf(\omega)\cap V[G]$ such that, for any $(s,m)\in\seq_{<\omega}(b)\times\omega$ and any sequence $\bar{p}\in\Eor_b^h(s,m)\cap V$ that has its $D$-limit in $G$, $\dot{W}(\bar{p})[G]\in D^*$.

   In particular, $\Eor^h_b$ is $\sigma$-uf-linked.
\end{lemma}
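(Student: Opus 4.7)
The plan is to exploit the ultrafilter limits from Example~\ref{Exmplim}(2). My central combinatorial claim will be: \emph{for any $(s,m)\in\seq_{<\omega}(b)\times\omega$ and any sequence $\bar{p}=\la p_n=(s,\varphi_n):n<\omega\ra$ in $\Eor^h_b(s,m)$, the $D$-limit $q=(s,\varphi):=\lim^D_n p_n$ forces $a\cap\dot{W}(\bar{p})$ to be infinite for every $a\in D$}. Granting this, each $\Eor^h_b(s,m)$ is $D$-linked for every non-principal ultrafilter $D$, and since $\Eor^h_b=\bigcup_{(s,m)}\Eor^h_b(s,m)$ is a countable decomposition, the ``in particular'' assertion that $\Eor^h_b$ is $\sigma$-uf-linked follows at once.

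The claim is proved by a density argument. Fix $r=(t,\psi)\leq q$ in $\Eor^h_b$ and $a\in D$. From $r\leq q$ we have $s\subseteq t$, $\psi\supseteq\varphi$ pointwise, and $t(i)\notin\varphi(i)$ for every $i\in|t|\setminus|s|$. By definition of the ultrafilter limit, $t(i)\notin\varphi(i)$ translates into $\{n:t(i)\in\varphi_n(i)\}\notin D$, so its complement lies in $D$. Intersecting these finitely many complements with $a$ and with an arbitrary cofinite tail $\{n:n>N\}$ produces an infinite set $A\in D$ such that, for each $n\in A$, the pair $(t,\psi\cup\varphi_n)$ is a legitimate condition in $\Eor^h_b$ extending both $r$ and $p_n$ (after, if necessary, harmlessly lengthening the stem $t$ to absorb the enlarged slalom bound $m'=m_r+m$, which is available because $h(i)/b(i)\to 0$). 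Density thus gives $q\Vdash |a\cap\dot{W}(\bar{p})|=\aleph_0$.

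For the main statement, I would verify in $V[G]$ that
\[ D\cup\{\dot{W}(\bar{p})[G]:\bar{p}\in V\text{ is uniform and }\lim^D_n p_n\in G\} \]
has the finite intersection property, and then apply Zorn's lemma to obtain the desired ultrafilter $D^*$. Given finitely many uniform sequences $\bar{p}^1,\dots,\bar{p}^\ell$ with $D$-limits $q^j\in G$ and any $a\in D$, pick $r=(t,\psi)\in G$ below every $q^j$ and run the previous argument simultaneously: the set of $n$ for which $t(i)\notin\varphi_n^j(i)$ holds for every $j\leq\ell$ and every $i\in|t|\setminus|s^j|$ is a finite intersection of members of $D$, hence in $D$, and for each such $n$ the condition $(t,\psi\cup\bigcup_{j\leq\ell}\varphi_n^j)$ is a common extension of $r$ and all the $p_n^j$. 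Therefore $r\Vdash a\cap\bigcap_j\dot{W}(\bar{p}^j)\neq\emptyset$, which yields FIP in $V[G]$. The only real obstacle I anticipate is the bookkeeping for this merged common extension: one must check that the combined slalom still fits within some $\Swf(b,m'\cdot h)$ satisfying $m'\cdot h(i)<b(i)$ on the tail, but this is exactly what the definition of $\Eor^h_b$ together with $\lim_i h(i)/b(i)=0$ guarantees once the stem is lengthened enough.
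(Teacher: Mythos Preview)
Your proposal is correct and follows essentially the same route as the paper: the paper states a single claim covering finitely many sequences $\bar{p}^k$ simultaneously (your FIP step), from which both the main statement and the $\sigma$-uf-linkedness follow, and the verification is exactly the density argument you describe via ultrafilter limits. One small point of care: the stem must be lengthened \emph{before} selecting $n$, not after, since extending $t$ to $t'$ creates new coordinates $i\in|t'|\smallsetminus|t|$ on which the constraint $t'(i)\notin\varphi_n(i)$ must also hold; the paper handles this by first strengthening $q'$ so that $m^*\cdot h(i)<b(i)$ for all $i\geq|t|$ (where $m^*=m_{q'}+\sum_k m_k$), and only then choosing $n$ from the relevant $D$-set.
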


This lemma is a direct consequence of the following claim in $V$.
   \begin{clm}
      Assume $N<\omega$, $\{(s_k,m_k):k<N\}\subseteq\seq_{<\omega}(b)\times\omega$, $\{\bar{p}^k:k<N\}$ such that each $\bar{p}^k=\la p_{k,n}:n<\omega\ra$ is a sequence in $\Eor^h_b(s_k,m_k)$, $q_k$ is the $D$-limit of $\bar{p}^k$ for each $k<N$, and $q\in\Eor^h_b$ is stronger than every $q_k$. If $a\in D$ then $q$ forces that $a\cap\bigcap_{k<N}\dot{W}(\bar{p}^k)\neq\emptyset$.
   \end{clm}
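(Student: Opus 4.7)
\medskip

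\noindent\textbf{Plan of proof.} It suffices to show that for every $r \le q$ in $\Eor^h_b$ and every $a\in D$ one can find $n\in a$ and $r'\le r$ such that $r'\le p_{k,n}$ for all $k<N$; then $q$ forces $a\cap\bigcap_{k<N}\dot W(\bar p^k)\neq\emptyset$ since $a$ and $r$ were arbitrary. Write $q=(t,\psi)$, $q_k=(s_k,\varphi_k)$ where $\varphi_k=\lim^D_n\varphi_{k,n}$, and $r=(u,\rho)$ with width parameter $m_r$.

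\emph{Step 1 (fattening $r$).} Set $M':=m_r+\sum_{k<N}m_k$. Since $h(i)/b(i)\to 0$, there is $I<\omega$ with $M'\cdot h(i)<b(i)$ for all $i\ge I$. Using $m_r\cdot h(i)<b(i)$ for $i\ge|u|$, extend $r$ one coordinate at a time by choosing values in $b(i)\setminus\rho(i)$, so we may assume $|u|\ge I$.

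\emph{Step 2 (using the $D$-limit property).} For each $k<N$ and each $i\in[|s_k|,|u|)$ put
\[
A_{k,i}:=\{n<\omega: u(i)\notin\varphi_{k,n}(i)\}.
\]
I claim $A_{k,i}\in D$. Indeed, if $i\in[|s_k|,|t|)$ then $u(i)=t(i)$ and $q\le q_k$ gives $t(i)\notin\varphi_k(i)$; if $i\in[|t|,|u|)$ then $r\le q$ gives $u(i)\notin\psi(i)$, and $q\le q_k$ gives $\varphi_k(i)\subseteq\psi(i)$, so again $u(i)\notin\varphi_k(i)$. In both cases the characterization of the $D$-limit (Example~\ref{Exmplim}(2)) yields $\{n:u(i)\in\varphi_{k,n}(i)\}\notin D$, whence $A_{k,i}\in D$ since $D$ is an ultrafilter.

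\emph{Step 3 (the witness).} As there are only finitely many pairs $(k,i)$, the intersection $A:=a\cap\bigcap_{k,i}A_{k,i}$ lies in $D$ and is nonempty. Pick any $n\in A$ and define $\rho'(i):=\rho(i)\cup\bigcup_{k<N}\varphi_{k,n}(i)$. Then $|\rho'(i)|\le M'\cdot h(i)$, and by Step~1, $M'\cdot h(i)<b(i)$ for $i\ge|u|$, so $r':=(u,\rho')$ is a legitimate condition in $\Eor^h_b$ with width parameter $M'$. By construction $r'\le r$, and for each $k<N$ we have $s_k\subseteq u$, $\varphi_{k,n}(i)\subseteq\rho'(i)$ for all $i$, and $u(i)\notin\varphi_{k,n}(i)$ on $[|s_k|,|u|)$ by the choice of $n\in A_{k,i}$; hence $r'\le p_{k,n}$.

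\emph{Deducing the lemma.} Partition $\Eor^h_b=\bigcup_{(s,m)}\Eor^h_b(s,m)$ (countable union). Given any $\la p_n\ra$ in some $\Eor^h_b(s,m)$ and any non-principal ultrafilter $D$, Lemma~\ref{existsfamlim} and Example~\ref{Exmplim}(2) give $q_0=\lim^D_n p_n\in\Eor^h_b(s,m)$; the claim with $N=1$ and $q=q_0$ shows $q_0\Vdash\dot W(\bar p)\in D^+$, so each $\Eor^h_b(s,m)$ is uf-linked, and $\Eor^h_b$ is $\sigma$-uf-linked. For the extension $D^*$, it suffices to verify that in $V[G]$ the family $D\cup\{\dot W(\bar p)[G]:\bar p\in V\text{ uniform with }\lim^D\bar p\in G\}$ has the finite intersection property: given $a_1,\dots,a_l\in D$ and sequences $\bar p^1,\dots,\bar p^N$ whose $D$-limits lie in $G$, pick $q\in G$ below all the $q_k$; the claim applied to $a:=a_1\cap\cdots\cap a_l\in D$ gives the required nonempty intersection.

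\medskip

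\noindent\textbf{Main obstacle.} The only subtle point is ensuring that the candidate $r'=(u,\rho')$ is a legal condition, i.e.\ that the enlarged slalom $\rho'$ still fits under $b$ beyond $|u|$. This is handled by pre-extending $r$ so that $|u|\ge I$, which is possible precisely because $h(i)/b(i)\to 0$; without this step the slalom widths of the $p_{k,n}$'s could exceed $b$ on the segment $[|u|,I)$.
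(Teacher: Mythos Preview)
Your proof is correct and follows essentially the same strategy as the paper's: strengthen the arbitrary condition below $q$ so that the combined width parameter fits, use the $D$-limit characterization to obtain a set in $D$ of indices $n$ for which the stem avoids each $\varphi_{k,n}$ on the relevant interval, intersect with $a$, and glue the slaloms. One minor simplification: your case split in Step~2 is unnecessary, since $r\le q\le q_k$ gives $r\le q_k$ directly, and then the definition of the order yields $u(i)\notin\varphi_k(i)$ for all $i\in[|s_k|,|u|)$ in one stroke (this is how the paper phrases it, via the open neighbourhood $U_k$ of $\varphi_k$).
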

   \begin{proof}
      We can express the forcing conditions as $p_{k,n}=(s_k,\varphi_{k,n})$, $q_k=(s_k,\varphi_k)$ where each $\varphi_k$ is the $D$-limit of $\la\varphi_{k,n}:n<\omega\ra$ in $\Swf(b,m_k\cdot h)$. Assume that $q'=(t,\psi)\leq q$ in $\Eor^h_b$. Wlog, by making $q'$ stronger, we can assume that $m^*\cdot h(i)<b(i)$ for any $i\geq|t|$ where $m^*:=m_{q'}+\sum_{k<N}m_k$. Note that $U_k:=\{\varphi\in\Swf(b,m_k\cdot h):\forall i\in|t|\menos|s_k|(t(i)\notin\varphi(i))\}$ is an open neighborhood of $\varphi_k$ in $\Swf(b,m_k\cdot h)$, so $\{n<\omega:\forall i\in|t|\menos|s_k|(t(i)\notin\varphi_{k,n}(i))\}\in D$. Hence $a\cap\bigcap_{k<N}\{n<\omega:\forall i\in|t|\menos|s_k|(t(i)\notin\varphi_{k,n}(i))\}$
      is non-empty. Choose an $n$ in that set and put $r:=(t,\psi')$ where $\psi'(i):=\psi(i)\cup\bigcup_{k<N}\varphi_{k,n}(i)$. This is a condition in $\Eor^h_b$ because $|\psi'(i)|\leq m^*\cdot h(i)$ for every $i<\omega$, and $m^*\cdot h(i)<b(i)$ for $i\geq|t|$. Moreover, $r$ is stronger than $q'$ and $p_{n,k}$ for any $k<N$, so it forces $n\in a\cap\bigcap_{k<N}\dot{W}(\bar{p}^k)$.
   \end{proof}

\begin{remark}\label{Remnorandomlim}
   %The version of Lemma~\ref{EDuflim} for random forcing, with the ultrafilter-limit of Example~\ref{Exmplim}(1), does not hold.
   By Lemma~\ref{EDuflim}, if $D$ is a non-principal ultrafilter on $\omega$ and $\bar{p}$ is a countable sequence in $\Eor^h_b(s,m)$ then its $D$-limit forces that $\dot{W}(\bar{p})$ is infinite. However, %concerning Example~\ref{Exmplim}(1), when $D$ is a non-principal ultrafilter on $\omega$, $\delta>0$, $\bar{p}=\la p_n:n<\omega\ra$ is a sequence in $\Bor_{\geq\delta}$ and $q$ is the $D$-limit of $\bar{p}$, $q$ may not force that $\dot{W}(\bar{p})$ is infinite.
   this may not be true for $\Bor_{\geq\delta}$ of Example~\ref{Exmplim}(1).
   For instance, let $0<k<\omega$, $\delta:=1-2^{-k}$ and let $\la I_n:n<\omega\ra$ be an interval partition of $[k,\omega)$ such that $\sum_{n<\omega}2^{-|I_n|}<1$. For each $n<\omega$ define
   \[p_n:=\{t\in 2^{<\omega}:\text{if $|t|\geq k$ and $t(i)=0$ for all $i<k$, then $t(i)=0$ for all $i\in I_n\cap|t|$.}\}\]
   It can be shown that $p_n\in\Bor_{\geq\delta}$ and that $q:=2^{<\omega}$ is the $D$-limit of $\bar{p}:=\la p_n:n<\omega\ra$. As $\lambda^*([p_n])=1-2^{-k}+2^{-k-|I_n|}$, $\lambda^*(\bigcup_{n<\omega}[p_n])\leq1-2^{-k}+2^{-k}\sum_{n<\omega}2^{-|I_n|}<1$, so $[q]\menos\bigcup_{n<\omega}[p_n]$ has positive measure. Hence, there is an $r\in\Bor$ such that $[r]\cap\bigcup_{n<\omega}[p_n]=\emptyset$, so $r$ forces that $\dot{W}(\bar{p})=\emptyset$.
\end{remark}

\subsection{Preservation of strongly unbounded families and of mad families}

Linkedness and Knaster notions associated with filters actually work to preserve strongly $\Dbf$-unbounded families and certain type of mad families (see Definition~\ref{DefMd}). In~\cite{mejiavert} it was proved that $\mu$-$\Fr$-linked posets satisfy stronger properties than these type of preservation, for instance,

\begin{theorem}[{\cite[Thm.~3.30]{mejiavert}}]\label{ufextGood}
   Any $\mu$-$\Fr$-linked poset is $\mu^+$-$\mathbf{D}$-good. In particular, it preserves all the strongly $\kappa$-$\Dbf$-unbounded families from the ground model for any regular $\kappa\geq\mu^+$.
\end{theorem}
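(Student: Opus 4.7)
Write $\Por=\bigcup_{\alpha<\mu}P_\alpha$ with each $P_\alpha$ $\Fr$-linked, and let $\dot{h}$ be a $\Por$-name for a member of $\omega^\omega$. The goal is to produce a set $H\subseteq\omega^\omega$ of cardinality $\leq\mu$ such that every $x\in\omega^\omega$ which is $\Dbf$-unbounded over $H$ is forced by $\Por$ to satisfy $x\not\leq^*\dot{h}$; then $\Por$ is $\mu^+$-$\Dbf$-good by definition. The ``in particular'' clause is then immediate from the general preservation theorem (item (I) in the paper): any $\kappa$-$\Dbf$-good poset preserves all strongly $\kappa$-$\Dbf$-unbounded families, and $\mu^+$-good implies $\kappa$-good for all regular $\kappa\geq\mu^+$.

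The heart of the argument is the definition of $H$, for which I would exploit $\Fr$-linkedness as an ``averaging'' principle that forbids conditions in $P_\alpha$ from forcing arbitrarily large values for $\dot{h}(n)$. Fix $\alpha<\mu$ and $n<\omega$ and consider, for each $k<\omega$, the set
\[
D_{n,k}:=\{q'\in\Por:q'\Vdash\dot{h}(n)\leq k\}.
\]
I claim that there is a least natural number $k^*$ such that $D_{n,k^*}$ is dense below every element of $P_\alpha$. Otherwise, for every $k$ one could choose $q_k\in P_\alpha$ with no extension in $D_{n,k}$; by density of deciding conditions, $q_k\Vdash\dot{h}(n)>k$. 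Applying $\Fr$-linkedness to $\bar{q}=\la q_k:k<\omega\ra$ yields some $r\in\Por$ forcing $\dot{W}(\bar{q})$ to be infinite, and in the generic extension $\dot{h}(n)$ would exceed $k$ for infinitely many $k$, contradicting that $\dot{h}(n)$ is forced to be a natural number. Set $h_\alpha(n)$ to be this minimal $k^*$, and let $H:=\{h_\alpha:\alpha<\mu\}$, so $|H|\leq\mu$.

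To verify that $H$ witnesses goodness, suppose $x\in\omega^\omega$ satisfies $x\not\leq^* h_\alpha$ for every $\alpha<\mu$. Fix arbitrary $q\in\Por$ and $m<\omega$; it suffices to produce $n>m$ and $q'\leq q$ with $q'\Vdash\dot{h}(n)<x(n)$, since this will show $\Vdash\exists^\infty n(\dot{h}(n)<x(n))$, i.e.\ $\Vdash x\not\leq^*\dot{h}$. Choose any $\alpha$ with $q\in P_\alpha$. Since $x\not\leq^* h_\alpha$, the set $\{n<\omega:x(n)>h_\alpha(n)\}$ is infinite, so it contains some $n>m$. By the defining property of $h_\alpha(n)$, the set $D_{n,h_\alpha(n)}$ is dense below $q$, so pick $q'\leq q$ with $q'\Vdash\dot{h}(n)\leq h_\alpha(n)<x(n)$, as required.

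The only genuinely subtle step is the $\Fr$-linked contradiction that bounds the forced values of $\dot{h}(n)$ over $P_\alpha$; once the correct definition of $h_\alpha$ is isolated, everything else is bookkeeping. The rest of the theorem then follows by the standard argument: given a strongly $\kappa$-$\Dbf$-unbounded family $F$ in the ground model and a name $\dot{h}$, apply goodness to obtain $H$ of size $\leq\mu$; then $\{x\in F:x\leq^* h\text{ for some }h\in H\}$ has cardinality $<\kappa$ (being a union of $\leq\mu<\kappa$ sets of size $<\kappa$, with $\kappa$ regular), and all remaining members of $F$ are forced to be $\not\leq^*\dot{h}$, showing that $F$ remains strongly $\kappa$-$\Dbf$-unbounded in the extension.
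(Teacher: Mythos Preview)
Your argument is correct and is precisely the standard proof; the paper itself does not give a proof but simply cites \cite[Thm.~3.30]{mejiavert}, so there is nothing to compare against. One small wording issue: you state the claim as ``$D_{n,k^*}$ is dense below every element of $P_\alpha$'', but your negation (``$q_k\in P_\alpha$ with no extension in $D_{n,k}$'') is actually the negation of the weaker statement ``every $q\in P_\alpha$ has an extension in $D_{n,k^*}$''; since that weaker statement is all you use in the verification, the argument goes through unchanged.
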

%\begin{proof}
%   Let $\Por$ be a poset. Assume that $\la P_i:i\in I\ra$ is an uf-extendable representation of $\Por$ with $|I|<\kappa$. Fix a $\Por$-name $\dot{h}$ for a real in $\omega^\omega$. For each $i\in I$ define $h_i:\omega\to\omega+1$ by
%   \[h_i(n):=\min\{l\leq\omega:\forall p\in P_i(p\nVdash l\leq\dot{h}(n))\}.\]

%   We first show that $h_i(n)<\omega$ for all $n<\omega$. If not, for each $l<\omega$ choose a $p_l\in P_i$ such that $p_l\Vdash l\leq\dot{h}(n)$. As there is a $D$-fusion $q$ of $\la p_l:l<\omega\ra$, $q$ forces that $l\leq\dot{h}(n)$ for infinitely many $l<\omega$, which is a contradiction.

%   Put $H:=\{h_i:i\in I\}$. Suppose that $x\in\omega^\omega$ is unbounded over $H$ and $p\in\Por$. There exists an $i\in I$ and a $p_0\leq p$ in $P_i$ so, for any $n<\omega$ such that $h_i(n)<x(n)$, there is a $p_1\leq p_0$ such that $p_1\Vdash\dot{h}(n)<h_i(n)<x(n)$. As there are infinitely many such $n$, we can conclude that $\Vdash x\nleq^*\dot{h}$.
%\end{proof}

The preservation of strongly unbounded families via Frechet-Knaster posets actually generalizes~\cite[Main Lemma~4.6]{GMS}.

\begin{theorem}\label{FrKnasterpresunb}
   If $\kappa$ is an uncountable regular cardinal then any $\kappa$-$\Fr$-Knaster poset preserves all the strongly $\kappa$-$\Dbf$-unbounded families from the ground model.
\end{theorem}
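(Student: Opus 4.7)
The plan is a proof by contradiction, combining pigeonhole with a Knaster extraction and a clean dichotomy on the coordinate projections of the extracted subfamily. Suppose $\Por$ is $\kappa$-$\Fr$-Knaster, $F$ is a strongly $\kappa$-$\Dbf$-unbounded family in $V$, and some $p\in\Por$ forces that a name $\dot{y}$ for a real in $\omega^\omega$ satisfies $|\{x\in F:x\leq^*\dot{y}\}|\geq\kappa$. For each such $x$ some extension of $p$ must already force $x\leq^*\dot{y}$, so setting $Y:=\{x\in F:\exists p_x\leq p\,(p_x\Vdash x\leq^*\dot{y})\}$ and pinning down a witness $p_x$ for each $x\in Y$ yields $p\Vdash\{x\in F:x\leq^*\dot{y}\}\subseteq Y$. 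Since $\kappa$-$\Fr$-Knaster implies $\kappa$-cc (Remark~\ref{RemKnaster}), $\kappa$ is preserved, and this inclusion forces $|Y|\geq\kappa$ already in $V$.

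Each $p_x$ forces $\forall k\geq n_x(x(k)\leq\dot{y}(k))$ for some $n_x<\omega$, so regularity of $\kappa$ produces $Y_0\subseteq Y$ of size $\kappa$ with a common threshold $n^*$. Applying the $\kappa$-$\Fr$-Knaster property to $\{p_x:x\in Y_0\}$, I extract a further $Y_1\subseteq Y_0$ of size $\kappa$ for which $\{p_x:x\in Y_1\}$ is $\Fr$-linked in $\Por$. Now bifurcate according to whether, for every $k\geq n^*$, the set $\{x(k):x\in Y_1\}\subseteq\omega$ is finite. In the \emph{bounded} alternative, define $y^*\in V\cap\omega^\omega$ by $y^*(k):=\max\{x(k):x\in Y_1\}$ for $k\geq n^*$ and $y^*(k):=0$ otherwise; then $x\leq^*y^*$ for every $x\in Y_1$, yielding $|\{x\in F:x\leq^*y^*\}|\geq|Y_1|=\kappa$, contradicting the strong $\kappa$-$\Dbf$-unboundedness of $F$.

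In the \emph{unbounded} alternative, pick $k\geq n^*$ and distinct $x_n\in Y_1$ with $x_n(k)>n$ for all $n<\omega$, and invoke the $\Fr$-linkedness of $\{p_{x_n}:n<\omega\}$ to obtain $q\in\Por$ forcing $\dot{W}(\la p_{x_n}:n<\omega\ra)$ to be infinite. Then for each $n$ with $p_{x_n}\in\dot{G}$ one has $\dot{y}(k)\geq x_n(k)>n$, so $q$ forces $\dot{y}(k)$ to exceed every natural number, which is impossible. The delicate step of the plan is to secure the uniform threshold $n^*$ \emph{before} performing the Knaster extraction, since otherwise neither alternative would translate directly into an immediate contradiction: both the ground-model bound $y^*$ in the bounded case and the blow-up of $\dot{y}(k)$ in the unbounded case depend crucially on every $p_x$ acting from the same index $n^*$ onward.
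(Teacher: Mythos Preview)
Your proof is correct and follows essentially the same approach as the paper's: both obtain $\kappa$-many conditions $p_x\leq p$ with a uniform threshold, extract an $\Fr$-linked subfamily via the Knaster property, and then use that a bounded family would contradict strong $\kappa$-$\Dbf$-unboundedness to find a coordinate with infinite projection and derive a contradiction from $\Fr$-linkedness. The only cosmetic difference is that you spell out the bounded/unbounded dichotomy explicitly, whereas the paper dispatches the bounded case in a parenthetical remark.
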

\begin{proof}
   Let $\Por$ be a $\kappa$-$\Fr$-Knaster poset and let $F\subseteq\omega^\omega$ be a strongly $\kappa$-$\Dbf$-unbounded family in the ground model. Towards a contradiction, assume that there is a $\Por$-name $\dot{h}$ of a real in $\omega^\omega$ and a $p\in\Por$ such that $p\Vdash|\{x\in F:x\leq^*\dot{h}\}|\geq\kappa$.
   Find $F'\subseteq F$ of size $\kappa$, a family of conditions $\{p_x:x\in F'\}\subseteq\Por$ and a natural number $m$ such that, for each $x\in F'$, $p_x\leq p$ and $p_x\Vdash\forall n\geq m(x(n)\leq\dot{h}(n))$. As $\Por$ is $\kappa$-$\Fr$-Knaster, there is some $F''\subseteq F'$ of size $\kappa$ such that $\{p_x:x\in F''\}$ is $\Fr$-linked.

   Note that there is a $j\geq m$ such that the set $\{x(j):x\in F''\}$ is infinite. (otherwise $F''$ would be bounded, which contradicts that $F$ is strongly $\kappa$-$\Dbf$-unbounded). Choose $\{x_n:n<\omega\}\subseteq F''$ such that $x_n(j)\neq x_{n'}(j)$ whenever $n\neq n'$. For each $n<\omega$, put $p_n:=p_{x_n}$. As $\bar{p}=\la p_n:n<\omega\ra$ is a sequence in a $\Fr$-linked set, there is a condition $q\in\Por$ such that $q\Vdash$``$\dot{W}(\bar{p})$ is infinite". Therefore, $q$ forces that $\exists^\infty n<\omega(x_n(j)\leq\dot{h}(j))$, which is a contradiction.
\end{proof}

We now turn to preservation of mad families. The relational system defined below is inspired by~\cite{BrF}.

\begin{definition}\label{DefMd}
  Fix $A\subseteq[\omega]^{\aleph_0}$.
  \begin{enumerate}[(1)]
     \item Let $P\subseteq\big[[\omega]^{\aleph_0}\big]^{<\aleph_0}$. For $x\subseteq\omega$ and $h:\omega\times P\to\omega$, define $x\sqsubset^* h$ by
         \[\forall^\infty n<\omega\forall F\in P([n,h(n,F))\menos\bigcup F\nsubseteq x).\]
     \item Define the relational system $\Md(A):=\la[\omega]^{\aleph_0},\omega^{\omega\times[A]^{<\aleph_0}},\sqsubset^*\ra$.
     \item If $\kappa$ is an infinite cardinal, say that $A$ is a \emph{$\kappa$-strong-$\Md$ family} if $A$ is strongly $\kappa$-$\Md(A)$-unbounded. When $\kappa=\aleph_1$ we just say \emph{strong-$\Md$ family}.
  \end{enumerate}
\end{definition}

Denote $\Iwf(A):=\{w\subseteq\omega:\exists F\in[A]^{<\aleph_0}(w\subseteq^*\bigcup F)\}$. For $y\in[\omega]^{\aleph_0}\menos\Iwf(A)$ we can define a function $h_y:\omega\times[A]^{<\aleph_0}\to\omega$ such that, for every $n<\omega$ and $F\subseteq A$ finite, $y\cap [n,h_y(n,F))\menos\bigcup F\neq\emptyset$. Hence, if $x\in[\omega]^{\aleph_0}$ and $x\not\sqsubset^* h_y$ then $x\cap y$ is infinite. This actually proves the following result.

\begin{lemma}[{\cite[Lemma~3]{BrF}}]\label{DiagMain}
   Let $M$ be a transitive model of $\thzfc$ with $A\in M$. If $a^*\in[\omega]^{\aleph_0}$ is $\Md(A)$-unbounded over $M$ then $|a^*\cap y|=\aleph_0$ for any $y\in [\omega]^{\aleph_0}\cap M\menos\Iwf(A)$.% where $\Iwf(A):=\{y\subseteq\omega:\exists F\in[A]^{<\aleph_0}(y\subseteq^*\bigcup F)\}$.
\end{lemma}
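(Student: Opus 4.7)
The plan is to argue the contrapositive: assuming there exists $x\in[\omega]^{\aleph_0}\cap M\menos\Iwf(A)$ with $|a^*\cap x|<\aleph_0$, I will construct in $M$ a function $h\in\omega^{\omega\times[A]^{<\aleph_0}}$ such that $a^*\sqsubset^* h$, thereby contradicting the assumption that $a^*$ is $\Md(A)$-unbounded over $M$.

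Working inside $M$, I use the hypothesis $x\notin\Iwf(A)$, which by the very definition of $\Iwf(A)$ says that for every $F\in[A]^{<\aleph_0}$ the set $x\menos\bigcup F$ is infinite. This allows me to define, in $M$,
\[
h(n,F):=\min\{k\in x\menos\bigcup F:k\geq n\}+1,
\]
which is total on $\omega\times[A]^{<\aleph_0}$. Note the choice of $h$ uses only $x$ and $A$, both of which are in $M$, so indeed $h\in M$.

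Now I leave $M$ and use the hypothesis that $a^*\cap x$ is finite to pick $n_0<\omega$ with $a^*\cap x\subseteq n_0$. For any $n\geq n_0$ and any $F\in[A]^{<\aleph_0}$, set $k:=h(n,F)-1$. By construction $k\in x\menos\bigcup F$ and $k\geq n\geq n_0$; since $k\in x$ and $k\geq n_0$, we get $k\notin a^*$. Therefore $k\in[n,h(n,F))\menos\bigcup F$ but $k\notin a^*$, which gives $[n,h(n,F))\menos\bigcup F\nsubseteq a^*$. As this holds for all $n\geq n_0$ and all $F$, we conclude $a^*\sqsubset^* h$, contradicting $\Md(A)$-unboundedness over $M$.

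The only potential obstacle is conceptual rather than technical: $a^*$ itself is not in $M$, so one might worry that the definition of $h$ should depend on $a^*$. The key observation that resolves this is that the quantifier ``$\forall^\infty n$'' in the definition of $\sqsubset^*$ absorbs the information about $a^*\cap x$, which is a finite set; thus $h$ can be defined purely from $x$ and $A$ (hence inside $M$), while the finite exceptional set $a^*\cap x\subseteq n_0$ is handled by letting $n_0$ be the starting threshold. The rest is just unpacking definitions.
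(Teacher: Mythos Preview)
Your proof is correct and is essentially the natural argument: the paper itself does not include a proof of this lemma (it is only cited from \cite{BrF}), and your construction of $h(n,F)=\min\{k\in x\menos\bigcup F:k\geq n\}+1$ in $M$ together with the choice of the threshold $n_0$ is exactly how one shows it.
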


\begin{lemma}\label{HechlerMad}
   Let $Z$ be a set, $z^*\in Z$ and let $\dot{A}:=\la\dot{a}_z:z\in Z\ra$ be the a.d.\ family added by $\Hor_Z$.
   \begin{enumerate}[(a)]
       \item {\cite[Lemma~4]{BrF}} $\Hor_Z$ forces that $\dot{a}_{z^*}$ is $\Md(\dot{A}\frestr(Z\menos\{z^*\}))$-unbounded over $V^{\Hor_{Z\menos\{z^*\}}}$.

       \item If $Z$ is uncountable then $\Hor_Z$ forces that $\dot{A}$ is a strong-$\Md$ a.d.\ family.
   \end{enumerate}
\end{lemma}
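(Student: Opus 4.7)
The plan is to prove (b) by contradiction, leveraging (a). Suppose the conclusion fails: there exist a $\Hor_Z$-name $\dot{h}$ for a function in $\omega^{\omega\times[\dot{A}]^{<\aleph_0}}$ and a condition $p^*$ forcing $|\{z\in Z:\dot{a}_z\sqsubset^*\dot{h}\}|\geq\aleph_1$. A density argument produces a one-to-one sequence $\langle z_\alpha:\alpha<\omega_1\rangle$ in $Z$ and conditions $p_\alpha\leq p^*$ with $z_\alpha\in F_{p_\alpha}$ and $p_\alpha\Vdash\dot{a}_{z_\alpha}\sqsubset^*\dot{h}$. Applying the $\Delta$-system lemma and standard pigeonhole refinements on an uncountable $I\subseteq\omega_1$, I would arrange: $F_{p_\alpha}=R\cup A_\alpha$ for a fixed finite root $R$ with pairwise disjoint $A_\alpha\ni z_\alpha$; $n_{p_\alpha}=n^*$ constant; $p_\alpha\frestr(R\times n^*)=q^*$ a fixed stem; uniform combinatorial type of $p_\alpha\frestr(A_\alpha\times n^*)$ via bijections $A_\alpha\to A^*$ sending $z_\alpha$ to a common slot; and a common tail index $N^*$ past which the $\sqsubset^*$-domination is forced.

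Fix $\alpha^*\in I$ and set $z^*:=z_{\alpha^*}$. The strategy is to contradict (a) at $z^*$ by producing a $\Hor_{Z\menos\{z^*\}}$-name $\dot{h}^*$ for a function in $\omega^{\omega\times[\dot{A}\frestr(Z\menos\{z^*\})]^{<\aleph_0}}$ together with an extension of $p_{\alpha^*}$ forcing $\dot{a}_{z^*}\sqsubset^*\dot{h}^*$; by the monotonicity of $\sqsubset^*$ in its right coordinate (larger values on the right make domination easier), it suffices that $\dot{h}^*$ pointwise dominates $\dot{h}$ on the subdomain $\omega\times[\dot{A}\frestr(Z\menos\{z^*\})]^{<\aleph_0}$. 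The key tool will be the $Z$-symmetry of $\Hor_Z$: for $\alpha\neq\alpha^*$ in $I$, the permutation of $Z$ swapping $A_{\alpha^*}$ and $A_\alpha$ via the uniform bijection (fixing the rest of $Z$) lifts to an automorphism $\sigma_\alpha$ of $\Hor_Z$ with $\sigma_\alpha(p_{\alpha^*})=p_\alpha$, so pulling back the forcing statement $p_\alpha\Vdash\dot{a}_{z_\alpha}\sqsubset^*\dot{h}$ yields $p_{\alpha^*}\Vdash\dot{a}_{z^*}\sqsubset^*\sigma_\alpha^{-1}(\dot{h})$.

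The main obstacle, and the technical core of the argument, is that the subdomain $\omega\times[\dot{A}\frestr(Z\menos\{z^*\})]^{<\aleph_0}$ has cardinality $|Z|$, so even though each single value $\dot{h}(n,F^*)$ depends on only countably many coordinates by ccc, the union of ccc-supports across the whole subdomain may exhaust $Z$. Consequently, no single $\alpha\in I\menos\{\alpha^*\}$ immediately makes $\sigma_\alpha^{-1}(\dot{h})$ a $\Hor_{Z\menos\{z^*\}}$-name. Carrying out the argument will require a careful diagonalization: either (i) locating a single $\alpha$ for which $\sigma_\alpha^{-1}(\dot{h})$ is ``good enough'' on a cofinal set of $(n,F^*)$'s to still force $\dot{a}_{z^*}\sqsubset^*$ modulo the tail index $N^*$, or (ii) synthesizing $\dot{h}^*$ by amalgamating $\sigma_\alpha^{-1}(\dot{h})$ across many $\alpha$'s via the pairwise compatibility of the $p_\alpha$'s, both routes exploiting the pairwise disjointness of the $A_\alpha$'s. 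Either way, the resulting $\dot{h}^*$ lies in $V^{\Hor_{Z\menos\{z^*\}}}$ and yields the desired contradiction with (a).
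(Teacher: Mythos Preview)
Your proposal is incomplete: you correctly identify the central obstacle---that $\dot{h}$ has a domain of size $|Z|$, so no single automorphism $\sigma_\alpha$ will make $\sigma_\alpha^{-1}(\dot{h})$ a $\Hor_{Z\smallsetminus\{z^*\}}$-name---but you do not actually resolve it. The two routes you sketch (``locating a single $\alpha$ good enough on a cofinal set'' or ``amalgamating across many $\alpha$'') are not carried out, and it is not clear either one works without substantially more argument. As it stands, this is a strategy outline, not a proof.

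More importantly, the entire $\Delta$-system/symmetry apparatus is unnecessary, because you are aiming at the wrong target. You try to produce a $\Hor_{Z\smallsetminus\{z^*\}}$-name $\dot{h}^*$ on the \emph{full} domain $\omega\times[\dot{A}\frestr(Z\smallsetminus\{z^*\})]^{<\aleph_0}$ that pointwise dominates $\dot{h}$. But observe that $\not\sqsubset^*$ is anti-monotone in the index set $P$: if $P'\subseteq P$ and $x\not\sqsubset^* h\frestr(\omega\times P')$, then the witnessing pairs $(n,F)$ with $F\in P'$ already lie in $P$, so $x\not\sqsubset^* h$. Hence it suffices to find a \emph{countable} $C\subseteq Z$ such that $\dot{h}\frestr(\omega\times[\dot{A}\frestr C]^{<\aleph_0})$ is an $\Hor_C$-name. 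This is exactly what the paper does: the set of such $C$ is a club in $[Z]^{\aleph_0}$ by a routine ccc/closing-off argument, so pick any $C$ in it. Then for every $z^*\in Z\smallsetminus C$, part (a) applied with $C\cup\{z^*\}$ in place of $Z$ gives $\dot{a}_{z^*}\not\sqsubset^*\dot{h}\frestr(\omega\times[\dot{A}\frestr C]^{<\aleph_0})$, hence $\dot{a}_{z^*}\not\sqsubset^*\dot{h}$. Thus $\{z:\dot{a}_z\sqsubset^*\dot{h}\}\subseteq C$ is countable, which is precisely the strong-$\Md$ property. No contradiction argument, no $\Delta$-systems, no automorphisms.
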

\begin{proof}
   We show (b). Let $\dot{h}$ be a $\Hor_Z$-name of a function in $\omega^{\omega\times[\dot{A}]^{<\aleph_0}}$. Note that the set
   \[\{C\in[Z]^{\aleph_0}:\dot{h}\frestr(\omega\times[\dot{A}\frestr C]^{<\aleph_0})\text{\ is an $\Hor_C$-name}\}\]
   is a club in $[Z]^{\aleph_0}$ (here, $\dot{A}\frestr C:=\{\dot{a}_z:z\in C\}$), so choose some $C$ in this club set. Hence, by (a), for any $z^*\in Z\menos C$, $\Hor_{Z}$ forces that $\dot{a}_{z^*}\not\sqsubset^*\dot{h}\frestr(\omega\times[\dot{A}\frestr C]^{<\aleph_0})$, which implies that $\dot{a}_{z^*}\not\sqsubset^*\dot{h}$.
\end{proof}

\begin{theorem}\label{FrKnastermad}
   If $\kappa$ is an uncountable regular cardinal then any $\kappa$-$\Fr$-Knaster poset preserves all the $\kappa$-strong-$\Md$ families from the ground model.
\end{theorem}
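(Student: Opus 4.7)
The plan is to adapt the contradiction argument of Theorem \ref{FrKnasterpresunb} to the relational system $\Md(A)$. Fix a $\kappa$-$\Fr$-Knaster poset $\Por$ and a $\kappa$-strong-$\Md$ family $A$ in the ground model. Suppose, towards a contradiction, that some $p\in\Por$ and some $\Por$-name $\dot h$ for a function in $\omega^{\omega\times[A]^{<\aleph_0}}$ satisfy $p\Vdash|\{a\in A : a\sqsubset^*\dot h\}|\geq\kappa$. By regularity of $\kappa$ and a standard pigeonhole, I would first extract $A'\subseteq A$ with $|A'|=\kappa$, conditions $p_a\leq p$, and a single $m<\omega$ such that
\[p_a\Vdash \forall n\geq m\,\forall F\in[A]^{<\aleph_0}\bigl([n,\dot h(n,F))\setminus\textstyle\bigcup F\not\subseteq a\bigr).\]
Then I would apply the $\kappa$-$\Fr$-Knaster property to $\{p_a:a\in A'\}$ to obtain $A''\subseteq A'$ of size $\kappa$ with $\{p_a:a\in A''\}$ being $\Fr$-linked.

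The key combinatorial quantity replacing the value $x(j)$ in Theorem \ref{FrKnasterpresunb} is the function $k_a(n,F):=\min\bigl([n,\omega)\setminus(\bigcup F\cup a)\bigr)$, which is a well-defined natural number since $a\in[\omega]^{\aleph_0}$. The forcing statement above is equivalent to $p_a\Vdash \dot h(n,F)>k_a(n,F)$ for all $n\geq m$ and $F\in[A]^{<\aleph_0}$. The second step is to show that there must exist $n_0\geq m$ and $F_0\in[A]^{<\aleph_0}$ such that the set $\{k_a(n_0,F_0):a\in A''\}$ is infinite. For if not, then for every $(n,F)$ with $n\geq m$ the number $h^*(n,F):=\max\{k_a(n,F):a\in A''\}+1$ is well-defined; extending $h^*$ arbitrarily on $[0,m)\times[A]^{<\aleph_0}$ produces a ground-model function in $\omega^{\omega\times[A]^{<\aleph_0}}$ such that $a\sqsubset^* h^*$ for every $a\in A''$, contradicting that $A$ is strongly $\kappa$-$\Md(A)$-unbounded (since $|A''|=\kappa$).

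With such $(n_0,F_0)$ in hand, I would choose a sequence $\la a_i:i<\omega\ra$ in $A''$ with the values $k_{a_i}(n_0,F_0)$ pairwise distinct, and hence unbounded in $\omega$. By the $\Fr$-linkedness of $\{p_{a_i}:i<\omega\}$, there exists $q\in\Por$ forcing $\dot W(\la p_{a_i}:i<\omega\ra)$ infinite. But then $q$ forces $\dot h(n_0,F_0)>k_{a_i}(n_0,F_0)$ for infinitely many $i$, making $\dot h(n_0,F_0)$ larger than every natural number, the desired contradiction.

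The main obstacle is conceptual rather than technical: recognising the correct analog of the ``unbounded coordinate'' pigeonhole used for $\Dbf$. Once one notices that $k_a(n,F)$ plays exactly the role that $x(j)$ plays in Theorem \ref{FrKnasterpresunb}, the rest of the argument parallels it closely, with strong $\kappa$-$\Md(A)$-unboundedness used in the same ``otherwise define a dominating function in the ground model'' step that the $\Dbf$ version uses.
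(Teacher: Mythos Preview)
Your argument is correct and matches the paper's proof: your observation that $\{k_a(n_0,F_0):a\in A''\}$ is infinite for some $(n_0,F_0)$ with $n_0\geq m$ is exactly the content of the paper's Claim (phrased there as: for some $k\geq m$ and $F$, every interval $[k,l)\setminus\bigcup F$ is covered by some $a\in A''$), and the final contradictions are the same. One small correction: well-definedness of $k_a(n,F)$ does not follow from $a\in[\omega]^{\aleph_0}$ alone, since nothing rules out $\bigcup F\cup a$ being cofinite; rather, for $n\geq m$ and $a\in A''$ it follows because otherwise $p_a$ would force the contradictory statement $[n,\dot h(n,F))\setminus\bigcup F\subseteq a$.
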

\begin{proof}
   Let $\Por$ be a $\kappa$-$\Fr$-Knaster poset and let $A$ be a $\kappa$-strong-$\Md$ family. Assume, towards a contradiction, that there is some $p\in\Por$ and some $\Por$-name $\dot{h}$ of a function in $\omega^{\omega\times[A]^{<\aleph_0}}$ such that $p\Vdash|\{a\in A:a\sqsubset^*\dot{h}\}|\geq\kappa$. As in the proof of Theorem~\ref{FrKnasterpresunb}, find an $A'\subseteq A$ of size $\kappa$, $\{p_a:a\in A'\}\subseteq\Por$ and an $m<\omega$ such that, for each $a\in A'$, $p_a\leq p$ and $p_a\Vdash\forall n\geq m\forall F\in[A]^{<\aleph_0}([n,\dot{h}(n,F))\menos\bigcup F\nsubseteq a)$. We can also find an $A''\subseteq A'$ of size $\kappa$ such that $\{p_a:a\in A''\}$ is $\Fr$-linked.

   \begin{clm}\label{clFrKnastermad}
      The set of $k<\omega$ that satisfies $\exists F\in[A]^{<\aleph_0}\forall l\geq k\exists a\in A''([k,l)\menos\bigcup F\subseteq a)$ is infinite.
   \end{clm}
   \begin{proof}
      Assume the contrary, that is, there is some $k_0<\omega$ such that, for every $k\geq k_0$ and $F\in[A]^{<\aleph_0}$ there is a $g(k,F)<\omega$ such that $[k,g(k,F))\menos\bigcup F\nsubseteq a$ for all $a\in A''$. This defines a function $g\in\omega^{\omega\times[A]^{<\aleph_0}}$ that $\sqsubset^*$-dominates all the members of $A''$, but this contradicts that $A$ is strongly $\kappa$-$\Md(A)$-unbounded. This ends the proof of Claim~\ref{clFrKnastermad}.
   \end{proof}
   We continue the proof of Theorem~\ref{FrKnastermad}. Choose a $k\geq m$ and one $F\in[A]^{<\aleph_0}$ as in Claim~\ref{clFrKnastermad}. Hence, for each $l\geq k$ there is some $a_l\in A''$ such that $[k,l)\menos\bigcup F\subseteq a_l$. Put $p_l:=p_{a_l}$ and $\bar{p}:=\la p_l:l\geq k\ra$, so there is a $q\in\Por$ forcing that $\dot{W}(\bar{p})$ is infinite. Let $G$ be $\Por$-generic over $V$ with $q\in G$ and work in $V[G]$. Denote $h:=\dot{h}[G]$ and $W:=\dot{W}(\bar{p})[G]$. Note that $[k,h(k,F))\menos\bigcup F\nsubseteq a_l$ for any $l\in W$. On the other hand, $[k,l)\menos\bigcup F\subseteq a_l$ for any $l\geq k$, in particular, if $l\in W$ is chosen above $h(k,F)$ then $[k,h(k,F))\menos\bigcup F\subseteq a_l$, a contradiction. This ends the proof of Theorem~\ref{FrKnastermad}.
\end{proof}

\subsection{FS iterations and products}

To finish this section, we present some results about FS iterations and FS products of filter-linked and filter-Knaster posets. With the exception of the proof of Theorem~\ref{prodDlink}, this part was taken care of, with a more general notation, in~\cite[Sect.~5]{mejiavert}.

\begin{theorem}\label{Flinkit}
    Let $\theta$ be an uncountable regular cardinal.
    \begin{enumerate}[(a)]
        \item Any FS iteration of $\theta$-$\Fr$-Knaster posets is $\theta$-$\Fr$-Knaster.
        \item Any FS iteration of $\theta$-uf-Knaster posets is $\theta$-uf-Knaster.
        \item If $\mu$ is an infinite cardinal, then any FS iteration of length ${<}(2^\mu)^+$ of $\mu$-$\Fr$-linked posets is $\mu$-$\Fr$-linked.
    \end{enumerate}
\end{theorem}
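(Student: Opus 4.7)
For parts (a) and (b), which I would handle simultaneously, I would argue by induction on the length $\pi$ of the iteration. At the successor step $\pi = \alpha + 1$, given $\{p_\xi = (r_\xi, \dot{q}_\xi) : \xi < \theta\} \subseteq \Pbb_\alpha \ast \Qnm_\alpha$, the inductive $\theta$-$\Fr$-Knaster (resp.\ uf-Knaster) of $\Pbb_\alpha$ yields a subfamily $\Theta_0 \in [\theta]^\theta$ such that $\{r_\xi : \xi \in \Theta_0\}$ is $\Fr$-linked (resp.\ uf-linked). Working in $V^{\Pbb_\alpha}$, the $\theta$-$\Fr$-Knaster (resp.\ uf-Knaster) of $\Qnm_\alpha$ applied to $\{\dot{q}_\xi : \xi \in \Theta_0\}$ produces a $\Pbb_\alpha$-name $\dot{\Theta}_1 \subseteq \Theta_0$ of size $\theta$ whose corresponding family is $\Fr$-linked (resp.\ uf-linked) in $\Qnm_\alpha$. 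The reflection step extracts a ground-model $\Theta_1 \in [\Theta_0]^\theta$ from $\dot{\Theta}_1$ by choosing, for each $\xi$ in a sufficiently large subset of $\Theta_0$, a condition $s_\xi \leq r_\xi$ in $\Pbb_\alpha$ forcing $\xi \in \dot{\Theta}_1$, and then applying the inductive $\theta$-Knaster property once more to the family $\{s_\xi\}$ to thin it to a $\Fr$-linked (resp.\ uf-linked) ground-model subfamily indexed by $\Theta_1$. Then $\{(s_\xi, \dot{q}_\xi) : \xi \in \Theta_1\}$ is the sought subfamily. Limit steps reduce via a $\Delta$-system lemma on the finite supports $\{\dom p_\xi\}$ to the initial segment $\Pbb_{\max R + 1}$ containing the root $R$, where the induction applies.

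For part (c), I would invoke the Engelking--Kar\l{}owicz theorem: since $|\pi| \leq 2^\mu$, there is a family $\{f_\beta : \pi \to \mu : \beta < \mu\}$ such that every finite partial function $g : F \to \mu$ with $F \subseteq \pi$ is extended by some $f_\beta$. Fix $\Pbb_\alpha$-names $\langle \dot{P}^\alpha_i : i < \mu \rangle$ witnessing $\Vdash \Qnm_\alpha = \bigcup_{i<\mu} \dot{P}^\alpha_i$ with each piece $\Fr$-linked. For each $p \in \Pbb_\pi$, recursively build a strengthening $p^* \leq p$ with $\dom p^* = \dom p$ together with a coloring $c_{p^*} : \dom p^* \to \mu$ such that $p^* \frestr \alpha \Vdash p^*(\alpha) \in \dot{P}^\alpha_{c_{p^*}(\alpha)}$; assign $p$ to the piece $P_\beta$ for some $\beta < \mu$ with $f_\beta \frestr \dom p^* = c_{p^*}$ (existence guaranteed by E--K). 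This produces $\Pbb_\pi = \bigcup_{\beta<\mu} P_\beta$, and the $\Fr$-linkedness of each $P_\beta$ reduces to the argument of (a) applied to the strengthenings $p_n^*$: at every coordinate $\alpha$ lying in the root of any $\Delta$-system drawn from $P_\beta$, the corresponding strengthenings lie in the single $\Fr$-linked piece $\dot{P}^\alpha_{f_\beta(\alpha)}$ (note that a $\Fr$-linked witness for the $p_n^*$ is automatically a $\Fr$-linked witness for the $p_n$ since $p_n^* \leq p_n$).

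The principal technical obstacle in all three parts is the assembly of a single witness $q \in \Pbb_\pi$ for $\Fr$-positivity of $\dot{W}(\bar{p})$ from the pointwise linkedness data at each root coordinate. After the $\Delta$-system reduction on supports, the finite root $R$ carries all nontrivial content: at each $\alpha \in R$ the linkedness property at $\Qnm_\alpha$ supplies a name $\dot{q}(\alpha)$ forcing $T_\alpha := \{n : p_{\xi_n}(\alpha) \in \dot{H}_\alpha\}$ to be $F$-positive, while at the pairwise disjoint off-root coordinates no choice is required, since for any extension $r \leq q$ and any $N$ one can find $n \geq N$ with $\dom p_{\xi_n} \setminus R$ disjoint from $\dom r$ and then glue $r$ with the off-root part of $p_{\xi_n}$. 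The delicate point in (b) is ensuring that $\bigcap_{\alpha \in R} T_\alpha$ is still $F$-positive (and not merely each $T_\alpha$ individually): this is where uf-linkedness with respect to a common (nonprincipal) ultrafilter $D$ is essential, yielding $T_\alpha \in D$ for every $\alpha \in R$ and hence $\bigcap_{\alpha \in R} T_\alpha \in D$. A secondary obstacle is the reflection step in (a),(b), which is handled by the iterated use of the inductive $\theta$-Knaster property together with the $\theta$-cc of all intermediate iterations.
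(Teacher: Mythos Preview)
The paper does not prove this theorem but only cites \cite[Rem.~5.11]{mejiavert}, so there is no in-paper proof to compare against; your overall strategy (induction on length, $\Delta$-system at limits, Engelking--Kar\l owicz for (c)) is the standard one and is correct in outline.

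There is, however, a genuine gap in your successor step for (a) and (b). You apply the $\theta$-Knaster property of $\Qnm_\alpha$ in $V^{\Pbb_\alpha}$ to the full family $\{\dot q_\xi:\xi\in\Theta_0\}$ and then claim you can choose, for $\theta$-many $\xi$, a condition $s_\xi\leq r_\xi$ forcing $\xi\in\dot\Theta_1$. But nothing connects $\dot\Theta_1$ to the $r_\xi$: if $\xi\in\dot\Theta_1[G]$ there is no reason $r_\xi\in G$, so the witnesses $s_\xi$ forcing $\xi\in\dot\Theta_1$ need not lie below $r_\xi$, and the set of $\xi$ for which such $s_\xi\leq r_\xi$ exists can be of size $<\theta$. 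The fix is to first use the $\theta$-cc of $\Pbb_\alpha$ to find a condition $r^*$ forcing $|\{\xi<\theta:r_\xi\in\dot G\}|=\theta$ (such $r^*$ exists: otherwise one bounds $\{\xi:r_\xi\in\dot G\}$ by a ground-model set $S$ of size $<\theta$, contradicting $r_\xi\Vdash\xi\in\dot A$ for $\xi\notin S$). Then, working below $r^*$, apply the Knaster property of $\Qnm_\alpha$ only to $\{\dot q_\xi:r_\xi\in\dot G\}$. Now any $s_\xi\leq r^*$ forcing $\xi\in\dot\Theta_1$ automatically forces $r_\xi\in\dot G$, hence $s_\xi\leq r_\xi$ in the separative quotient, and the rest of your argument goes through.

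A smaller imprecision: in your ``assembly'' paragraph you describe finding the witnesses $\dot q(\alpha)$ at each $\alpha\in R$ independently and then worry about $\bigcap_\alpha T_\alpha$. The construction must be \emph{iterative along $R$}: at $\alpha_1$ you apply linkedness to the sequence $\langle p_n(\alpha_1):n\in T_{\alpha_0}\rangle$ (an infinite sequence, once you are in the extension by $\Pbb_{\alpha_1}$ below the witness at $\alpha_0$), obtaining $T_{\alpha_0}\cap T_{\alpha_1}$ infinite, and so on. For (b) this amounts to extending the ultrafilter at each step (uf-linked gives $T_\alpha\in D^+$, not $T_\alpha\in D$); your phrasing ``$T_\alpha\in D$ for every $\alpha$'' is not quite right without this iterative extension.
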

\begin{proof}
   See~\cite[Rem.~5.11]{mejiavert}.
\end{proof}

\begin{theorem}\label{prodDlink}
   Let $\Qor_0$ and $\Qor_1$ be posets. If $D_0$ is a non-principal ultrafilter on $\omega$ and $Q_0\subseteq\Qor_0$ and $Q_1\subseteq\Qor_1$ are $D_0$-linked subsets, then $Q_0\times Q_1$ is $D_0$-linked in $\Qor_0\times\Qor_1$. In particular,
   \begin{enumerate}[(a)]
       \item The product of two $\mu$-$D_0$-linked posets is $\mu$-$D_0$-linked.
       \item If $\theta$ is regular, then the product of two $\theta$-$D_0$-Knaster posets is $\theta$-$D_0$-Knaster.
   \end{enumerate}
    Similar statements hold for ``uf-linked'' and ``uf-Knaster''.
\end{theorem}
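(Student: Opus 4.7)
The plan is to establish the main ``single condition'' claim about $Q_0\times Q_1$ first and then deduce (a), (b) and the ``uf'' variants by standard bookkeeping. Fix $\bar p=\la(p_n^0,p_n^1):n<\omega\ra$ in $Q_0\times Q_1$, let $\bar p^i:=\la p_n^i:n<\omega\ra$ for $i\in\{0,1\}$, and use the $D_0$-linkedness of each $Q_i$ to pick $q_i\in\Qor_i$ forcing $\dot W(\bar p^i)\in D_0^+$. The generic of the product decomposes as $\dot G=\dot G_0\times\dot G_1$, so $\dot W(\bar p)=\dot W(\bar p^0)\cap\dot W(\bar p^1)$; hence my target reduces to showing that $(q_0,q_1)$ forces this intersection to be $D_0$-positive.

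To verify this I would fix $A\in D_0$ and any extension $(q_0',q_1')\leq(q_0,q_1)$, and produce a common refinement forcing $\dot W(\bar p^0)\cap\dot W(\bar p^1)\cap A\neq\vacio$. The key auxiliary objects are the \emph{ground model} sets
\[B^i:=\{n\in A:p_n^i\text{ is compatible with }q_i'\}\quad(i\in\{0,1\}).\]
For $n\in A\menos B^i$ the incompatibility of $p_n^i$ with $q_i'$ forces $n\notin\dot W(\bar p^i)$, so $q_i'$ forces $\dot W(\bar p^i)\cap A\subseteq B^i$; since $q_i'\leq q_i$ also forces $\dot W(\bar p^i)\cap A\in D_0^+$, the ground model set $B^i$ is itself $D_0$-positive. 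Here the \emph{ultrafilter} hypothesis on $D_0$ enters essentially: it promotes $B^i\in D_0^+$ to $B^i\in D_0$, so $B^0\cap B^1\in D_0$ is non-empty. For any $n$ in this intersection I can pick common extensions $q_i''\leq q_i',p_n^i$ in $\Qor_i$, and then $(q_0'',q_1'')$ witnesses $n\in\dot W(\bar p)\cap A$. This promotion step is the main obstacle: a naive repetition of the argument for a general free filter $F$ would only yield $B^i\in F^+$, insufficient to conclude $B^0\cap B^1\neq\vacio$.

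For (a) one decomposes $\Qor_0\times\Qor_1=\bigcup_{(\alpha,\beta)\in\mu\times\mu}P_\alpha^0\times P_\beta^1$ into $\mu$ many $D_0$-linked pieces by the main claim. For (b), given $S\subseteq\Qor_0\times\Qor_1$ of size $\theta$, I would first normalize: if $|\pi_0(S)|<\theta$ then regularity of $\theta$ lets me fix a single first coordinate and invoke the $\theta$-$D_0$-Knaster property of $\Qor_1$ alone (and symmetrically for $\pi_1$); otherwise a section of $\pi_0$ yields a $\theta$-sized subset on which $\pi_0$ is injective, and two successive applications of $\theta$-$D_0$-Knaster (in each coordinate in turn) produce $I\in[\theta]^\theta$ such that both coordinate projections are $D_0$-linked. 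The main claim then packages these into a $D_0$-linked subset of $S$ of size $\theta$. The ``uf-linked'' and ``uf-Knaster'' variants are identical, since for each non-principal ultrafilter $D$ the argument above produces its witnesses uniformly, and a uf-linked subfamily in each coordinate serves simultaneously for every such $D$.
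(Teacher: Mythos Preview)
Your proof is correct, but it follows a different and more elementary route than the paper's. The paper views $\Qor_0\times\Qor_1$ as a two-step iteration and invokes a separate lemma (Lemma~\ref{simpleufextension}) about extending ultrafilters across parallel generic extensions: in $V[G_0]$ one extends $D_0\cup\{W_{\Qor_0}(\bar p^0)\}$ to an ultrafilter $D$, in $V[G_1]$ one extends $D_0\cup\{W_{\Qor_1}(\bar p^1)\}$ to an ultrafilter $D_0'$, and the lemma guarantees that $D\cup D_0'$ has the finite intersection property in $V[G_0][G_1]$, so $W(\bar p^0)\cap W(\bar p^1)\in D_0^+$. You instead stay in the ground model and use a direct compatibility argument: the sets $B^i=\{n\in A:p_n^i\not\perp q_i'\}$ are ground-model supersets of the forced $\dot W(\bar p^i)\cap A$, hence $D_0$-positive, hence in $D_0$ by the ultrafilter hypothesis, and their intersection is nonempty. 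Your approach is self-contained and makes the role of the ultrafilter hypothesis very transparent; the paper's approach is less elementary here but isolates Lemma~\ref{simpleufextension} as a reusable tool, which the authors need again in the proof of Main Lemma~\ref{main}. Your treatment of (a), (b) and the uf-variants is standard and fine.
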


This theorem is proved using
the following result, which is a weaker version of~\cite[Claim~1.6]{ShCov}.

\begin{lemma}\label{simpleufextension}
   Let $M\subseteq N$ be transitive models of $\thzfc$. In $M$, assume that $\Por$ is a poset, $D_0$ is an ultrafilter on $\omega$ and, in $N$, assume that $D$ is an ultrafilter that extends $D_0$. If $G$ is $\Por$-generic over $N$ and $D'_0\in M[G]$ is an ultrafilter on $\Pwf(\omega)\cap M[G]$ that extends $D_0$ then, in $N[G]$, $D\cup D'_0$ can be extended to an ultrafilter on $\Pwf(\omega)\cap N[G]$.
\end{lemma}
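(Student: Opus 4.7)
The plan is to argue that in $N[G]$ the family $D\cup D'_0$ already has the finite intersection property (FIP); once this is established, Zorn's lemma applied in $N[G]$ extends it to an ultrafilter on $\Pwf(\omega)\cap N[G]$. Since $D$ and $D'_0$ are filters closed under finite intersections, the FIP of $D\cup D'_0$ reduces to the statement that $A\cap B\neq\emptyset$ for every $A\in D$ and every $B\in D'_0$. The core observation I will exploit is that, because $D_0$ is an ultrafilter in $M$ that is contained in both $D$ and $D'_0$, we must have $D\cap M=D_0=D'_0\cap M$: any $X\in\Pwf(\omega)\cap M$ lying in $D$ (respectively $D'_0$) cannot have its complement in $D_0$, so $X\in D_0$.

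Suppose, aiming at a contradiction, that $A\in D$ and $B\in D'_0$ satisfy $A\cap B=\emptyset$. Choose a $\Por$-name $\dot B\in M$ with $\dot B[G]=B$, and fix $p\in G$ forcing (in $N$) that $\dot B\cap\check A=\emptyset$; such a $p$ exists because this statement holds in $N[G]$. The key auxiliary object is
\[
A':=\{n<\omega : p\Vdash_\Por n\notin\dot B\}.
\]
Because $p,\dot B\in M$ and $\Por\in M$, and since the forcing relation for bounded formulas with $M$-names is absolute between the transitive models $M$ and $N$ that both contain $\Por$ and these names, $A'$ is definable in $M$ and $A'\in M$. From $p\Vdash\dot B\cap\check A=\emptyset$ we get $A\subseteq A'$; and from $p\in G$ together with the definition of $A'$ we get $B\cap A'=\emptyset$, hence $B\subseteq\omega\smallsetminus A'$.

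The contradiction now follows from the two extendings. Since $B\in D'_0$ and $B\subseteq\omega\smallsetminus A'$, we have $\omega\smallsetminus A'\in D'_0$; but $\omega\smallsetminus A'\in M$, so by the first paragraph $\omega\smallsetminus A'\in D'_0\cap M=D_0$, and therefore $\omega\smallsetminus A'\in D$ as well, since $D\supseteq D_0$. On the other hand, $A\in D$ and $A\subseteq A'$ force $A'\in D$, contradicting $\omega\smallsetminus A'\in D$. Thus $A\cap B\neq\emptyset$, completing the FIP argument and hence the proof.

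The only delicate point I foresee is step 2: we are mixing a name $\dot B\in M$ with a set $A\in N\smallsetminus M$ in the forcing statement $p\Vdash\dot B\cap\check A=\emptyset$, evaluated in $N$. The trick for avoiding a genuine mixing of the two universes is to unfold the statement into the pointwise form ``$p\Vdash n\notin\dot B$ for every $n\in A$'', which lets us translate back into $M$ by absoluteness of the forcing relation for names in $M$, producing the set $A'\in M$ that does all the work.
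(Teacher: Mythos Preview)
Your proof is correct and follows essentially the same approach as the paper's. Both hinge on the same auxiliary set $A'=\{n<\omega : p\Vdash n\notin\dot B\}\in M$ and the observation that $D'_0\cap M=D_0$ because $D_0$ is an ultrafilter in $M$; the only cosmetic difference is that the paper runs a direct density argument (for arbitrary $p\in\Por$ it produces $q\leq p$ forcing some $n\in a\cap\dot b$), whereas you argue by contradiction after fixing $p\in G$.
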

\begin{proof}
   Let $\dot{D}'_0\in M$ be a $\Por$-name of $D'_0$. Assume that $a\in D$, $\dot{b}\in M$ is a $\Por$-name of a member of $\dot{D}'_0$, and $p\in\Por$. Put $b'_0:=\{n<\omega:p\Vdash n\notin\dot{b}\}$. It is clear that $b'_0\in M$ and that $p\Vdash b'_0\cap\dot{b}=\emptyset$. Hence, $p\Vdash\omega\smallsetminus b'_0\in\dot{D}'_0$, which implies that $\omega\smallsetminus b'_0\in D_0$. Since $D_0\subseteq D$ and $a\in D$, $a\smallsetminus b'_0\in D$, so there is an $n\in a\smallsetminus b'_0$. Thus, in $M$, there is a $q\leq p$ that forces $n\in \dot{b}$, so $q$ forces, in $N$, that $n\in a\cap\dot{b}$.\footnote{Recall that $\Por\subseteq M$ since $\Por\in M$ and $M$ is transitive.}
\end{proof}

\begin{proof}[Proof of Theorem~\ref{prodDlink}]
    Let $\bar{q}=\la(q_{0,n},q_{1,n}):n<\omega\ra$ be a sequence in $Q_0\times Q_1$. Since both $Q_0$ and $Q_1$ are $D_0$-linked, for each $e\in\{0,1\}$ there is some $r_e\in\Qor_e$ forcing $\dot{W}_{\Qor_e}(\bar{q}_e)\in D_0^+$. Now assume that $G_0$ is $\Qor_0$-generic over $V$ and $G_1$ is $\Qor_1$-generic over $V[G_0]$ such that $(r_0,r_1)\in G_0\times G_1$. Let $M:=V$ and $N:=V[G_0]$. In $N$, there is an ultrafilter $D\supseteq D_0\cup\{W_{\Qor_0}(\bar{p})\}$ and, in $M[G_1]$, there is an ultrafilter $D'_0\supseteq D_0\cup\{W_{\Qor_1}(\bar{q})\}$. Thus, in $N[G_1]$, $D\cup D'_0$ has the finite intersection property, so $W_{\Qor_0\times\Qor_1}(\bar{q})=W_{\Qor_0}(\bar{q}_0)\cap W_{\Qor_1}(\bar{q}_1)\in D_0^+$.
\end{proof}

\begin{theorem}\label{FSprod}
   If $\kappa$ is an uncountable regular cardinal, $F$ is a free filter on $\omega$, and $\Por$ is a FS product of posets such that any finite subproduct is $\kappa$-$F$-Knaster, then $\Por$ is $\kappa$-$F$-Knaster.\footnote{In the terminology of~\cite[Sect.~5]{mejiavert}, the notion ``$F$-linked'' is \emph{FS-productive}.} In particular, when $F$ is an ultrafilter, any FS product of $\kappa$-$F$-Knaster posets is $\kappa$-$F$-Knaster (likewise for ``uf-Knaster'').
\end{theorem}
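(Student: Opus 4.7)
The plan is to perform a $\Delta$-system reduction so that the supports of the conditions split into a common finite root $R$ and pairwise disjoint ``tails'', apply the hypothesis directly to the finite subproduct indexed by $R$, and then lift the resulting $F$-linkedness to the full FS product by amalgamating test conditions with the disjoint tails, using freeness of $F$ to absorb finitely many bad indices. Concretely, let $X = \{p_\alpha : \alpha < \kappa\} \subseteq \Por$ be injectively enumerated. Since each $p_\alpha$ has finite support $S_\alpha$, the $\Delta$-system lemma (valid for $\kappa$ uncountable regular and any family of $\kappa$-many finite sets) yields $A \in [\kappa]^\kappa$ and a finite root $R$ with $S_\alpha \cap S_\beta = R$ for distinct $\alpha,\beta \in A$. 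Set $T_\alpha := S_\alpha \setminus R$ (pairwise disjoint finite sets) and $\Por_R := \prod_{i \in R}\Por_i$ (naturally a complete subposet of $\Por$ by extending trivially outside $R$). Either some restriction is attained $\kappa$ times---thin $A$ to $A' \in [A]^\kappa$ where $p_\alpha \frestr R$ is constantly some $p^*$, so that the singleton $\{p^*\}$ is trivially $F$-linked---or one can thin $A$ to make $\alpha \mapsto p_\alpha \frestr R$ injective, and then the hypothesis that $\Por_R$ is $\kappa$-$F$-Knaster gives $A' \in [A]^\kappa$ with $\{p_\alpha \frestr R : \alpha \in A'\}$ an $F$-linked subset of $\Por_R$.

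The core step is to show $\{p_\alpha : \alpha \in A'\}$ is $F$-linked in $\Por$. Given a sequence $\bar{p} = \la p_{\alpha_n} : n < \omega \ra$ from this set, the $F$-linkedness of its restrictions supplies a $q \in \Por_R$ (viewed in $\Por$ as supported on $R$) such that, for every $A \in F$, the set
\[
D_A := \{s \leq q \text{ in } \Por_R : \exists n \in A,\ s \leq p_{\alpha_n} \frestr R\}
\]
is dense below $q$ in $\Por_R$. I claim $q \Vdash_\Por \dot{W}_\Por(\bar{p}) \in F^+$. Fix $A \in F$ and $r \leq q$ in $\Por$ with finite support $S$. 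Since the $T_{\alpha_n}$ are pairwise disjoint and $S$ is finite, the set $A^* := \{n \in A : T_{\alpha_n} \cap S = \emptyset\}$ is cofinite in $A$, hence lies in $F$ by freeness of $F$. Density of $D_{A^*}$ below $q$ then furnishes $s \leq r \frestr R$ in $\Por_R$ and $n \in A^*$ with $s \leq p_{\alpha_n} \frestr R$; disjointness $T_{\alpha_n} \cap S = \emptyset$ lets
\[
r' := s \cup (r \frestr (S \setminus R)) \cup (p_{\alpha_n} \frestr T_{\alpha_n})
\]
define a condition in $\Por$ refining both $r$ and $p_{\alpha_n}$. Thus $q \Vdash \dot{W}_\Por(\bar{p}) \cap A \neq \emptyset$ for every $A \in F$, i.e., $q \Vdash \dot{W}_\Por(\bar{p}) \in F^+$, as needed.

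The main obstacle is this amalgamation, which carefully balances $F$-linkedness in the finite subproduct, freeness of $F$, and the disjoint-support structure to produce a genuine common extension in $\Por$. For the ``in particular'' clause, when $F$ is an ultrafilter an iterated application of Theorem \ref{prodDlink}(a) shows that any finite subproduct of posets that are each $\mu_i$-$F$-linked with $\mu_i < \kappa$ is itself $\mu$-$F$-linked for some $\mu < \kappa$, and so $\kappa$-$F$-Knaster by Remark \ref{RemKnaster}(3); hence the hypothesis of the main statement holds for such FS products. The ``uf-Knaster'' variant is proved identically using the uf-linked case of Theorem \ref{prodDlink}.
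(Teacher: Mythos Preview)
Your proof is correct and follows the same route as the paper's: $\Delta$-system reduction to a finite root $R$, application of the $\kappa$-$F$-Knaster hypothesis on the finite subproduct $\Por_R$, and then the amalgamation $r' := s \cup r\frestr(S\setminus R) \cup p_{\alpha_n}\frestr T_{\alpha_n}$ using disjointness of the tails and freeness of $F$ to pass from $a$ to a cofinite $A^*\in F$. Your explicit case split on whether some restriction $p_\alpha\frestr R$ is attained $\kappa$ times is a small clarification the paper leaves implicit; one minor caveat (shared by the paper's proof) is that your assertion ``the $T_{\alpha_n}$ are pairwise disjoint'' tacitly assumes the $\alpha_n$ are distinct, whereas the definition of $F$-linked allows repeated terms---but this is harmless, since if some value $\beta$ recurs on an $F$-positive set of indices one may simply take $q=p_\beta$ instead.
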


\begin{proof}
   Let $\lambda$ be a cardinal and assume that $\Por$ is the FS product of $\la\Qor_\alpha:\alpha<\lambda\ra$ as in the hypothesis. If $\la p_\zeta:\zeta<\kappa\ra\subseteq\Por$ then, by the $\Delta$-system Lemma, there is some $K\subseteq\kappa$ of size $\kappa$ such that $\la\dom p_\zeta:\zeta\in K\ra$ forms a $\Delta$-system with root $R^*$. Since $\prod_{\alpha\in R^*}\Qor_\alpha$ is $\kappa$-$F$-Knaster, we can find a $K'\subseteq K$ of size $\kappa$ such that  $\{p_\zeta\frestr R^*:\zeta\in K'\}$ is $F$-linked.

   Assume that $\la\zeta_n:n<\omega\ra\subseteq K'$. Hence, there is some $q\in\prod_{\alpha\in R^*}\Qor_\alpha$ that forces $\{n<\omega:p_{\zeta_n}\frestr R^*\in\dot{G}\}\in F^+$. As a matter of fact, $q$ forces that $\{n<\omega:p_{\zeta_n}\in\dot{G}\}\in F^+$. To see this, assume that $a\in F$ and $r\leq q$ in $\Por$. Note that $\forall^\infty n<\omega(\dom r\cap\dom p_{\zeta_n}=R^*)$. On the other hand, we can find some $s\leq r\frestr R^*$ in $\prod_{\alpha\in R^*}\Qor_\alpha$ and an $n\in a$ such that $s\leq p_{\zeta_n}\frestr R^*$ and $\dom r\cap\dom p_{\zeta_n}=R^*$. Thus
   \[r':=s\cup r\frestr(\dom r\smallsetminus R^*)\cup p_{\zeta_n}\frestr(\dom p_{\zeta_n}\smallsetminus R^*)\]
   is a condition in $\Por$ stronger than both $r$ and $p_{\zeta_n}$.

   The latter statement is a consequence of Theorem~\ref{prodDlink}.
\end{proof}

\begin{theorem}\label{prodFrlink}
   Let $\mu$ be an infinite cardinal, $\la\Qor_i : i\in I\ra$ a sequence of $\mu$-$\Fr$-linked posets witnessed by $\la Q_i(\zeta):\zeta<\mu\ra$ for each $i\in I$, and let $\Por$ be the FS product of $\la\Qor_i : i\in I\ra$. If
   \begin{enumerate}[(i)]
       \item $|I|\leq 2^\mu$ and
       \item $\prod_{i\in u}Q_i(s(i))$ is $\Fr$-linked in $\prod_{i\in u}\Qor_i$ for any finite $u\subseteq I$ and $s:u\to \mu$,
   \end{enumerate}
   then $\Por$ is $\mu$-$\Fr$-linked.\footnote{In the terminology of~\cite[Sect.~5]{mejia-temp}, if the notion ``$\Fr$-linked" is productive, then it is strongly productive.}
\end{theorem}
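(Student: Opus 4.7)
The plan is to invoke the Engelking--Kar{\l}owicz theorem: since $|I|\leq 2^\mu$ with $\mu$ infinite, there exists a family $\{f_\xi:\xi<\mu\}$ of functions $f_\xi:I\to\mu$ such that, for every finite $u\subseteq I$ and every $s:u\to\mu$, some $f_\xi$ extends $s$. This is the combinatorial device that absorbs the potentially large index set $I$ into only $\mu$ ``colors'', allowing us to manufacture a $\mu$-indexed cover of $\Por$.

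For each $\xi<\mu$ I will set
\[P_\xi:=\{p\in\Por : p(i)\in Q_{i,f_\xi(i)}\text{ for every }i\in\dom p\}.\]
The first step is to check that $\Por=\bigcup_{\xi<\mu}P_\xi$: given $p\in\Por$, choose $s:\dom p\to\mu$ with $p(i)\in Q_{i,s(i)}$ for each $i\in\dom p$ (possible because $\Qor_i=\bigcup_{\zeta<\mu}Q_{i,\zeta}$), and then find $\xi$ with $f_\xi\frestr\dom p=s$; this forces $p\in P_\xi$.

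The main step is to show each $P_\xi$ is $\Fr$-linked. Fix a sequence $\bar p=\la p_n:n<\omega\ra$ in $P_\xi$. By a standard pigeonhole (first refine to make $|\dom p_n|$ constant, then apply the $\Delta$-system lemma for countable families of finite sets), there is an infinite $N\subseteq\omega$ such that $\la\dom p_n:n\in N\ra$ forms a $\Delta$-system with root $R\subseteq I$. By the defining property of $P_\xi$, $\la p_n\frestr R:n\in N\ra$ lies in $\prod_{i\in R}Q_{i,f_\xi(i)}$, which is $\Fr$-linked in $\prod_{i\in R}\Qor_i$ by hypothesis (ii) applied to $u=R$ and $s=f_\xi\frestr R$. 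Hence there is some $q\in\prod_{i\in R}\Qor_i$ forcing $\{n\in N:p_n\frestr R\in\dot G\}$ to be infinite.

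Finally I will verify that $q$, regarded as a condition in $\Por$ with $\dom q=R$, forces $\dot W_\Por(\bar p)$ to be infinite; this then completes the proof. This last step is the same FS-product gluing already used in Theorem \ref{FSprod}: given $r\leq q$ in $\Por$ and $N_0<\omega$, the $\Delta$-system property yields $\dom r\cap\dom p_n\subseteq R$ for all but finitely many $n\in N$, while $r\frestr R\leq q$ in $\prod_{i\in R}\Qor_i$ can be extended to some $s^*$ compatible with $p_n\frestr R$ for some such $n\geq N_0$; then the disjoint-support union $s^*\cup r\frestr(\dom r\smallsetminus R)\cup p_n\frestr(\dom p_n\smallsetminus R)$ is a common extension in $\Por$ of $r$ and $p_n$. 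The only real obstacle in the whole argument is producing the Engelking--Kar{\l}owicz functions; once these are in hand, the $\Delta$-system reduction and the FS-compatibility argument are entirely routine.
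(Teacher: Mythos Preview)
Your proof is correct and follows essentially the same approach as the paper's: both invoke the Engelking--Kar{\l}owicz theorem to obtain the covering family, then use a $\Delta$-system refinement together with the FS-product gluing from Theorem~\ref{FSprod}. The only cosmetic difference is that the paper indexes its pieces by pairs $(h,n)\in H\times\omega$ so that $|\dom p|\leq n$ is built into each piece, whereas you absorb that bound via an extra pigeonhole step inside the $\Fr$-linkedness argument; since $\mu\cdot\aleph_0=\mu$, both yield a $\mu$-sized cover and the arguments are otherwise identical.
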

\begin{proof}
   By a result of Engelking and Kar{\l}owicz~\cite{TopThm}, there is a set $H\subseteq \mu^I$ of size $\leq\mu$ such that any finite partial function from $I$ to $\mu$ is extended by some function in $H$.

   For each $h\in H$ and $n<\omega$ define \[Q_{h,n}:=\{p\in\Por : |\dom p|\leq n\text{\ and }\forall i\in\dom p(p(i)\in Q_i(h(i)))\}.\]
   It is clear that these sets cover $\Por$, so it remains to show that each $Q_{h,n}$ is $\Fr$-linked. Let $\bar{p}=\la p_k:k<\omega\ra$ be a sequence in $Q_{h,n}$. By the $\Delta$-system lemma, we can find $w\subseteq\omega$ infinite such that $\la\dom p_k : k\in w\ra$ form a $\Delta$-system with root $R^*$. Hence, by (ii), there is some $q\in\prod_{i\in R^*}\Qor_i$ forcing that $\{k\in w: p_k{\upharpoonright}R^*\in\dot{G}\}$ is infinite. Similar to the last part of the proof of Theorem~\ref{FSprod}, it can be shown that $q$ forces $w\cap \dot{W}_{\Por}(\bar{p})$ is infinite.
\end{proof}

\begin{remark}
   The reason the latter proof cannot guarantee the analog result for ``$F$-linked'' for other filters $F$ in general is that, when finding the $\Delta$-system, it cannot be guaranteed that $w\in F$. However, this can be done when $F$ is a Ramsey ultrafilter, so Theorem~\ref{prodFrlink} is valid for Ramsey ultrafilters in the place of $\Fr$ (even more, (ii) is  redundant by Theorem~\ref{prodDlink}).
\end{remark}

\section{Ultrafilter-extendable matrix iterations}\label{SecConstr}

This section is dedicated to prove the main result of this work (Theorem~\ref{mainpres}).

\begin{definition}\label{Defufextmatrix}
   Let $\kappa$ be an uncountable cardinal. A \emph{${<}\kappa$-ultrafilter-extendable matrix iteration} (abbreviated \emph{${<}\kappa$-uf-extendable}) is a simple matrix iteration $\mbf$ such that, for each $\xi<\pi^\mbf$, $\Por^\mbf_{\Delta^{\mbf}(\xi),\xi}$ forces that $\Qnm^\mbf_\xi$ is a ${<}\kappa$-uf-linked poset.

   As in Definition~\ref{Defcohsys}, we omit the upper index $\mbf$ when understood.
\end{definition}

When $I^\mbf=\nu+1$ for some ordinal $\nu$, the FS iteration $\Por_{\nu,\pi}=\la\Por_{\nu,\xi},\Qnm_{\nu,\xi}:\xi<\pi\ra$ is \underline{not} a FS iteration of ${<}\kappa$-uf-linked posets in general.

\begin{definition}\label{Remufmatrixccc}
   Let $\kappa$ be uncountable regular. Given a ${<}\kappa$-uf-extendable matrix iteration $\mbf$, we define $\theta^\mbf_\xi$ and $\la \dot{Q}^\mbf_\xi(\zeta):\zeta<\theta^\mbf_\xi\ra$ for $\xi<\pi^\mbf$ as follows. By Remark~\ref{RemKnaster} (items~(2)--(4)) it can be proved by induction on $\xi\leq\pi$ that $\Por_{\alpha,\xi}$ has the $\kappa$-Knaster property for every $\alpha\in I^\mbf$. Therefore, for each $\xi<\pi^\mbf$, we can find a cardinal $\theta_\xi^\mbf<\kappa$ (in the ground model) and a sequence $\la \dot{Q}^\mbf_\xi(\zeta):\zeta<\theta^\mbf_\xi\ra$ of $\Por^\mbf_{\Delta^{\mbf}(\xi),\xi}$-names such that $\Por^\mbf_{\Delta^{\mbf}(\xi),\xi}$ forces that $\la \dot{Q}^\mbf_\xi(\zeta):\zeta<\theta^\mbf_\xi\ra$ witnesses that $\Qnm^\mbf_\xi$ is ${<}\kappa$-uf-linked. %We use such sequences of names when dealing with ${<}\kappa$-uf-extendable matrix iterations.
   Again, upper indexes are omitted when understood.
\end{definition}

\begin{theorem}\label{mainpres}
   Let $\kappa$ be an uncountable regular cardinal and $\mbf$ a ${<}\kappa$-uf-extendable matrix iteration. Then $\Por_{\alpha,\pi}$ is $\kappa$-uf-Knaster for any $\alpha\in I^\mbf$. In particular, it preserves any strongly $\kappa$-$\Dbf$-unbounded family and any $\kappa$-strong-$\Md$ family from the ground model.
\end{theorem}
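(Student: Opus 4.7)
The preservation consequences follow at once from Theorems \ref{FrKnasterpresunb} and \ref{FrKnastermad}, since any $\kappa$-uf-Knaster poset is in particular $\kappa$-$\Fr$-Knaster. The core task is therefore to establish that $\Por_{\alpha,\pi}$ is $\kappa$-uf-Knaster. The plan has three stages: a $\Delta$-system and pigeonhole reduction to a ``uniform'' $\kappa$-sized subfamily, reduction of uf-linkedness of that subfamily to a main lemma about countable subsequences, and an inductive construction along $\xi\leq\pi$ of a \emph{matrix of ultrafilters} witnessing the main lemma.

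\textbf{Stage 1 (uniformization).} Given $\{p_\zeta : \zeta < \kappa\} \subseteq \Por_{\alpha,\pi}$, apply the $\Delta$-system lemma to extract $K_0 \in [\kappa]^\kappa$ with $\{\dom p_\zeta : \zeta \in K_0\}$ forming a $\Delta$-system with finite root $R^*$. For each $\zeta \in K_0$ and $\xi \in R^*$, since $\Por_{\alpha,\xi}$ is $\kappa$-cc (Remark \ref{Remufmatrixccc}) and $\theta^\mbf_\xi < \kappa$, we may strengthen $p_\zeta$ within its domain so that $p_\zeta\restriction\xi$ decides the component index $\eta_\zeta(\xi) < \theta^\mbf_\xi$ with $p_\zeta\restriction\xi \Vdash p_\zeta(\xi) \in \dot{Q}^\mbf_{\xi,\eta_\zeta(\xi)}$. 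As $|R^*|<\omega$ and $\prod_{\xi\in R^*}\theta^\mbf_\xi<\kappa$, pigeonhole gives $K\in[K_0]^\kappa$ and a fixed $\eta:R^*\to\kappa$ with $\eta_\zeta\equiv\eta$ for $\zeta\in K$. It suffices to prove that the strengthened family $\{p_\zeta : \zeta\in K\}$ is uf-linked.

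\textbf{Stage 2 (main lemma).} Fix a non-principal ultrafilter $D$ on $\omega$ and a countable subsequence $\bar{p}=\la p_{\zeta_n} : n<\omega\ra$. I would prove by induction on $\xi\leq\pi$ that there is a condition $q_\xi\in\Por_{\alpha,\xi}$ with $q_\eta=q_\xi\restriction\eta$ for $\eta<\xi$, together with a $\Por_{\alpha,\xi}$-name $\dot D_\xi$ forced by $q_\xi$ to be an ultrafilter on $\omega$ extending $D$ and containing $\dot W_{\Por_{\alpha,\xi}}(\bar p\restriction\xi)$, and moreover, a coherent ``shadow'' name $\dot D^\Delta_\xi$ over $V^{\Por_{\Delta(\xi),\xi}}$ that is related to $\dot D_\xi$ through Lemma \ref{simpleufextension}. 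Setting $q:=q_\pi$ then witnesses that $\bar p$ is $D$-linked, establishing uf-linkedness.

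\textbf{Stage 3 (inductive step) and main obstacle.} If $\xi\notin R^*$, at most one of the $p_{\zeta_n}$ has $\xi$ in its domain (since the $\dom p_{\zeta_n}\smallsetminus R^*$ are pairwise disjoint), so $q_{\xi+1}=q_\xi$ and the ultrafilters extend trivially. The decisive case is $\xi\in R^*$: in $V^{\Por_{\alpha,\xi}}$ below $q_\xi$, the sequence $\la p_{\zeta_n}(\xi):n<\omega\ra$ lies in the interpretation of $\dot Q^\mbf_{\xi,\eta(\xi)}$, which by the $<\kappa$-uf-extendability is forced (over $V^{\Por_{\Delta(\xi),\xi}}$) to be uf-linked in $\Qnm_\xi$. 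Working first one level below in $V^{\Por_{\Delta(\xi),\xi+1}}$, use the $\dot D^\Delta_\xi$-linkedness of $\dot Q^\mbf_{\xi,\eta(\xi)}$ to pick $q_{\xi+1}(\xi)$ forcing that $\dot W_{\xi+1}$ enters an ultrafilter extending $\dot D^\Delta_\xi$; then invoke Lemma \ref{simpleufextension} with $M=V^{\Por_{\Delta(\xi),\xi+1}}$ and $N=V^{\Por_{\alpha,\xi+1}}$ to lift this to an ultrafilter $\dot D_{\xi+1}$ in $V^{\Por_{\alpha,\xi+1}}$ also extending $\dot D_\xi$. Limit stages use finite support, direct limits (Lemma \ref{realint}), and the preservation principle of Lemma \ref{parallellim}, taking unions of the $q_\eta$ and of the $\dot D_\eta$. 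The main difficulty lies exactly in this two-level bookkeeping: uf-linkedness of $\Qnm_\xi$ is available only over $V^{\Por_{\Delta(\xi),\xi}}$, while the conditions live at level $\alpha$, so the ultrafilter must first be built at the $\Delta(\xi)$-level and then transported up via Lemma \ref{simpleufextension} while preserving coherence across all earlier $\xi$. This simultaneous construction, carried out recursively, is what the introduction refers to as building a ``matrix of ultrafilters along the matrix iteration''.
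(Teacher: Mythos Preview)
Your overall plan---$\Delta$-system reduction to a uniform $\kappa$-sized family, then a ``main lemma'' about countable subsequences via an inductive construction of ultrafilters along $\xi\leq\pi$---matches the paper's architecture exactly, and you correctly identify the central difficulty: uf-linkedness of $\Qnm_\xi$ is only available in $V^{\Por_{\Delta(\xi),\xi}}$, not in $V^{\Por_{\alpha,\xi}}$. However, your Stage~3 sketch has two genuine gaps.

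First, your uniformization is done at the wrong level. You arrange that $p_\zeta\frestr\xi$ (a condition in $\Por_{\alpha,\xi}$) decides the index $\eta_\zeta(\xi)$, but you later need the sequence $\la p_{\zeta_n}(\xi):n<\omega\ra$ to be evaluable and to sit in a fixed $\dot Q_{\xi,\eta(\xi)}$ \emph{inside} $V^{\Por_{\Delta(\xi),\xi}}$, since that is where uf-linkedness lives. This requires each $p_\zeta(\xi)$ to be a $\Por_{\Delta(\xi),\xi}$-name and the index to be decided by $\Por_{\Delta(\xi),\xi}$, not merely by $p_\zeta\frestr\xi$. The paper handles this by working in the dense subset $\Por^*_{\alpha,\pi}$ (Definition~\ref{Defunifdeltasys}) from the outset.

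Second, and more seriously, a single ``shadow'' $\dot D^\Delta_\xi$ at the moving level $\Delta(\xi)$ is not enough. To apply Lemma~\ref{simpleufextension} at the successor step you need $\dot D^\Delta_\xi\subseteq\dot D_\xi$, but the restriction of $\dot D_\xi$ to $V^{\Por_{\Delta(\xi),\xi}}$ is only a $\Por_{\alpha,\xi}$-name, not a $\Por_{\Delta(\xi),\xi}$-name, so you cannot simply define the shadow at stage $\xi$. The paper's solution is to carry, by recursion on $\xi$, an entire \emph{column} of ultrafilters $\la\dot D_{\beta,\xi}:\beta\leq\alpha\ra$, together with conditions $q_{\beta,\xi}$, satisfying $q_{\beta,\xi}\Vdash\dot W_{\Por_{\beta,\xi}}(\bar p\frestr(\beta,\xi))\in\dot D_{\beta,\xi}$ at \emph{every} level $\beta$. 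At a limit stage $\eta$ this column is rebuilt by an inner recursion on $\beta$, using Lemma~\ref{linearufextension} at the bottom and Lemma~\ref{limufextension} to step from each level to the next; Lemmas~\ref{realint} and~\ref{parallellim}, which you cite, play no role here. (One can get by with the finite set of levels $\{\Delta(\xi):\xi\in R^*\}\cup\{\alpha\}$, but you still need the full two-dimensional coherence, not a single shadow that jumps between levels.)
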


Throughout this section, wlog we may assume that $I^\mbf=\gamma^\mbf$ is an ordinal (and again, we may omit the upper index).

The following is a version of the preceding result where the preserved strongly unbounded family is constructed within the matrix.

\begin{theorem}\label{mainprescohen}
   Let $\kappa\leq\mu$ be uncountable regular cardinals and let $\mbf$ be a ${<}\kappa$-uf-extendable matrix iteration. Assume that
   \begin{enumerate}[(i)]
       \item $\gamma^\mbf>\mu$ and $\pi^\mbf\geq\mu$,
       \item for each $\alpha<\mu$, $\Delta^\mbf(\alpha)=\alpha+1$ and $\Qnm^\mbf_{\alpha}=\Cor$, and
       \item $\dot{c}_\alpha$ is the $\Por_{\alpha+1,\alpha+1}$-name of the Cohen real added by $\Qnm^\mbf_{\alpha}$.
   \end{enumerate}
   Then, for any $\nu\in[\mu,\gamma^\mbf)$, $\Por_{\nu,\pi}$ forces that $\{\dot{c}_{\alpha}:\alpha<\mu\}$ forms a $\mu$-$\mathbf{D}$-strongly unbounded family.
\end{theorem}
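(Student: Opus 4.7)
The plan is to argue directly using the $\kappa$-chain condition of $\Por_{\nu,\pi}$ together with the Cohen structure at the first $\mu$ columns. By Theorem \ref{mainpres}, $\Por_{\nu,\pi}$ is $\kappa$-$\Fr$-Knaster, in particular $\kappa$-cc. Let $\dot{h}$ be any $\Por_{\nu,\pi}$-name for an element of $\omega^\omega$; by $\kappa$-cc, $\dot{h}$ is determined by countably many antichains of size $<\kappa$, whose finite supports union to a set $B_0\subseteq\pi$ of size $<\kappa$. Closing $B_0$ under the dependencies of the iterands $\Qnm_{\nu,\xi}$ (each a $\Por_{\nu,\xi}$-name which, again by $\kappa$-cc at stage $\xi$, involves only $<\kappa$ further positions) in countably many steps yields $B\subseteq\pi$ with $|B|<\kappa$ that is closed under these dependencies, so that $\Por_{\nu,B}\lessdot\Por_{\nu,\pi}$ and $\dot{h}$ is equivalent to a $\Por_{\nu,B}$-name.

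Next, I would show that for each $\alpha<\mu$ with $\alpha\notin B$, the Cohen real $\dot{c}_\alpha$ is Cohen over $V^{\Por_{\nu,B}}$. The key point is that $\Qnm_{\nu,\alpha}=\Cor$ is a fixed ground-model poset (not a proper name), and since $B$ is closed under dependencies with $\alpha\notin B$, no iterand $\Qnm_{\nu,\xi}$ for $\xi\in B$ depends on position $\alpha$. Hence the FS iteration $\Por_{\nu,B\cup\{\alpha\}}$ admits a natural factorization as $\Por_{\nu,B}\times\Cor$, and the restriction of the $\Por_{\nu,\pi}$-generic to $\Por_{\nu,B\cup\{\alpha\}}$ yields a pair whose second coordinate is exactly $\dot{c}_\alpha$.

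Since no Cohen real over a model is $\leq^*$-dominated by any real in that model, it follows that $\dot{c}_\alpha\not\leq^*\dot{h}$ in $V^{\Por_{\nu,\pi}}$ for every $\alpha\in\mu\setminus B$, whence $\{\alpha<\mu:\dot{c}_\alpha\leq^*\dot{h}\}\subseteq B\cap\mu$ has size $<\kappa\leq\mu$. Combined with the obvious $|\{\dot{c}_\alpha:\alpha<\mu\}|=\mu$ (preserved because $\Por_{\nu,\pi}$ is $\mu$-cc), this gives that $\{\dot{c}_\alpha:\alpha<\mu\}$ is a $\mu$-$\mathbf{D}$-strongly unbounded family. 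The main technical obstacle is verifying the closure of $B$ and the factorization $\Por_{\nu,B\cup\{\alpha\}}\cong\Por_{\nu,B}\times\Cor$; these are standard FS-iteration maneuvers but rely crucially on hypothesis (ii), namely that $\Qnm_{\nu,\alpha}$ is the \emph{fixed} poset $\Cor$ rather than a proper name depending on prior stages.
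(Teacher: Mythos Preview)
There is a genuine gap in the closure step. You claim that each iterand $\Qnm_{\nu,\xi}$, being a $\Por_{\nu,\xi}$-name, ``by $\kappa$-cc involves only $<\kappa$ further positions''. But $\kappa$-cc bounds the support of names for \emph{single reals}, not of names for entire posets. The theorem is stated for arbitrary $<\kappa$-uf-extendable matrix iterations, and in the applications (e.g.\ Theorem~\ref{Apl1}) the iterands at columns $\xi\geq\mu$ include $\Eor^{V_{\Delta(\xi),\xi}}$, a poset of size continuum in the intermediate model whose name depends on every real there---in particular on $c_\alpha$ for every $\alpha<\Delta(\xi)$. Closing $B_0$ under iterand-dependencies then forces $|B|\geq\mu$, and the factorization $\Por_{\nu,B\cup\{\alpha\}}\cong\Por_{\nu,B}\times\Cor$ cannot be arranged for the $\alpha$ you need.

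This is not a technicality but the whole point of the theorem. Pawlikowski \cite{Paw-Dom} showed that a restricted $\Eor$-generic \emph{can} add a dominating real over the larger model, so an individual $\dot{c}_\alpha$ may well be dominated by a real appearing later in the iteration; there is in general no small complete suborder capturing $\dot{h}$ over which $\dot{c}_\alpha$ remains Cohen. What must be shown is only that no single $\dot{h}$ dominates $\mu$-many of the $\dot{c}_\alpha$ at once, and for that the paper invokes Main Lemma~\ref{main} directly: assuming witnesses $p_\alpha\Vdash\forall n\geq m(\dot{c}_\alpha(n)\leq\dot{h}(n))$ for $\mu$-many $\alpha$, one refines to a uniform $\Delta$-system in $\Por^*_{\nu,\pi}$ with constant stem $p_\alpha(\alpha)=t$ of length $m'\geq m$, picks $\la\alpha_n:n<\omega\ra$, and modifies $p'_n(\alpha_n):=t\cup\{(m',n)\}$; the Main Lemma then yields a $q$ forcing $p'_n\in\dot{G}$ for infinitely many $n$, hence $\dot{c}_{\alpha_n}(m')=n\leq\dot{h}(m')$ for infinitely many $n$, a contradiction.
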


For the proof of both results, we need to work with special conditions of the matrix and with $\Delta$-systems.

\begin{definition}\label{Defunifdeltasys}
   Let $\kappa$ be a regular uncountable cardinal and let $\mbf$ be a ${<}\kappa$-uf-extendable matrix iteration. Let $\beta<\gamma$ and $\eta\leq\pi$.
   \begin{itemize}
       \item[(1)] Define $\Por^+_{\beta,\eta}=\Por^{+\mbf}_{\beta,\eta}$ as the set of conditions $p\in\Por_{\beta,\eta}$ such that,
       for each $\xi\in\dom p$ with $\Delta(\xi)\leq\beta$, $p(\xi)$ is a $\Por_{\Delta(\xi),\xi}$-name.

       Define $\Por^*_{\beta,\eta}=\Por^{*\mbf}_{\beta,\eta}$ as the set of conditions $p\in\Por^+_{\beta,\eta}$ such that,
       for each $\xi\in\dom p$ with $\Delta(\xi)\leq\beta$, there is a $\zeta=\zeta_{p(\xi)}<\theta_\xi$ such that $\Por_{\Delta(\xi),\xi}$ forces that $p(\xi)\in\dot{Q}_{\xi}(\zeta)$.

       Note that $\Por^+_{\beta,\eta}$ is a dense subset of $\Por_{\beta,\eta}$, and $\Por^*_{\beta,\eta}$ is a dense subset of $\Por^+_{\beta,\eta}$.

       \item[(2)] For each $p\in\Por^+_{\beta,\eta}$, $\alpha\leq\beta$ and $\xi\leq\eta$, $p\frestr(\alpha,\xi)$ is the condition in $\Por^+_{\alpha,\xi}$ defined by
       \begin{itemize}
           \item[(i)] $\dom(p\frestr(\alpha,\xi))=\dom p\cap\xi$, and
           \item[(ii)] for each $\xi'\in\dom(p\frestr(\alpha,\xi))$, \[p\frestr(\alpha,\xi)(\xi')=\left\{\begin{array}{ll}
                p(\xi') &  \text{if $\Delta(\xi')\leq\alpha$,}\\
                \mathds{1} & \text{otherwise.}
           \end{array}\right.\]
       \end{itemize}
       Note that $p\frestr(\alpha,\xi)\in\Por^*_{\alpha,\xi}$ whenever $p\in\Por^*_{\beta,\eta}$

       \item[(3)] A \emph{uniform $\Delta$-system in $\Por^*_{\beta,\eta}$} is a sequence $\bar{p}=\la p_i:i\in J\ra$ of conditions in $\Por^*_{\beta,\eta}$ such that
       \begin{itemize}
           \item[(i)] $\la \dom p_i:i\in J\ra$ forms a $\Delta$-system with root $R^*$, and
           \item[(ii)] for each $\xi\in R^*$ there is a $\zeta^*_\xi<\theta_\xi$ such that $\Por_{\Delta(\xi),\xi}$ forces that $p_i(\xi)\in\dot{Q}_{\xi}(\zeta^*_\xi)$ for all $i\in J$.
       \end{itemize}
%       \item[(4)] An \emph{$\mbf$-matrix of ultrafilters} is a matrix  $\Dbf=\la\dot{D}_{\alpha,\xi}:\alpha\leq\gamma,\beta\leq\pi\ra$ of names such that
%       \begin{itemize}
%           \item[(i)] each $\dot{D}_{\beta,\eta}$ is a $\Por_{\beta,\eta}$-name of an ultrafilter on $\omega$, and
%           \item[(ii)] for any $\alpha\leq\beta$ and $\xi\leq\eta$, $\Por_{\beta,\eta}$ forces that $\dot{D}_{\alpha,\xi}\subseteq\dot{D}_{\beta,\eta}$.
%       \end{itemize}

%       \item[(5)] Given an $\mbf$-matrix of ultrafilters $\Dbf$ and a countable uniform $\Delta$-system $\bar{p}=\la p_n:n<\omega\ra$ in $\Por^*_{\beta,\eta}$, say that $q\in\Por_{\beta,\gamma}$ is a \emph{$\Dbf$-extendable condition for $\bar{p}$} if
%       \begin{itemize}
%          \item[(i)] $\dom q:=\{-1\}\cup\{\xi^*_j:j\in v^*\}$ (the root of the $\Delta$-system),
%          \item[(ii)] $q(-1)=\lim_n p_n(-1)$ \rojo{define this somewhere},
%          \item[(iii)] for each $j\in v^*$, $\Por^*_{\Delta(\xi^*_j),\xi^*_j}$ forces that $q(\xi^*_j)$ is a $D_{\Delta(\xi^*_j),\xi^*_j}$-extendable condition for $\la p_n(\xi^*_j):n<\omega\ra$.
%       \end{itemize}
%       Note that this definition does not depend on the handrail $h$.

%       \item[(6)] The matrix iteration $\mbf$ \emph{solves all its uniform $\Delta$-systems} if, for each countable uniform $\Delta$-system $\bar{p}$, there exists an $\mbf$-matrix of ultrafilters $\Dbf$ and a $\Dbf$-extendable condition for $\bar{p}$ that forces $\{n<\omega:p_n\in\dot{G}\}\in\dot{D}_{\gamma,\pi}$.
    \end{itemize}
    Note that $\Por^+_{\beta,\eta}$ and $p\frestr(\beta,\eta)$ can be defined for simple matrix iterations.
\end{definition}

The core of our main result is the following lemma.

\begin{mainlemma}\label{main}
   Let $\mbf$ be a ${<}\kappa$-uf-extendable matrix iteration with sequences of names as in Definition~\ref{Remufmatrixccc} (without assuming that $\kappa$ is regular). If $\nu\in I^\mbf$ and $\bar{p}=\la p_n:n<\omega\ra$ is a uniform $\Delta$-system in $\Por^*_{\nu,\pi}$ then there is a $q\in\Por_{\nu,\pi}$ forcing that $\dot{W}_{\Por_{\nu,\pi}}(\bar{p})$ is infinite. Moreover, if $D$ is a non-principal ultrafilter
on $\omega$ in the ground model then there is some $q\in\Por^+_{\nu,\pi}$ that forces $\dot{W}_{\Por_{\nu,\pi}}(\bar{p})\in D^+$.
\end{mainlemma}

\begin{proof}[Proof of Theorem~\ref{mainpres}]
   Let $\la p_\zeta:\zeta<\kappa\ra$ be a sequence of conditions in $\Por_{\alpha,\pi}$. For each $\zeta<\kappa$ find a $p'_\zeta\in\Por^*_{\alpha,\pi}$ stronger than $p_\zeta$. By the $\Delta$-system lemma and some easy combinatorial arguments, we can find a $K\subseteq\kappa$ of size $\kappa$ such that $\{p'_\zeta:\zeta\in K\}$ forms a uniform $\Delta$-system in $\Por^*_{\alpha,\pi}$. Therefore, by Main Lemma~\ref{main}, $\{p'_\zeta:\zeta\in K\}$ is uf-linked. Hence, $\{p_\zeta:\zeta\in K\}$ is uf-linked.
\end{proof}

\begin{comment}
\begin{theorem}\label{matuflinked}
   Let $\mu$ be an infinite cardinal and let $\mbf$ be a ${<}\mu^+$-uf-extendable matrix iteration. If $\pi^\mbf<(2^\kappa)^+$ then $\Por_{\gamma,\pi}$ is $\mu$-uf-linked.
\end{theorem}
\begin{proof}
   Wlog we may assume that $\theta_\xi=\theta^\mbf_\xi=\mu$ for any $\xi<\pi$. Consider the following result.

   \begin{lemma}[Engelking and Kar\l owicz {\cite{TopThm}}]\label{EngKar}
     If $\mu$ is an infinite cardinal and $I$ is a set of size $\leq 2^\mu$ then there exists a set $H\subseteq\mu^I$ of size $\leq\mu$ such that any finite partial function from $I$ to $\mu$ is extended by some member of $H$.
  \end{lemma}

  Find $H\subseteq\mu^\pi$ as in the lemma. For each
\end{proof}
\end{comment}

\begin{proof}[Proof of Theorem~\ref{mainprescohen}]
   First note that $\Por_{\nu,\mu}$ results from a FS iteration of length $\mu$ of Cohen forcing and that, for each $\alpha<\mu$, $\dot{c}_{\alpha}$ is forced to be a Cohen real over $V_{\nu,\alpha}$. Even more, we can assume that $\theta_\alpha=\aleph_0$, $\dot{Q}_{\alpha}(n)$ is a singleton (in the ground model, not just a name), and $\omega^{<\omega}=\bigcup_{n<\omega}\dot{Q}_{\alpha}(n)$. Hence, $\Por^*_{\nu,\mu}=\Por^*_{\mu,\mu}=\Cor_\mu$.

   Towards a contradiction, assume that there is a $\Por_{\nu,\pi}$-name $\dot{h}$ of a real in $\omega^\omega$ and a $p\in\Por_{\nu,\pi}$ such that $p\Vdash_{\nu,\pi}|\{\alpha<\mu:\dot{c}_\alpha\leq^*\dot{h}\}|\geq\mu$. Find $K\subseteq\mu$ of size $\mu$, a family of conditions $\{p_\alpha:\alpha\in K\}\subseteq\Por^*_{\nu,\pi}$ and a natural number $m$ such that, for each $\alpha\in K$, $\alpha\in\dom p_\alpha$, $p_\alpha\leq p$ and $p_\alpha\Vdash\forall n\geq m(\dot{c}_\alpha(n)\leq\dot{h}(n))$. Wlog, also assume that $|p(\alpha)|\geq m$ for all $\alpha\in K$. By the $\Delta$-system lemma and some easy combinatorial arguments, we can find $K'\subseteq K$ of size $\mu$ such that $\{p_\alpha:\alpha\in K'\}$ forms a uniform $\Delta$-system in $\Por^*_{\nu,\pi}$ and there is some $t\in\omega^{<\omega}$ of length $m'\geq m$ such that, for all $\alpha\in K'$, $p_\alpha(\alpha)=t$. Choose $\{\alpha_n:n<\omega\}\subseteq K'$ (one-to-one enumeration).
   Define $p'_n$ identical to $p_{\alpha_n}$ with the sole difference that $p'_n(\alpha):=p_{\alpha_n}(\alpha)\cup\{(m',n)\}$. Note that $\bar{p}'=\la p'_n:n<\omega\ra$ forms a countable uniform $\Delta$-system. Therefore, by Main Lemma~\ref{main}, there is a condition $q\in\Por_{\nu,\pi}$ such that $q\Vdash$``$\dot{W}(\bar{p}')$ is infinite", so $q$ forces that $\exists^\infty n<\omega(\dot{c}_{\alpha_n}(m')=n\leq\dot{h}(m'))$, which is a contradiction.
\end{proof}

We now focus on the proof of Main Lemma~\ref{main}. We start with some preliminary results before developing the proof.

\begin{lemma}\label{projprop}
   Let $\mbf$ be a simple matrix iteration, $\alpha\leq\beta<\gamma$ and $\xi\leq\eta\leq\pi$. Then:
   \begin{enumerate}[(a)]
       \item For any $p\in\Por^+_{\beta,\eta}$, if $q\leq p\frestr(\alpha,\xi)$ in $\Por^+_{\alpha,\xi}$, then there is some $p'\leq p$ in $\Por^+_{\beta,\eta}$ such that $q=p'\frestr(\alpha,\xi)$.
       \item If $\beta$ is limit and $\beta\notin\ran\Delta^\mbf$ then $\Por^+_{\beta,\xi}=\limdir_{\alpha<\beta}\Por^+_{\alpha,\xi}$.
   \end{enumerate}
   Even more, similar statements hold for $\Por^*_{\beta,\eta}$ when $\mbf$ is a ${<}\kappa$-uf-extendable matrix iteration.
\end{lemma}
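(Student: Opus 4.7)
My plan is to prove (a) by explicitly gluing $q$ and $p$, and (b) by exhibiting, for each $p\in\Por^+_{\beta,\xi}$, an $\alpha<\beta$ witnessing that $p\in\Por^+_{\alpha,\xi}$. The starring variants follow by the same constructions since the extra clause in Definition~\ref{Defunifdeltasys}(1) is preserved coordinate by coordinate. The only subtlety comes from keeping track, at each coordinate $\xi'$, of three regimes: $\Delta(\xi')\leq\alpha$ (nontrivial iterand at all matrix levels we care about), $\alpha<\Delta(\xi')\leq\beta$ (trivial at level $\alpha$, nontrivial at level $\beta$), and $\Delta(\xi')>\beta$ (trivial everywhere).

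For (a), given $p\in\Por^+_{\beta,\eta}$ and $q\leq p\frestr(\alpha,\xi)$ in $\Por^+_{\alpha,\xi}$, I would set $\dom p':=\dom q\cup\dom p$ and, for each $\xi'\in\dom p'$, define
\[
p'(\xi'):=\begin{cases} q(\xi') & \text{if }\xi'<\xi\text{ and }\Delta(\xi')\leq\alpha,\\ p(\xi') & \text{otherwise.}\end{cases}
\]
This is unambiguous because whenever $\xi'<\xi$ and $\Delta(\xi')>\alpha$, the iterand $\Qnm_{\alpha,\xi'}$ is trivial, so $q(\xi')=\mathds{1}$ and no conflict with $p(\xi')$ arises. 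Three verifications remain. First, $p'\in\Por^+_{\beta,\eta}$: the name-condition $p'(\xi')\in V^{\Por_{\Delta(\xi'),\xi'}}$ for $\Delta(\xi')\leq\beta$ holds because values copied from $p$ (resp.\ from $q$) already satisfy it by $p\in\Por^+_{\beta,\eta}$ (resp.\ by $q\in\Por^+_{\alpha,\xi}$, noting that any $\Por_{\Delta(\xi'),\xi'}$-name remains a $\Por_{\Delta(\xi'),\xi'}$-name). Second, $p'\leq p$: for $\xi'\in\dom p$ with $\xi'\geq\xi$ or with $\Delta(\xi')>\alpha$ we copied $p(\xi')$ exactly; for $\xi'\in\dom p\cap\xi$ with $\Delta(\xi')\leq\alpha$ we use that $q\leq p\frestr(\alpha,\xi)$ and that $(p\frestr(\alpha,\xi))(\xi')=p(\xi')$ in this regime. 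Third, $p'\frestr(\alpha,\xi)=q$: since $\dom q\supseteq\dom(p\frestr(\alpha,\xi))=\dom p\cap\xi$, the domain of $p'\frestr(\alpha,\xi)$ equals $\dom q$, and the case split recovers $q(\xi')$ for $\Delta(\xi')\leq\alpha$ while returning $\mathds{1}=q(\xi')$ for $\Delta(\xi')>\alpha$.

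For (b), the containment $\bigcup_{\alpha<\beta}\Por^+_{\alpha,\xi}\subseteq\Por^+_{\beta,\xi}$ is automatic from $\Por_{\alpha,\xi}\lessdot\Por_{\beta,\xi}$ together with the fact that any $\Por_{\Delta(\xi'),\xi'}$-name is again such a name at the larger matrix level. For the reverse inclusion, fix $p\in\Por^+_{\beta,\xi}$ and consider the finite set $F:=\{\Delta(\xi'):\xi'\in\dom p,\ \Delta(\xi')\leq\beta\}$. Since $\beta\notin\ran\Delta$ by hypothesis, each element of $F$ is strictly below $\beta$; since $\beta$ is a limit ordinal and $F$ is finite, we can pick $\alpha<\beta$ with $\alpha>\max F$. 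At any $\xi'\in\dom p$ with $\Delta(\xi')\leq\alpha$ the value $p(\xi')$ is, by assumption, a $\Por_{\Delta(\xi'),\xi'}$-name of an element of $\Qnm_{\xi'}$, so it is a legitimate value in $\Por_{\alpha,\xi}$; at any $\xi'\in\dom p$ with $\Delta(\xi')>\alpha$ we have $\Delta(\xi')>\beta$ (by the choice of $\alpha$ relative to $F$) so $p(\xi')=\mathds{1}$, which is automatically fine at level $\alpha$. Thus $p\in\Por^+_{\alpha,\xi}$.

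The statement for $\Por^*$ requires no new argument: in (a), the witness $\zeta_{p'(\xi')}$ is inherited from $\zeta_{p(\xi')}$ or $\zeta_{q(\xi')}$ according to the case; in (b), the witnesses $\zeta_{p(\xi')}$ at nontrivial coordinates belong to $V$ and are unaffected by descending from level $\beta$ to level $\alpha$. The main place where I expect to need extra care is the bookkeeping that a $\Por_{\Delta(\xi'),\xi'}$-name is correctly interpreted at both levels $\alpha$ and $\beta$, for which the complete embedding $\Por_{\Delta(\xi'),\xi'}\lessdot\Por_{\alpha,\xi'}\lessdot\Por_{\beta,\xi'}$ coming from the matrix structure is exactly what is needed.
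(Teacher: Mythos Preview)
Your proof is correct and, for part (a), matches the paper's construction essentially verbatim (the paper also glues $q$ and $p$ on $\dom q\cup\dom p$, using $q(\xi')$ when $\xi'\in\dom q$ and $\Delta(\xi')\leq\alpha$, else $p(\xi')$). For part (b) the paper proceeds by induction on $\xi$---handling one coordinate at a time and invoking the inductive hypothesis to absorb $p\frestr(\beta,\xi)$ into some $\Por^+_{\alpha,\xi}$ with $\alpha\geq\Delta(\xi)$---whereas you give a direct argument, choosing a single $\alpha<\beta$ above the finite set $\{\Delta(\xi'):\xi'\in\dom p,\ \Delta(\xi')\leq\beta\}$. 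Your route is slightly more economical and makes the role of the hypothesis $\beta\notin\ran\Delta$ more transparent; the paper's inductive version has the minor advantage that it never needs to verify all coordinates simultaneously as a valid condition in $\Por_{\alpha,\xi}$ (a point you implicitly rely on, but which follows by the same coordinate-by-coordinate reasoning). One small quibble: your claim $\dom q\supseteq\dom p\cap\xi$ is not literally guaranteed by $q\leq p\frestr(\alpha,\xi)$, but is harmless since you may first pad $q$ with $\mathds{1}$'s on the missing coordinates.
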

\begin{proof}
   To see (a), define $p'$ such that $\dom p'=\dom p\cup\dom q$ and $p'(\xi)$ is determined by the following cases: when $\xi\in\dom p\smallsetminus\dom q$, $p'(\xi):=p(\xi)$; when $\xi\in\dom q$, put $p'(\xi):=p(\xi)$ if $\alpha<\Delta(\xi)$, otherwise $p'(\xi):=q(\xi)$.

   Now we show (b) by induction on $\xi$. The case $\xi=0$ and the limit step are immediate. For the successor step, assume that $\Por^+_{\beta,\xi}=\limdir_{\alpha<\beta}\Por^+_{\alpha,\xi}$. If $\beta<\Delta(\xi)$ then $\Por^+_{\alpha,\xi+1}=\Por^+_{\alpha,\xi}\ast\mathds{1}$ for any $\alpha\leq\beta$, so the conclusion follows; if $\Delta(\xi)\leq\beta$ then $\Delta(\xi)<\beta$ (because $\Delta(\xi)\neq\beta$) and, whenever $p\in\Por^+_{\beta,\xi+1}$, by induction hypothesis $p\frestr(\beta,\xi)\in\Por^+_{\alpha,\xi}$ for some $\alpha\in[\Delta(\xi),\beta)$. On the other hand, $p(\xi)$ is a $\Por_{\Delta(\xi),\xi}$-name of a condition in $\Qnm_\xi$, so $p\in\Por^+_{\alpha,\xi+1}$.
\end{proof}

\begin{lemma}\label{linearufextension}
   Let $\Por_\pi=\la\Por_\xi,\Qnm_\xi:\xi<\pi\ra$ be a FS iteration with $\pi$ limit. Assume:
   \begin{enumerate}[(i)]
       \item $\bar{p}=\la p_n:n<\omega\ra$ is a sequence of conditions in $\Por_\pi$.
       \item $\la\dot{D}_{\xi}:\xi<\pi\ra$ is a sequence such that each $\dot{D}_{\xi}$ is a $\Por_{\xi}$-name of a non-principal ultrafilter on $\omega$ that contains $\dot{D}_{\xi_0}$ for any $\xi_0<\xi$.
       \item $q\in\Por_\pi$.
       \item For any $\xi<\pi$, $q\frestr\xi$ forces that $\dot{W}_{\Por_\xi}(\bar{p}\frestr\xi)\in\dot{D}_{\xi}$.
   \end{enumerate}
   Then $q$ forces that $\bigcup_{\xi<\pi}\dot{D}_{\xi}\cup\{\dot{W}_{\Por_\pi}(\bar{p})\}$ can be extended to an ultrafilter.
\end{lemma}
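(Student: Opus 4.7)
The plan is to work in a generic extension $V[G]$ with $q \in G$ and show that, in $V[G]$, the family $\Fwf := \bigcup_{\xi<\pi}\dot{D}_\xi[G] \cup \{\dot{W}_{\Por_\pi}(\bar{p})[G]\}$ has the finite intersection property, which is enough to guarantee that it extends to an ultrafilter. Since the sequence $\la \dot{D}_\xi : \xi<\pi\ra$ is increasing and each $\dot{D}_\xi[G]$ is a filter, the union $\bigcup_{\xi<\pi}\dot{D}_\xi[G]$ is already closed under finite intersections. So it suffices to show that for every $\xi<\pi$ and every $\Por_\xi$-name $\dot{a}$ with $q\Vdash \dot{a}\in \dot{D}_\xi$, the condition $q$ forces $\dot{a}\cap \dot{W}_{\Por_\pi}(\bar{p})$ to be infinite.

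I will establish this by a density argument. Fix such $\xi$ and $\dot{a}$, take any $r\leq q$ in $\Por_\pi$, and fix $n_0<\omega$. Since $\pi$ is a limit and $r$ has finite support, I can pick $\eta^*<\pi$ with $\eta^*>\xi$ and $\dom r\subseteq \eta^*$. Hypothesis (iv) applied at $\eta^*$ tells me that $q\frestr\eta^*$ forces $\dot{W}_{\Por_{\eta^*}}(\bar{p}\frestr\eta^*)\in \dot{D}_{\eta^*}$; combining with $\dot{D}_\xi\subseteq \dot{D}_{\eta^*}$ and the non-principality of $\dot{D}_{\eta^*}$, the condition $r\frestr\eta^*\leq q\frestr\eta^*$ forces the intersection $\dot{a}\cap \dot{W}_{\Por_{\eta^*}}(\bar{p}\frestr\eta^*)$ to be infinite. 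Hence there exist $r'\leq r\frestr\eta^*$ in $\Por_{\eta^*}$ and some $n>n_0$ with
\[
r' \Vdash n\in \dot{a} \ \text{and}\ p_n\frestr\eta^*\in \dot{G}_{\eta^*},
\]
the latter meaning $r'\leq p_n\frestr\eta^*$ in $\Por_{\eta^*}$.

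The final step is to lift $r'$ back to the full iteration so that it also becomes stronger than $p_n$. Define $s\in \Por_\pi$ by $s\frestr\eta^* := r'$ and, on $[\eta^*,\pi)$, $s(\xi') := p_n(\xi')$ for $\xi'\in \dom p_n\smallsetminus\eta^*$ and $s(\xi') := \mathds{1}$ elsewhere. Because $r$ is trivial above $\eta^*$ and $r'\leq r\frestr\eta^*$, we have $s\leq r$; because $r'\leq p_n\frestr\eta^*$ and $s$ agrees with $p_n$ on the rest of $\dom p_n$, we also have $s\leq p_n$. Therefore $s\Vdash n\in \dot{a}\cap \dot{W}_{\Por_\pi}(\bar{p})$. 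Since $r\leq q$ and $n_0$ were arbitrary, density yields that $q$ forces $\dot{a}\cap \dot{W}_{\Por_\pi}(\bar{p})$ to be infinite, as required.

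The only point requiring care is the transition from the intersection being forced infinite in $\Por_{\eta^*}$ to the corresponding statement in $\Por_\pi$; this is precisely what the lifting in the last paragraph accomplishes, and it works because $p_n$ has finite support, so only finitely many coordinates of $s$ above $\eta^*$ need to be adjusted, with the $\Por_\xi$-name $\dot{a}$ being unaffected since its interpretation is determined below $\eta^*$.
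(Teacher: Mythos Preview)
Your proof is correct and follows the same route as the paper's: pick $\eta^*<\pi$ above the supports of $r$ and $q$, apply hypothesis (iv) at $\eta^*$ to find $n$ and $r'\leq r,\,p_n\frestr\eta^*$ in $\Por_{\eta^*}$ with $r'\Vdash_{\eta^*} n\in\dot a$, and then glue on the tail $p_n\frestr[\eta^*,\pi)$ to obtain a condition in $\Por_\pi$ below both $r$ and $p_n$. One small wording point: in your reduction you should assume $r\Vdash\dot a\in\dot D_\xi$ rather than $q\Vdash\dot a\in\dot D_\xi$, since an arbitrary member of $\dot D_\xi[G]$ need not be decided by $q$ itself; your density argument already works verbatim under this weaker hypothesis, so nothing else changes.
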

\begin{proof}
   Let $r\leq q$ in $\Por_\pi$ and $\dot{b}$ a $\Por_\pi$-name of a member of $\bigcup_{\xi<\pi}\dot{D}_{\xi}$. Wlog (by strengthening $r$ if necessary), we may assume that there is a $\xi<\pi$ such that $r,q\in\Por_\xi$ and $\dot{b}$ is (forced to be equal to) a $\Por_\xi$-name of a member of $\dot{D}_\xi$. By (iv), there are some $r'\leq r$ in $\Por_\xi$ and an $n<\omega$ such that $r'\leq p_n\frestr\xi$ and $r'\Vdash_\xi n\in\dot{b}$. Hence, $q':=r'\cup p_n\frestr[\xi,\pi)$ forces in $\Por_\pi$ that $n\in\dot{b}\cap\dot{W}_{\Por_\pi}(\bar{p})$.
\end{proof}

\begin{lemma}\label{limufextension}
   Let $\sbf$ be a simple matrix iteration with $I^\sbf=\{0,1\}$. Assume:
   \begin{enumerate}[(i)]
       \item $\pi=\pi^\sbf$ is limit.
       %\item %For any $\xi<\pi$ either $\Por_{1,\xi}$ forces that $\Qnm^1_\xi=\Qnm^0_\xi$, or $\Qnm^0_\xi$ is the trivial poset. Let $\Qnm_\xi:=\Qnm^1_\xi$ and let $C\subseteq\pi$ be the set of $\xi$ that satisfies the first case.
             %$C:=\{\xi<\pi:\Delta^{\mbf}(\xi)=1\}$, that is, for any $\xi<\pi$, if $\xi\in C$ then $\Por_{1,\xi}$ forces that $\Qnm^1_\xi=\Qnm^0_\xi$, or else, if $\xi\notin C$ then $\Qnm^0_\xi$ is the trivial poset.
       \item $\bar{p}=\la p_n:n<\omega\ra$ is a sequence of conditions in $\Por^+_{1,\pi}$.%, where $\Por^+_{1,\xi}$ ($\xi\leq\pi$) is defined as the set of conditions $p\in\Por^+_{1,\xi}$ such that, for any $\xi_0\in\dom p\cap C$, $p(\xi_0)$ is a $\Por_{0,\xi_0}$-name of a condition in $\Qnm_{\xi_0}$. Denote $\Por^+_{0,\xi}:=\Por_{0,\xi}$.
       \item $\la\dot{D}_{i,\xi}:i<2,\xi<\pi\ra$ is a sequence such that each $\dot{D}_{i,\xi}$ is a $\Por_{i,\xi}$-name of a non-principal ultrafilter on $\omega$ that contains $\dot{D}_{i_0,\xi_0}$ for any $i_0\leq i$ and $\xi_0\leq\xi$.
       \item $\dot{D}_{0,\pi}$ is a $\Por_{0,\pi}$-name of an ultrafilter containing $\bigcup_{\xi<\pi}\dot{D}_{0,\xi}$.
       \item $q\in\Por^+_{1,\pi}$.
       %\item For $i\leq1$, $\xi\leq\pi$ and $p\in\Por^+_{1,\pi}$, $p\frestr(i,\xi)$ is defined as the condition in $\Por^+_{i,\xi}$ such that $\dom(p\frestr(i,\xi))=\dom p\cap\xi$ and, for any $\xi_0\in\dom p\cap\xi$, $p\frestr(i,\xi)(\xi_0)=p(\xi_0)$ when either $\xi_0\in C$ or $i=1$, otherwise $p\frestr(i,\xi)(\xi_0)$ is the trivial condition.
       \item For any $\xi<\pi$, $q\frestr(1,\xi)$ forces that $\dot{W}_{\Por_{1,\xi}}(\bar{p}\frestr(1,\xi))\in\dot{D}_{1,\xi}$.
       \item $q\frestr(0,\pi)$ forces that $\dot{W}_{\Por_{0,\pi}}(\bar{p}\frestr(0,\pi))\in\dot{D}_{0,\pi}$.
   \end{enumerate}
   Then, $q$ forces that $\dot{D}_{0,\pi}\cup\bigcup_{\xi<\pi}\dot{D}_{1,\xi}\cup\{\dot{W}_{\Por_{1,\pi}}(\bar{p})\}$ can be extended to an ultrafilter. Even more, $\mathds{1}_{\Por_{1,\pi}}$ forces that $\dot{D}_{0,\pi}\cup\bigcup_{\xi<\pi}\dot{D}_{1,\xi}$ can be extended to an ultrafilter.
\end{lemma}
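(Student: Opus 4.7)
The plan is to establish the moreover (ultrafilter-only) claim first via a level-by-level application of Lemma \ref{simpleufextension}, and then upgrade to the main claim by a density argument that forces $\dot{W}_{\Por_{1,\pi}}(\bar{p})$ into the filter using the compatibility hypotheses (vi)--(vii). For the moreover, work in a $\Por_{1,\pi}$-generic extension and fix $\xi<\pi$. The matrix structure guarantees $\Por_{0,\pi}/G_{0,\xi}\lessdot_{V_{1,\xi}}\Por_{1,\pi}/G_{1,\xi}$, so $G_{0,\pi}/G_{0,\xi}$ is $\Por_{0,\pi}/G_{0,\xi}$-generic over $V_{1,\xi}$. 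Apply Lemma \ref{simpleufextension} with $M:=V_{0,\xi}$, $N:=V_{1,\xi}$, $\Por:=\Por_{0,\pi}/G_{0,\xi}\in M$, $D_0:=\dot{D}_{0,\xi}$, $D:=\dot{D}_{1,\xi}$ (extending $D_0$ by (iii)), and $D'_0:=\dot{D}_{0,\pi}$ (extending $D_0$ by (iv)). The conclusion produces, in $V_{1,\xi}[G_{0,\pi}/G_{0,\xi}]\subseteq V_{1,\pi}$, an ultrafilter extending $\dot{D}_{1,\xi}\cup\dot{D}_{0,\pi}$, so this union has the finite intersection property (FIP) in $V_{1,\pi}$. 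Varying $\xi$ and using the nesting in (iii), the whole collection $\dot{D}_{0,\pi}\cup\bigcup_{\xi<\pi}\dot{D}_{1,\xi}$ has FIP and hence extends to an ultrafilter; this is forced by $\mathds{1}_{\Por_{1,\pi}}$ since the argument used neither $\bar p$ nor $q$.

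For the main statement, I additionally need to force $\dot{W}:=\dot{W}_{\Por_{1,\pi}}(\bar{p})$ into this ultrafilter. By a density reduction, it suffices to show: for any $r\leq q$ in $\Por_{1,\pi}$, any $\Por_{0,\pi}$-name $\dot{a}$ for a member of $\dot{D}_{0,\pi}$, and any $\Por_{1,\xi_0}$-name $\dot{b}$ for a member of $\dot{D}_{1,\xi_0}$, there exist $r'\leq r$ in $\Por_{1,\pi}$ and $n<\omega$ with $r'\leq p_n$ and $r'\Vdash n\in\dot{a}\cap\dot{b}$. Without loss of generality $r\in\Por^+_{1,\pi}$, and I enlarge $\xi\geq\xi_0$ so that $\dom r\subseteq\xi$. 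Using (vi) I replace $\dot{b}$ by $\dot{b}\cap\dot{W}_{\Por_{1,\xi}}(\bar{p}\frestr(1,\xi))$ (still in $\dot{D}_{1,\xi}$) and using (vii) replace $\dot{a}$ by $\dot{a}\cap\dot{W}_{\Por_{0,\pi}}(\bar{p}\frestr(0,\pi))$ (still in $\dot{D}_{0,\pi}$). After these replacements, forcing $n\in\dot{a}$ automatically forces $p_n\frestr(0,\pi)\in G_{0,\pi}$, and forcing $n\in\dot{b}$ forces $p_n\frestr(1,\xi)\in G_{1,\xi}$, so much of the compatibility with $p_n$ is automatic.

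Running the argument from the proof of Lemma \ref{simpleufextension} inside a $\Por_{1,\xi}$-generic extension $V_{1,\xi}$ with $r\in G_{1,\xi}$, let $p:=\mathds{1}\in\Por_{0,\pi}/G_{0,\xi}$ and $b'_0:=\{n<\omega:\mathds{1}\Vdash_{\Por_{0,\pi}/G_{0,\xi}}n\notin\dot{a}\}\in V_{0,\xi}$; since $\omega\smallsetminus b'_0\in\dot{D}_{0,\xi}\subseteq\dot{D}_{1,\xi}$, the set $\dot{b}[G_{1,\xi}]\smallsetminus b'_0$ is nonempty, so I pick $n$ in it and choose $s\in\Por_{0,\pi}/G_{0,\xi}$ with $s\Vdash n\in\dot{a}$. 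Back in $V$, assemble $r'$ from three coherent pieces via Lemma \ref{projprop}(a): a strengthening of $r\frestr(1,\xi)$ that includes $p_n\frestr(1,\xi)$ (compatible since $r\Vdash n\in\dot{b}$), a $\Por_{1,\xi}$-name for $s$ further strengthened to include the tail of $p_n\frestr(0,\pi)$ on $[\xi,\pi)$ (compatible since $s\Vdash n\in\dot{a}$), and the raw values of $p_n$ on the remaining coordinates $\xi'\in\dom p_n\cap[\xi,\pi)$ with $\Delta(\xi')=1$ (unobstructed because $r$ is trivial on $[\xi,\pi)$). The main obstacle is the matrix bookkeeping in this last step: one must verify that the three pieces compose into a single condition $r'\in\Por_{1,\pi}$ with $r'\leq r,p_n$ by checking coordinate-by-coordinate compatibility governed by the value of $\Delta(\xi')$, and correctly invoke Lemma \ref{projprop}(a) to lift $s$ from $\Por_{0,\pi}/G_{0,\xi}$ to a condition in the full $\Por_{1,\pi}$ that respects the already-chosen top-row values.
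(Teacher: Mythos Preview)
Your argument for the main statement is essentially the paper's proof with the letters $\dot a$ and $\dot b$ swapped and written semantically (working inside $V_{1,\xi}$) rather than syntactically (working with names and conditions in $V$). The core trick is identical: pull the $\Por_{0,\pi}$-name down to a set $b'_0\in V_{0,\xi}$, show $\omega\smallsetminus b'_0\in\dot D_{0,\xi}\subseteq\dot D_{1,\xi}$ using (vii), intersect inside $\dot D_{1,\xi}$ using (vi), and then assemble the final condition via Lemma~\ref{projprop}(a). Your device of pre-intersecting $\dot a$ and $\dot b$ with the corresponding $\dot W$'s is just a repackaging of the paper's explicit $\dot b_0$; once you observe that $n\in\dot b$ (replaced) already forces $p_n\frestr(1,\xi)\in G_{1,\xi}$ and $s\Vdash n\in\dot a$ (replaced) already forces compatibility with $p_n\frestr(0,\pi)$, the two match up. One small omission: you should also take $\xi$ large enough that $\dom q\subseteq\xi$, not just $\dom r\subseteq\xi$; the paper does this, and it is needed so that $q\frestr(0,\pi)$ sits in the quotient $\Por_{0,\pi}/G_{0,\xi}$ and your contradiction argument for $\omega\smallsetminus b'_0\in\dot D_{0,\xi}$ goes through cleanly.

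Where you genuinely diverge is the ``even more'' clause. The paper obtains it for free as the trivial special case $q=p_n=\mathds 1$ of the main statement. You instead prove it first by applying Lemma~\ref{simpleufextension} with $M=V_{0,\xi}$, $N=V_{1,\xi}$, $\Por=\Por_{0,\pi}/G_{0,\xi}$; but this requires that $G_{0,\pi}/G_{0,\xi}$ be $\Por$-generic \emph{over $V_{1,\xi}$}, i.e., that the matrix square is ``correct'' in the sense $\Por_{0,\pi}/G_{0,\xi}\lessdot_{V_{1,\xi}}\Por_{1,\pi}/G_{1,\xi}$. This correctness property is true for simple matrix iterations, but it is not stated or proved anywhere in the paper, and it does not follow from the mere commutativity of the square of complete embeddings. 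So either supply a proof of this quotient correctness (an induction using the simple-matrix structure and Lemma~\ref{parallellim}), or---more economically---drop your separate argument and derive the ``even more'' as the paper does, as the special case of the main statement with trivial $q$ and $\bar p$.
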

\begin{proof}
   %First note that $\Por^+_{i,\xi}$ is dense in $\Por_{i,\xi}$ and that the corresponding version of Lemma~\ref{projprop}(a) holds.
   We show that, for any $\Por_{1,\pi}$-names $\dot{a}$ and $\dot{b}$ of members of $\bigcup_{\xi<\pi}\dot{D}_{1,\xi}$ and $\dot{D}_{0,\pi}$, respectively, $q$ forces that $\dot{a}\cap\dot{b}\cap\dot{W}_{\Por_{1,\pi}}(\bar{p})\neq\emptyset$. Let $r\leq q$ in $\Por^+_{1,\pi}$. Wlog (by strengthening $r$ if necessary) we may assume that $\dot{b}$ is a $\Por_{0,\pi}$-name and that there is a $\xi<\pi$ such that $\dot{a}$ is a $\Por_{1,\xi}$-name, $r,q\in\Por^+_{1,\xi}$ and $r$ forces that $\dot{a}\in\dot{D}_{1,\xi}$. Consider the $\Por_{0,\xi}$-name
   \[\dot{b}'_0:=\{n<\omega:p_n\frestr(0,\xi)\in\dot{G}_{0,\xi}\text{\ and }p_n\frestr(0,\pi)\Vdash_{\Por_{0,\pi}/\Por_{0,\xi}}n\notin\dot{b}\}.\]
   It is clear that $\Vdash_{0,\pi}\dot{b}\cap\dot{W}_{\Por_{0,\pi}}(\bar{p}\frestr(0,\pi))\cap\dot{b}'_0=\emptyset$ so, by (vii), $r\frestr(0,\xi)$ forces in $\Por_{0,\xi}$ that
   \[\dot{b}_0:=\dot{W}_{\Por_{0,\xi}}(\bar{p}{\upharpoonright}(0,\xi))\menos\dot{b}'_0=\{n<\omega:p_n\frestr(0,\xi)\in\dot{G}_{0,\xi}\text{\ and }p_n\frestr(0,\pi)\nVdash_{\Por_{0,\pi}/\Por_{0,\xi}}n\notin\dot{b}\}\in\dot{D}_{0,\xi}.\]
   Hence $r\Vdash_{1,\xi}\dot{b}_0\in\dot{D}_{1,\xi}$, so by (vi) $r$ forces that
   \[\dot{a}\cap\dot{b}_0\cap\dot{W}_{\Por_{1,\xi}}(\bar{p}\frestr(1,\xi))\in\dot{D}_{1,\xi}.\]
   Find $n<\omega$ and $r'\in\Por^+_{1,\xi}$ stronger than both $r$ and $p_n\frestr(1,\xi)$ such that $r'\Vdash_{1,\xi} n\in\dot{a}\cap\dot{b}_0$. This implies that $r'\frestr(0,\xi)\Vdash_{0,\xi} n\in\dot{b}_0$, so there is a condition $s\leq p_n\frestr(0,\pi)$ in $\Por_{0,\pi}^+$ such that $s\frestr(0,\xi)\leq r'\frestr(0,\xi)$ and $s\Vdash_{0,\pi} n\in\dot{b}$. Now, if we put $p':=r'\cup p_n\frestr[\xi,\pi)$, then $s\leq p'\frestr(0,\pi)$, which implies by Lemma~\ref{projprop}(a) that $s=p''\frestr(0,\pi)$ for some $p''\leq p'$ in $\Por^+_{1,\pi}$. Hence, since $p''$ is stronger than both $s$ and $p'$, $p''$ forces that $n\in\dot{a}\cap\dot{b}$ and $p_n\in\dot{G}_{1,\pi}$.

   The ``even more" statement follows by the particular case when $q$ and every $p_n$ are the trivial condition.
\end{proof}

\begin{proof}[Proof of Main Lemma~\ref{main}]
   Recall that (in this section) $I^\mbf$ is an ordinal. Fix a uniform $\Delta$-system $\bar{p}=\la p_n:n<\omega\ra$ with root $R^*$ of $\Por^*_{\nu,\pi}$ as in Definition~\ref{Defunifdeltasys}(3) and an ultrafilter $D$ on $\omega$ (in the ground model). By recursion on $\xi\leq\pi$ we construct $\Dbf_\xi:=\la\dot{D}_{\alpha,\xi}:\alpha\leq\nu\ra$ and $\la q_{\alpha,\xi}:\alpha\leq\nu\ra$ such that, for any $\alpha\leq\nu$,
   \begin{enumerate}[(a)]
       \item $\dot{D}_{\alpha,\xi}$ is a $\Por_{\alpha,\xi}$-name of a non-principal ultrafilter on $\omega$,
       \item $\Por_{\alpha,\xi}$ forces that $D\subseteq\dot{D}_{\alpha_0,\xi_0}\subseteq\dot{D}_{\alpha,\xi}$ for any $\alpha_0\leq\alpha$ and $\xi_0\leq\xi$,
       \item $q_{\alpha,\xi}\in\Por^+_{\alpha,\xi}$ with domain $R^*\cap\xi$,
       \item $q_{\alpha,\xi}\frestr(\alpha_0,\xi_0)=q_{\alpha_0,\xi_0}$ for any $\alpha_0\leq\alpha$ and $\xi_0\leq\xi$, and
       \item $q_{\alpha,\xi}\Vdash\dot{W}_{\Por_{\alpha,\xi}}(\bar{p}\frestr(\alpha,\xi))\in\dot{D}_{\alpha,\xi}$
   \end{enumerate}
   After the construction, $q:=q_{\nu,\pi}$ is the condition we are looking for.

   \emph{Step $\xi=0$.} As $\Por_{\alpha,0}$ is the trivial poset for any $\alpha\leq\nu$, $D_{\alpha,0}:=D$ and $q_{\alpha,0}:=\mathds{1}$ work.

   \emph{Successor step.} Assume we have succeeded in our construction up to step $\xi$. For $\alpha<\Delta(\xi)$ it is clear that $\Por_{\alpha,\xi+1}\simeq\Por_{\alpha,\xi}$, so $\dot{D}_{\alpha,\xi+1}$ must be $\dot{D}_{\alpha,\xi}$.
   To define $q_{\alpha,\xi+1}$ (for all $\alpha\leq\nu$) and $\dot{D}_{\Delta(\xi),\xi+1}$ we consider two cases. If $\xi\notin R^*$ put $q_{\alpha,\xi+1}=q_{\alpha,\xi}$ and $\dot{D}_{\Delta(\xi),\xi+1}$ can be any $\Por_{\Delta(\xi),\xi+1}$-name of an ultrafilter that contains $\dot{D}_{\Delta(\xi),\xi}$ (so it also contains $\dot{D}_{\alpha,\xi+1}$ for any $\alpha<\Delta(\xi)$); if $\xi\in R^*$, since $\Qnm_{\Delta(\xi),\xi}=\Qnm_\xi$ is a $\Por_{\Delta(\xi),\xi}$-name of a ${<}\kappa$-uf-linked forcing witnessed by $\la\dot{Q}_\xi(\zeta):\zeta<\theta_\xi\ra$, and $\bar{p}(\xi):=\la p_n(\xi):n<\omega\ra$ can be seen as a $\Por_{\Delta(\xi),\xi}$-name of a sequence in $\dot{Q}_{\xi}(\zeta^*_\xi)$, there is a $\Por_{\Delta(\xi),\xi}$-name $q(\xi)$ of a member of $\Qnm_\xi$ such that $\Por_{\Delta(\xi),\xi}$ forces that
   \[q(\xi)\Vdash``\dot{W}_{\Qnm_\xi}(\bar{p}(\xi))\text{\ intersects any member of $\dot{D}_{\Delta(\xi),\xi}$"}.\]
   Put $q_{\alpha,\xi+1}:=q_{\alpha,\xi}\cup\{(\xi,q(\xi))\}$ when $\Delta(\xi)\leq\alpha\leq\nu$, otherwise $q_{\alpha,\xi+1}:=q_{\alpha,\xi}\cup\{(\xi,\mathds{1})\}$, and choose $\dot{D}_{\Delta(\xi),\xi+1}$ as a $\Por_{\Delta(\xi),\xi+1}$-name of an ultrafilter that contains $\dot{D}_{\Delta(\xi),\xi}$ and such that $q_{\Delta(\xi),\xi+1}$ forces that $\dot{W}_{\Qnm_\xi}(\bar{p}(\xi))\in\dot{D}_{\Delta(\xi),\xi+1}$.

   No matter the case, for any $\alpha<\Delta(\xi)$, $\dot{D}_{\Delta(\xi),\xi+1}$ is forced to contain $\dot{D}_{\alpha,\xi+1}$ and
   \[q_{\alpha,\xi+1}\Vdash_{\alpha,\xi+1}\dot{W}_{\Por_{\alpha,\xi+1}}(\bar{p}\frestr(\alpha,\xi+1))=\dot{W}_{\Por_{\alpha,\xi}}(\bar{p}\frestr(\alpha,\xi)),\] so this condition forces that $\dot{W}_{\Por_{\alpha,\xi+1}}(\bar{p}\frestr(\alpha,\xi+1))\in\dot{D}_{\alpha,\xi+1}$.

   Now, by induction on $\alpha\in[\Delta(\xi),\nu]$, we define $\dot{D}_{\alpha,\xi+1}$ as required. We have already dealt with the case $\alpha=\Delta(\xi)$. For the successor step, assume we have defined $\dot{D}_{\alpha,\xi+1}$ accordingly. By Lemma~\ref{simpleufextension}, we can choose a $\Por_{\alpha+1,\xi+1}$-name $\dot{D}_{\alpha+1,\xi+1}$ of an ultrafilter that contains $\dot{D}_{\alpha,\xi+1}\cup\dot{D}_{\alpha+1,\xi}$.
   For the limit step, let $\alpha$ be limit and assume we have already defined $\la\dot{D}_{\alpha_0,\xi+1}:\alpha_0<\alpha\ra$. By Lemma~\ref{simpleufextension}, for any $\alpha_0<\alpha$, $\Por_{\alpha,\xi+1}$ forces that $\dot{D}_{\alpha_0,\xi+1}\cup\dot{D}_{\alpha,\xi}$ has the finite intersection property, hence $\dot{D}_{\alpha,\xi}\cup\bigcup_{\alpha_0<\alpha}\dot{D}_{\alpha_0,\xi+1}$ also has this property, i.e., it can be extended to an ultrafilter. Let $\dot{D}_{\alpha,\xi+1}$ be a $\Por_{\alpha,\xi+1}$-name of such an ultrafilter.

   It remains to show that item (e) holds for $(\alpha,\xi+1)$ when $\Delta(\xi)\leq\alpha\leq\nu$. If $\xi\in R^*$ then $q_{\alpha,\xi+1}$ forces
   $\dot{W}_{\Por_{\alpha,\xi+1}}(\bar{p}\frestr(\alpha,\xi+1))=\dot{W}_{\Por_{\alpha,\xi}}(\bar{p}\frestr(\alpha,\xi))\cap\dot{W}_{\Qnm_\xi}(\bar{p}(\xi))$;
   else, if $\xi\notin R^*$ then $q_{\alpha,\xi+1}$ forces that \[\dot{W}_{\Por_{\alpha,\xi+1}}(\bar{p}\frestr(\alpha,\xi+1))\subseteq\dot{W}_{\Por_{\alpha,\xi}}(\bar{p}\frestr(\alpha,\xi))\text{\ and } |\dot{W}_{\Por_{\alpha,\xi}}(\bar{p}\frestr(\alpha,\xi))\smallsetminus\dot{W}_{\Por_{\alpha,\xi+1}}(\bar{p}\frestr(\alpha,\xi+1))|\leq 1\] (because $\la\dom p_n:n<\omega\ra$ forms a $\Delta$-system and $\xi$ is not in its root). Hence, in any case it is clear that $q_{\alpha,\xi+1}$ forces $\dot{W}_{\Por_{\alpha,\xi+1}}(\bar{p}\frestr(\alpha,\xi+1))\in\dot{D}_{\alpha,\xi+1}$.

   \emph{Limit step.} Let $\eta\leq\pi$ be a limit ordinal and assume we have succeeded in our construction for $\xi<\eta$. For each $\alpha\leq\nu$ put $q_{\alpha,\eta}:=\bigcup_{\xi<\eta}q_{\alpha,\xi}$, which clearly satisfies (c) and (d). By recursion on $\alpha\leq\nu$ we define $\dot{D}_{\alpha,\eta}$ satisfying (a), (b) and (e). %To define $\dot{D}_{0,\eta}$, it is enough to show that, for any $\Por_{0,\eta}$-name $\dot{b}$ of a member of $\bigcup_{\xi<\eta}\dot{D}_{0,\xi}$, $\proj^\nu_0(q\frestr\eta)$ forces that $\dot{b}\cap\{n<\omega:\proj^\nu_0(p_n\frestr\eta)\}\neq\emptyset$. Let $r\leq\proj^\nu_0(q\frestr\eta)$ in $\Por^*_{0,\eta}$. By strengthening $r$, we may assume that there is a $\xi<\eta$ such that $r,\proj^\nu_0(q\frestr\eta)\in\Por^*_{0,\xi}$ and $\dot{b}$ is (forced by $r$ to be equal to) a $\Por_{0,\xi}$-name of a member of $\dot{D}_{0,\xi}$. As $\proj^\nu_0(q\frestr\xi)$ forces that $\dot{b}\cap\{n<\omega:\proj^\nu_0(p_n\frestr\xi)\}\neq\emptyset$, we can find an $n<\omega$ and an $r'\in\Por^*_{0,\xi}$ stronger than both $r$ and $\proj^\nu_0(p_n\frestr\xi)$ such that $r'$ forces $n\in\dot{b}$. Put $r'':=r'\cup \proj^\nu_0(p_n\frestr[\xi,\eta))$, which is a condition in $\Por^*_{0,\eta}$ stronger than both $r'$ and $\proj^\nu_0(p_n\frestr\eta)$, so it forces $n\in\dot{b}\cap\{n<\omega:\proj^\nu_0(p_n\frestr\eta)\}$.
   When $\alpha=0$, by Lemma~\ref{linearufextension} applied to the FS iteration $\Por^+_{0,\eta}=\la\Por^+_{0,\xi},\Qnm_{0,\xi}:\xi<\eta\ra$, $q_{0,\eta}$ forces that $\dot{W}_{\Por_{0,\eta}}(\bar{p}\frestr(0,\eta))$ intersects any member of $\bigcup_{\xi<\eta}\dot{D}_{0,\xi}$, so we can find a $\Por_{0,\eta}$-name of an ultrafilter $\dot{D}_{0,\eta}$ that contains this union and such that $q_{0,\eta}$ forces $\dot{W}_{\Por_{0,\eta}}(\bar{p}\frestr(0,\eta))\in\dot{D}_{0,\eta}$.

   For the successor step, assume we have found $\dot{D}_{\alpha,\eta}$. By Lemma~\ref{limufextension} applied to $(\mbf|\{\alpha,\alpha+1\})\frestr\eta$, $q_{\alpha+1,\eta}$ forces that $\dot{D}_{\alpha,\eta}\cup\bigcup_{\xi<\eta}\dot{D}_{\alpha+1,\xi}\cup\{\dot{W}_{\Por_{\alpha+1,\eta}}(\bar{p}\frestr(\alpha+1,\eta))\}$ has the finite intersection property, so we can find a $\Por_{\alpha+1,\eta}$-name $\dot{D}_{\alpha+1,\eta}$ that satisfies (a), (b) and (e).

   For the limit step, let $\alpha\leq\nu$ limit and assume we have defined $\dot{D}_{\alpha_0,\eta}$ for all $\alpha_0<\alpha$. By Lemma~\ref{limufextension} applied to $(\mbf|\{\alpha_0,\alpha\})\frestr\eta$, $q_{\alpha,\eta}$ forces that $\dot{D}_{\alpha_0,\eta}\cup\bigcup_{\xi<\eta}\dot{D}_{\alpha,\xi}\cup\{\dot{W}_{\Por_{\alpha,\eta}}(\bar{p}\frestr(\alpha,\eta))\}$ has the finite intersection property. Hence, $q_{\alpha,\eta}$ forces that $\bigcup_{\alpha_0<\alpha}\dot{D}_{\alpha_0,\eta}\cup\bigcup_{\xi<\eta}\dot{D}_{\alpha,\xi}\cup\{\dot{W}_{\Por_{\alpha,\eta}}(\bar{p}\frestr(\alpha,\eta))\}$ has the same property, so it can be extended to an ultrafilter $\dot{D}_{\alpha,\eta}$.
\end{proof}

\section{Applications}\label{SecAppl}

In this section, we show the applications of our main result. We start with the following notion of strongly dominating family.

\begin{definition}\label{Defthetadom}
   Let $\Rbf=\la X,Y,\sqsubset\ra$ be a relational system and let $\theta$ be a cardinal number. A subset $D$ of $Y$ is a \emph{strongly $\theta$-$\Rbf$-dominating family} if there is a ${<}\theta$-directed partial order $\la L,\unlhd\ra$ such that $D=\{a_l:l\in L\}$ and, for any $x\in X$, there is some $l_0\in L$ such that $x\sqsubset a_l$ for all $l\unrhd l_0$ in $L$.
\end{definition}

This notion is used implicitly in~\cite{KTT,GKS} as a dual of strongly unbounded families. Since strongly unbounded families give upper bounds of $\bfrak(\Rbf)$ and lower bounds of $\dfrak(\Rbf)$, strongly unbounded families are used to find the converse bounds.

\begin{lemma}
   If $D\subseteq Y$ is a $\theta$-$\Rbf$-dominating family then $\theta\leq\bfrak(\Abf)$ and $\dfrak(\Abf)\leq|D|\leq|L|$.
\end{lemma}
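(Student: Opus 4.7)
The plan is to verify each of the three inequalities directly from the definition of a strongly $\theta$-$\Rbf$-dominating family. The argument is a dual of Lemma \ref{StrUnb-effect} and requires no real obstacle; the main content is just unpacking the directedness of $\langle L,\unlhd\rangle$.

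First I would handle the trivial bound $|D|\leq|L|$, which follows immediately from $D=\{a_l:l\in L\}$ being indexed by $L$ (no injectivity needed). Next, for $\dfrak(\Rbf)\leq|D|$, I would show that $D$ is an $\Rbf$-dominating family in the ordinary sense. Given any $x\in X$, the strong domination hypothesis produces $l_0\in L$ with $x\sqsubset a_l$ for every $l\unrhd l_0$; taking $l=l_0$ (using reflexivity of $\unlhd$) gives $x\sqsubset a_{l_0}\in D$, so $D$ witnesses the bound.

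The substantive inequality is $\theta\leq\bfrak(\Rbf)$, where I would use $<\theta$-directedness to bound arbitrary small families. Let $F\subseteq X$ with $|F|<\theta$, enumerated as $F=\{x_i:i<\kappa\}$ with $\kappa<\theta$. For each $i<\kappa$, apply the strong domination hypothesis to obtain $l_i\in L$ such that $x_i\sqsubset a_l$ whenever $l\unrhd l_i$. Because $\langle L,\unlhd\rangle$ is $<\theta$-directed and $|\{l_i:i<\kappa\}|\leq\kappa<\theta$, there is a common upper bound $l^*\in L$ with $l^*\unrhd l_i$ for all $i<\kappa$. Then $x_i\sqsubset a_{l^*}$ for every $i<\kappa$, so $a_{l^*}\in Y$ $\sqsubset$-dominates every member of $F$. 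Hence every subfamily of $X$ of size $<\theta$ is $\Rbf$-bounded, giving $\theta\leq\bfrak(\Rbf)$.

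The only mildly delicate point is ensuring that the definition of $<\theta$-directedness covers families of size exactly $\kappa$ for every $\kappa<\theta$ (including the case when $\theta$ is a limit cardinal or when $\kappa$ is finite, where the directedness reduces to reflexivity or to ordinary upward directedness). Since this is standard, the proof is essentially three lines per inequality, and I would present it as such.
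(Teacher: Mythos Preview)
Your proof is correct. The paper states this lemma without proof, treating it as an immediate consequence of Definition \ref{Defthetadom}; your argument is exactly the routine verification the authors left to the reader.
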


Note that, whenever $\theta$ is regular, any strongly $\theta$-$\Rbf$-unbounded family is strongly $\theta$-$\Rbf^\perp$-dominating, where $\Rbf^\perp:=\la Y,X,\not\sqsupset\ra$ is the relational system dual to $\Rbf$. Also recall that, for any cardinal $\theta$ of uncountable cofinality, any ${<}\theta$-directed partial order is still ${<}\theta$-directed in any ccc-forcing extension.

In principle, there is no need to use strongly dominating families to prove our consistency results that do not depend on large cardinals (as in the proof of Theorem~\ref{Apl2}). However, considering the techniques from~\cite{KTT,GKS}, such families are required to separate more cardinals in the right side of Cicho\'n's diagram after applying Boolean ultrapowers.

\begin{theorem}\label{Apl1}
Let $\theta_0\leq \theta_1\leq \theta_2\leq \mu\leq \nu$ be uncountable regular cardinals and let $\lambda$ be a cardinal such that $\nu\leq\lambda=\lambda^{{<}\theta_2}$. Then there is a ccc poset that forces $\add(\Ncal)=\theta_0$, $\cov(\Ncal)=\theta_1$, $\bfrak=\afrak=\theta_2$, $\non(\Mcal)=\mu$, $\cov(\Mcal)=\nu$ and $\dfrak=\non(\Ncal)=\cfrak=\lambda$ (see Figure~\ref{Fig7val} on page~\pageref{Fig7val}).
\end{theorem}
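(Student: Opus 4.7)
My plan is to construct a $<\theta_2$-uf-extendable matrix iteration $\mbf$ with $I^\mbf=\lambda+1$ and $\pi^\mbf$ an ordinal of size $\lambda$ and uncountable cofinality $\geq\theta_2$, and take $\Por_{\lambda,\pi^\mbf}$ as the witnessing poset. Theorem \ref{mainpres} then gives automatically that $\Por_{\lambda,\pi^\mbf}$ is $\theta_2$-uf-Knaster (hence $\theta_2$-cc) and preserves every strongly $\theta_2$-$\Dbf$-unbounded family and every $\theta_2$-strong-$\Md$ a.d.\ family from the intermediate models $V_{\alpha,\xi}$. The matrix dimension is used, as in \cite{B1S,M}, to separate $\cov(\Mwf)$ from $\cfrak$, which is beyond what the plain FS iteration of \cite{GMS} can deliver.

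\textbf{Iterands.} At an initial step I would use $\Hor_{\theta_2}$ (with $\Delta(\xi)=0$) which, by Lemma \ref{HechlerMad}(b), adds a $\theta_2$-strong-$\Md$ a.d.\ family of size $\theta_2$. A separate initial segment at a row of height exactly $\theta_2$ is devoted to Cohen forcing, so that by Theorem \ref{mainprescohen} the generic Cohen reals form a strongly $\theta_2$-$\Dbf$-unbounded family of size $\theta_2$. A bookkeeping argument distributes the remaining stages cofinally often into:
\begin{enumerate}[(i)]
\item restricted localization $\Loc\cap N_\xi$ with $|N_\xi|<\theta_0$ and $\Delta(\xi)<\theta_0$;
\item restricted random $\Bor\cap N_\xi$ with $|N_\xi|<\theta_1$ and $\Delta(\xi)<\theta_1$;
\item restricted eventually-different $\Eor\cap N_\xi$ with $|N_\xi|<\mu$ and $\Delta(\xi)<\mu$;
\item Cohen forcing $\Cor$ at rows $\Delta(\xi)<\nu$.
\end{enumerate}
Each such iterand has size $<\theta_2$ and is thus $<\theta_2$-uf-linked by Remark \ref{RemKnaster}(5); Hechler forcing $\Hor_{\theta_2}$ is $\sigma$-centered, in particular $\sigma$-uf-linked. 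Hence $\mbf$ is $<\theta_2$-uf-extendable.

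\textbf{Computing the invariants.} The upper bounds $\add(\Nwf)\leq\theta_0$, $\cov(\Nwf)\leq\theta_1$ and $\non(\Mwf)\leq\mu$ follow by the standard Brendle--Judah--Shelah device (\cite{JS,Br}) applied to the cofinally added restricted iterands in (i)--(iii), while $\cov(\Mwf)\leq\nu$ follows from the Cohen reals of (iv). For the lower bounds, all iterands are good for the appropriate relational systems of Example \ref{ExmPrs} ($\sigma$-centered posets are $\Lc^*$- and $\Cn$-good; restricted random is $\Lc^*$-good by Kamburelis; Cohen is good for every Prs). Combining this with Theorem \ref{sizeforbd} and the matrix preservation of Theorem \ref{matsizebd} at rows of cofinalities $\theta_0$, $\theta_1$, $\mu$ and $\nu$ yields $\add(\Nwf)\geq\theta_0$, $\cov(\Nwf)\geq\theta_1$, $\non(\Mwf)\geq\mu$, $\cov(\Mwf)\geq\nu$, and $\dfrak,\non(\Nwf)\geq\lambda$. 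The continuum equals $\lambda$ because $|\Por_{\lambda,\pi^\mbf}|\leq\lambda$ and $\lambda^{<\theta_2}=\lambda$.

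\textbf{The main obstacle.} The delicate point is $\bfrak=\afrak=\theta_2$. The lower bound $\bfrak\geq\theta_2$ is arranged as in \cite{GMS}: the cofinally many restricted $\Eor$'s below stage $\theta_2$ add reals dominating their models $N_\xi$ (Pawlikowski), so every $<\theta_2$-sized family of reals in the final extension is dominated. The inequalities $\bfrak\leq\theta_2$ and $\afrak\leq\theta_2$ are precisely what Theorem \ref{mainpres} supplies, and this is where the ultrafilter-extendable technology becomes essential: since the restricted $\Eor$'s add dominating reals, the classical $\Dbf$-goodness machinery of \cite{JS,Br} cannot handle them; one really needs that the whole iteration is $\theta_2$-$\Fr$-Knaster. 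With this in hand, Theorem \ref{mainpres} preserves the strongly $\theta_2$-$\Dbf$-unbounded family of Cohen reals from Theorem \ref{mainprescohen}, giving $\bfrak\leq\theta_2$, and Theorem \ref{FrKnastermad} preserves the $\theta_2$-strong-$\Md$ a.d.\ family added by $\Hor_{\theta_2}$, giving $\afrak\leq\theta_2$.
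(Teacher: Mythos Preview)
Your overall architecture is close to the paper's, but there is a genuine gap in the argument for $\bfrak\geq\theta_2$, and a systematic confusion between the two directions for the other invariants.

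\textbf{The main gap: you have no mechanism to increase $\bfrak$.} You write that ``the cofinally many restricted $\Eor$'s \ldots\ add reals dominating their models $N_\xi$ (Pawlikowski)''. This misreads Pawlikowski's result. The generic of $\Eor^{N_\xi}$ is an \emph{eventually different} real over $N_\xi$; it is not dominating over $N_\xi$. What Pawlikowski showed is that $\Eor$ restricted to a submodel, when forced over a larger model, \emph{may} add a real dominating the larger model --- this is precisely the obstacle that forces one to abandon $\Dbf$-goodness and invoke the $\Fr$-Knaster machinery for the \emph{upper} bound $\bfrak\leq\theta_2$. It gives you no control whatsoever for the lower bound. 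The paper (and \cite{GMS}) obtains $\bfrak\geq\theta_2$ by a separate family of iterands: restrictions of Hechler dominating forcing $\Dor^{N_\xi}$ with $|N_\xi|<\theta_2$, organised by bookkeeping into a $<\theta_2$-directed system so that the generics form a strongly $\theta_2$-$\Dbf$-dominating family. Your list (i)--(iv) simply omits this iterand; without it the proof does not go through.

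\textbf{Swapped directions.} You attribute the inequalities $\add(\Ncal)\leq\theta_0$, $\cov(\Ncal)\leq\theta_1$, $\non(\Mcal)\leq\mu$ to the restricted iterands and the reverse inequalities to goodness and Theorem~\ref{sizeforbd}. It is the other way round: the restricted $\Loc$, $\Bor$, $\Dor$ generics push $\add(\Ncal)$, $\cov(\Ncal)$, $\bfrak$ \emph{up} (they yield strongly dominating families for $\Lc^*$, $\Cn$, $\Dbf$), while goodness together with Theorem~\ref{sizeforbd} gives the \emph{upper} bounds $\bfrak(\Rbf)\leq\theta$. Relatedly, your sentence ``each such iterand has size $<\theta_2$'' is false for (iii), where $|N_\xi|<\mu$; the correct reason these iterands are $<\theta_2$-uf-linked is Lemma~\ref{EDuflim} ($\Eor$ is $\sigma$-uf-linked irrespective of size).

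\textbf{A structural remark.} The paper takes vertical index set $\nu+1$ rather than $\lambda+1$; the $\nu$-many initial Cohen reals (one per row) then give, via Theorem~\ref{matsizebd}, the strongly $\nu$-$\Ed$-unbounded family witnessing $\cov(\Mcal)\geq\nu$, while the $\Eor$-iterands are placed at rows $t(\rho)+1<\nu$ and, together with the ordering $\unlhd'$, yield a strongly $\mu$-$\Ed$-dominating family of size $\nu$ witnessing $\non(\Mcal)\geq\mu$ and $\cov(\Mcal)\leq\nu$. Your choice $I^\mbf=\lambda+1$ with (iv) is not obviously wrong, but you have not explained how rows above $\nu$ are used, nor how $\cov(\Mcal)\leq\nu$ and $\non(\Mcal)\geq\mu$ are obtained.
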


\begin{proof} Denote $\Sor_0=\Loc$, $\Sor_{1}=\Bor$ (defined in Section~\ref{SecPre}) and $\Sor_2=\Dor$, the Hechler poset for adding a dominating real. Fix a bijection $g=(g_0,g_1,g_2):\lambda\to3\times\lambda\times\lambda$ and a function $t:\nu\mu\to\nu$ such that $t(\nu\delta+\alpha)=\alpha$ for each $\delta<\mu$ and $\alpha<\nu$. For each $\rho<\nu\mu$ denote $\eta_\rho:=\nu+\lambda\rho$, and put $R_i:=\{\eta_\rho+1+\varepsilon:\varepsilon<\lambda,\ \rho<\nu\mu,\ g_0(\varepsilon)=i\}$ for each $i<3$. Set $R:=R_0\cup R_1\cup R_2$.

The poset we want is $\Hor_{\theta_2}\ast\Cor_{\lambda}\ast\Por$ where $\Por$ is constructed in $V_{0,0}:=V^{\Hor_{\theta_2}\ast\Cor_\lambda}$ from a ${<}\theta_2$-uf-extendable matrix iteration $\mbf$, with $I^\mbf=\nu+1$ and $\pi^\mbf=\nu+\lambda\nu\mu$, such that

\begin{enumerate}[(I)]
    \item for any $\alpha<\nu$, $\Delta^\mbf(\alpha)=\alpha+1$ and $\Qnm^\mbf_{\Delta(\alpha),\alpha}=\omega^{<\omega}$,
\end{enumerate}

and the matrix iteration at each interval of the form $[\eta_\rho,\eta_{\rho+1})$ for $\rho<\nu\mu$ is defined as follows. Assume that $\mbf{\upharpoonright}\eta_\rho$ has been constructed and that, for any $i<3$ and $\xi\in R_i\cap\eta_\rho$, a $\Por_{\Delta(\xi),\xi}$-name $\dot{N}_\xi$ of a transitive model of $\thzfc$ of size ${<}\theta_i$ has already been defined.

Choose
   \begin{enumerate}
       \item[(0)] for $i\in\{0,2\}$, an enumeration $\{\dot{x}_{i,\zeta}^\rho:\zeta<\lambda\}$ of all the nice $\Por_{\nu,\eta_\rho}$-names for all the members of $\omega^\omega$; for $i=1$, $\{\dot{x}_{i,\zeta}^\rho:\zeta<\lambda\}$ enumerates all the (nice) $\Por_{\nu,\eta_\rho}$-names for all the members of $\Omega$ (from $\Cn$, see Example~\ref{ExmPrs}(3));
       \item[(1)] for $i<3$, an enumeration $[\eta_\rho\cap R_i]^{{<}\theta_i}=\{A_{i,\zeta}^\rho:\zeta<\lambda\}$.
   \end{enumerate}

For $\xi\in[\eta_\rho,\eta_{\rho+1})$,
\begin{enumerate}
    \item[(II)] if $\xi=\eta_\rho$, put $\Delta^\mbf(\xi)=t(\rho)+1$ and $\Qnm_{\xi}^{\mbf}=\Eor^{V_{\Delta(\xi),\xi}}$;
     \item[(III)] if  $\xi=\eta_\rho+1+\varepsilon$ for some $\varepsilon<\lambda$, then there is some $\alpha<\nu$ such that $\dot{x}_{g_0(\varepsilon),g_1(\varepsilon)}^\rho$ is a $\Por_{\alpha,\eta_\rho}$-name, so we can choose
     \begin{enumerate}[({III-}1)]
         \item a successor ordinal $\Delta^\mbf(\xi)$ such that $\sup_{\gamma\in A_{g_0(\varepsilon),g_2(\varepsilon)}}\Delta(\gamma)<\Delta(\xi)$ and $\alpha<\Delta(\xi)<\nu$, and
         \item a $\Por_{\Delta(\xi),\xi}$-name $\dot{N}_\xi$ of a transitive model of $\thzfc$ of size ${<}\theta_{g_0(\varepsilon)}$ such that $\Por_{\Delta(\xi),\xi}$ forces that $\bigcup_{\gamma\in A_{g_0(\varepsilon),g_2(\varepsilon)}^\rho}\dot{N}_\gamma\subseteq \dot{N}_\xi$ and $\dot{x}_{g_0(\varepsilon),g_1(\varepsilon)}^\rho\in \dot{N}_\xi$.
     \end{enumerate}
     Put $\Qnm_{\xi}^{\mbf}=\Sor_{g_0(\varepsilon)}^{\dot{N}_\xi}$.
\end{enumerate}
According to Definition~\ref{Defufextmatrix}, the above settles the construction of $\mbf$ as a ${<}\theta_2$-uf-extendable matrix iteration. Set $\Por:=\Por_{\nu,\pi}$, which is ccc.

We need to show that $\Por$ forces the statement of the theorem. Since this poset has size $\lambda$, it forces $\cfrak\leq\lambda$. On the other hand, by Theorem~\ref{mainpres}, $\Por$ is a $\theta_2$-uf-Knaster poset, so it preserves the mad family previously added by $\Hor_{\theta_2}$ and forces $\afrak\leq\theta_2$. Even more, for any regular cardinal $\kappa\in [\theta_2,\lambda]$, $\Por$ preserves the strongly $\kappa$-$\D$-unbounded family of size $\kappa$ previously added by $\Cor_\kappa$. In particular, $\Por$ forces $\bfrak\leq \theta_2$ and $\lambda\leq \dfrak$.

Observe that $\Por$ can be obtained by the FS iteration $\la\Por_{\nu,\xi},\Qnm_{\nu,\xi}:\xi<\pi\ra$ and that all its iterands are $\theta_0$-$\Lc^*$-good and $\theta_1$-$\Cn$-good. Therefore, by Theorem~\ref{sizeforbd}, $\Por$ forces  $\add(\Ncal)\leq\theta_0$, $\cov(\Ncal)\leq\theta_1$ and $\lambda\leq\non(\Ncal)$, in fact, $\Por$ adds
\begin{enumerate}[({SU}1)]
    \item a strongly $\kappa$-$\Lc^*$-unbounded family of size $\kappa$ for each regular $\kappa\in[\theta_0,\lambda]$, and
    \item a strongly $\kappa$-$\Cn$-unbounded family of size $\kappa$ for each regular $\kappa\in[\theta_1,\lambda]$.
\end{enumerate}
On the other hand, $\Por$ adds $\mu$-cofinally many Cohen reals that form a strongly $\mu$-$\Ed$-unbounded family of size $\mu$, hence $\Por$ forces $\non(\Mcal)=\bfrak(\Ed)\leq\mu$.

To see that $\Por$ forces $\theta_0\leq\add(\Ncal)$, $\theta_1\leq\cov(\Ncal)$ and $\theta_2\leq\bfrak$, we show that $\Por$ adds the corresponding strongly dominating families. In the ground model,
order $R$ by $\eta\unlhd\eta'$ iff $\eta\leq\eta'$, $\Delta(\eta)\leq\Delta(\eta')$ and $\Vdash_{\Hor_{\theta_2}\ast\Cor_\lambda\ast\Por_{\Delta(\eta'),\eta'}}\dot{N}_\eta\subseteq \dot{N}_{\eta'}$, which is a partial order, even more, $\la R_i,\unlhd\ra$ is ${<}\theta_i$-directed for any $i<3$. To see this, if $A\subseteq R_i$ has size ${<}\theta_i$ then we can find some $\rho<\nu\mu$ such that $A\subseteq\eta_\rho$, so choose some $\zeta<\lambda$ such that $A:=A_{i,\zeta}^{\rho}$. Put $\xi:=\eta_\rho+1+\varepsilon$ where $\varepsilon=g^{-1}(i,\gamma,\zeta)$ for some $\gamma$ chosen arbitrarily. Note that $\xi$ is an upper bound of the set $A$ with respect to $\unlhd$.

In $V_{0,0}$, for $\xi\in R_0$ let $\dot{\varphi}_\xi$ be the $\Por_{\Delta(\xi),\xi+1}$-name of the $\Lc^*$-dominating slalom over $\dot{N}_\xi$ added by $\Qnm^\mbf_\xi=\Loc^{N_\xi}$; for $\xi\in R_1$ let $\dot{r}_\xi$ be the $\Por_{\Delta(\xi),\xi+1}$-name of the random real over $\dot{N}_\xi$ added by $\Qnm^\mbf_\xi=\Bor^{N_\xi}$; and for $\xi\in R_2$, let $\dot{d}_\xi$ be the $\Por_{\Delta(\xi),\xi+1}$-name of the dominating real over $\dot{N}_\xi$ added by $\Qnm^\mbf_\xi=\Dor^{N_\xi}$. Define $\dot{S}:=\{\dot{\varphi}_\xi:\xi\in R_0\}$, $\dot{C}:=\{\dot{r}_\xi:\xi\in R_1\}$, and $\dot{D}:=\{\dot{d}_\xi:\xi\in R_2\}$.

We claim that $\Por$ forces that $\dot{S}$ is a strongly $\theta_0$-$\Lc^*$-dominating family, $\dot{C}$ is a strongly $\theta_1$-$\Cn$-dominating family, and $\dot{D}$ is a strongly $\theta_2$-$\D$-dominating family. We just show this fact for $\dot{S}$ (the others can be proved similarly). Let $\dot{x}$ be a $\Por$-name for a real in $\omega^\omega$. We can find a $\rho<\nu\mu$ such that $\dot{x}$ is a $\Por_{\nu,\eta_\rho}$-name, so there is some $\zeta<\nu$ such that $\dot{x}=\dot{x}_{0,\zeta}^\rho$. Put $\xi=\eta_\rho+1+\varepsilon$ where $\varepsilon:=g^{-1}(0,\zeta,0)$ , so $\Por_{\Delta(\xi),\xi}$ forces that $\dot{x}\in \dot{N}_\xi$. Fix any $\beta\unrhd\xi$ in $R_0$. Then $\xi\leq\beta$, $\Delta(\xi)\leq\Delta(\beta)$ and $\Vdash_{\Por_{\Delta(\beta),\beta}}\dot{N}_\xi\subseteq \dot{N}_\beta$, so $\Vdash_{\Por_{\Delta(\beta),\beta}}\dot{x}\in\dot{N}_\beta$. Therefore, $\dot{\varphi}_\beta$ is forced to localize $\dot{x}$.% In similar away, it can be proved that $\dot{E}$ is a strongly $\theta_1$-$\Cn$-dominating family, and $\dot{D}$ is a strongly $\theta_2$-$\D$-dominating family.

For each $\rho<\nu\mu$ denote by $\dot{e}_\rho$ the $\Por_{\Delta(\eta_\rho),\eta_\rho+1}$-name of the eventually different real over $V_{t(\rho)+1,\eta_\rho}$ added by $\Qnm_{{t(\rho)+1,\eta_\rho}}$. To show that  $\non(\Mcal)\geq\mu$ and $\cov(\Mcal)\leq\nu$, it is enough to prove that $\Por$ forces that $\dot{E}:=\{\dot{e}_\rho:\rho<\nu\mu\}$ is a strongly $\mu$-$\Ed$-dominating family. Consider the partial order on $\nu\mu$ defined by $\rho\unlhd'\varrho$ iff $\rho\leq\varrho$ and $t(\rho)\leq t(\varrho)$, which is actually ${<}\mu$-directed. To see this, let $A\subseteq\nu\mu$ of size of ${<}\mu$. Since $A$ is bounded with respect to $\leq$ (because $\cf(\nu\mu)=\mu$), in has an upper bound $\rho\in \nu\mu$. Define $\alpha:=\sup_{\eta\in A}\{t(\eta)+1\}$, which is ${<}\nu$ because $\nu$ is a regular cardinal. By the definition of $t$, there is some $\delta\in[\rho,\nu\mu)$ such that $\alpha=t(\delta)$, hence $\delta$ is an upper bound of $A$ with respect to $\unlhd'$.

Let $x\in V_{\nu,\pi}\cap\omega^\omega$. We can find $\alpha<\nu$ and $\rho<\nu\mu$ such that $x\in V_{\alpha,\eta_\rho}$. By the definition of $t$, there is some $\delta\in[\rho,\nu\mu)$ such that $t(\delta)=\alpha$, so $x\in V_{t(\delta),\eta_\delta}$. For any $\varrho \unrhd'\delta$, $\delta\leq\varrho$ and $t(\delta)\leq t(\varrho)$, so $x\in V_{t(\varrho)+1,\eta_\varrho}$, which implies $x\neq^*e_{\varrho}$.

To finish the proof we conclude that, by Theorem~\ref{matsizebd}, $\Por$ forces $\cov(\Mcal)=\dfrak(\Ed)\geq \nu$. In fact, if $c_\alpha$ denotes the Cohen real added by $\Qor_{\alpha+1,\alpha}$ for any $\alpha<\nu$, it is clearly $\Ed$-unbounded over $V_{\alpha,\alpha}=V_{\alpha,\alpha+1}$, so $\{c_\alpha:\alpha<\nu\}$ is a strongly $\nu$-$\Ed$-unbounded family.
\end{proof}

\begin{theorem} \label{Apl2}
Let $\theta_0\leq \theta_1\leq \mu\leq \nu$ be uncountable regular cardinals and let $\lambda$ be a cardinal such that $\nu\leq\lambda=\lambda^{{<}\theta_1}$. Then there is a ccc poset that forces $\mathrm{MA}_{{<}\theta_0}$, $\add(\Ncal)=\theta_0$, $\bfrak=\afrak=\theta_1$, $\cov(\Ncal)=\non(\Mcal)=\mu$, $\cov(\Mcal)=\non(\Ncal)=\nu$ and $\dfrak=\cof(\Mcal)=\cfrak=\lambda$ (see Figure~\ref{FigM} on page~\pageref{FigM}).
\end{theorem}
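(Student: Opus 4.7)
The plan is to closely adapt the construction of Theorem \ref{Apl1}, taking the poset $\Hor_{\theta_1}\ast\Cor_\lambda\ast\Por$ where $\Por$ is constructed in $V^{\Hor_{\theta_1}\ast\Cor_\lambda}$ as a $<\theta_1$-uf-extendable matrix iteration $\mbf$. The first two factors add, respectively, a $\theta_1$-strong-$\Md$ a.d.\ family of size $\theta_1$ (Lemma \ref{HechlerMad}(b)) and a strongly $\kappa$-$\Dbf$-unbounded family of size $\kappa$ for each regular $\kappa\in[\theta_1,\lambda]$; both are to be preserved by $\Por$ to yield $\afrak,\bfrak\leq\theta_1$.

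Set $I^\mbf=\nu+1$ and $\pi^\mbf$ of order type $\nu+\lambda\cdot\nu\mu$, using a bookkeeping bijection $g:\lambda\to 3\times\lambda\times\lambda$ exactly as in Apl1. Place $\Sor_0:=\Loc$, $\Sor_1:=\Dor$, $\Sor_2:=\Bor$, with size thresholds $\vartheta_0:=\theta_0$, $\vartheta_1:=\theta_1$, $\vartheta_2:=\mu$, and use $\Sor_i^{\dot N_\xi}$ at the $R_i$-stages of $[\eta_\rho+1,\eta_{\rho+1})$ with $|\dot N_\xi|<\vartheta_i$. Put $\Eor^{V_{t(\rho)+1,\eta_\rho}}$ at each $\eta_\rho$, as in Apl1. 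On the column stages $\alpha<\nu$ take $\Qnm_\alpha$ to be the product $\Cor\times\Bor$, adding simultaneously a Cohen real $\dot c_\alpha$ and a random real $\dot r_\alpha$ over $V_{\alpha,\alpha}$. To secure $\mathrm{MA}_{<\theta_0}$, interleave at cofinally many stages all $\theta_0$-cc posets of size $<\theta_0$; the requisite name-enumerations fit since $\lambda=\lambda^{<\theta_1}\geq\lambda^{<\theta_0}$.

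The critical — and non-obvious — observation is that every iterand of $\mbf$ is $<\theta_1$-uf-linked even though $\vartheta_2=\mu$ may far exceed $\theta_1$: $\Loc^{\dot N_\xi}$, $\Dor^{\dot N_\xi}$ and the MA-posets are $<\theta_1$-uf-linked by their cardinality, while $\Bor^{\dot N_\xi}$, $\Eor$, $\Cor$ and the column product $\Cor\times\Bor$ are $\sigma$-uf-linked (using Lemma \ref{EDuflim}, the $\sigma$-uf-linkedness of random forcing, and Theorem \ref{prodDlink}). Hence $\mbf$ is $<\theta_1$-uf-extendable and by Theorem \ref{mainpres}, $\Por$ is $\theta_1$-uf-Knaster, giving the required preservation of both the mad family and the $\theta_1$-$\Dbf$-unbounded family. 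The rest of the cardinal values are read off as in Apl1: the dominating families constructed at the $R_i$-stages are strongly $\theta_0$-$\Lc^*$-, $\theta_1$-$\Dbf$-, and $\mu$-$\Cn$-dominating, yielding $\theta_0\leq\add(\Ncal)$, $\theta_1\leq\bfrak$, $\mu\leq\cov(\Ncal)$; the $\Eor$-reals yield $\non(\Mcal)\geq\mu$; the $\nu$ Cohens $\dot c_\alpha$ form a strongly $\nu$-$\Ed$-unbounded family via Theorem \ref{matsizebd} (so $\cov(\Mcal)\geq\nu$), and a $\mu$-cofinal sub-selection is strongly $\mu$-$\Ed$-unbounded (so $\non(\Mcal)\leq\mu$). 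Upper bounds $\add(\Ncal)\leq\theta_0$ and $\cov(\Ncal)\leq\mu$ follow from $\theta_0$-$\Lc^*$- and $\mu$-$\Cn$-goodness of every iterand via Theorem \ref{sizeforbd}, using that $\Bor^{\dot N_\xi}$ is $\Lc^*$-good (Kamburelis), the small forcings are good by size, and the remaining ones are $\sigma$-centered. The ZFC inequalities $\cov(\Ncal)\leq\non(\Mcal)$ and $\cov(\Mcal)\leq\non(\Ncal)$ then collapse the pairs; $\dfrak=\cof(\Mcal)=\cfrak=\lambda$ follow from the length and size of the iteration.

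The main obstacle will be forcing $\non(\Ncal)\leq\nu$. For this one must show that the column random reals $\dot r_\alpha$ remain random over $V_{\alpha,\pi}$ — not merely over $V_{\alpha,\alpha}$ — so that by Lemma \ref{realint} any null set in $V_{\nu,\pi}$ lives in $V_{\alpha_0,\pi}$ for some $\alpha_0<\nu$ and is therefore avoided by $\dot r_\alpha$ for all $\alpha\geq\alpha_0$, making $\{\dot r_\alpha:\alpha<\nu\}$ a non-null family of size $\nu$. This lateral preservation is the genuinely new technical ingredient compared with Apl1; it is the natural random-side analog of the Cohen-real preservation in Theorem \ref{matsizebd} and relies precisely on the $\sigma$-uf-linkedness of $\Bor$ together with the $<\theta_1$-uf-extendability of the whole matrix, so that the ultrafilter-limit arguments running the Main Lemma go through for random reals as well. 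Once this piece is in place, everything else is a routine but careful interleaving of the bookkeeping responsibilities, made possible by $\lambda=\lambda^{<\theta_1}$.
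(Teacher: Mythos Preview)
There is a genuine gap, and it is structural rather than technical. Your column random reals $\dot r_\alpha$ cannot do what you ask of them. You claim they remain random over $V_{\alpha,\pi}$ in order to obtain $\non(\Ncal)\leq\nu$; but if that lateral preservation holds, then for any family of $<\nu$ null sets in $V_{\nu,\pi}$, each code lies in some $V_{\alpha_i,\pi}$ with $\sup_i\alpha_i<\nu$ (Lemma \ref{realint}), and any $\dot r_\alpha$ with $\alpha$ above this supremum avoids them all --- forcing $\cov(\Ncal)\geq\nu$. This contradicts $\cov(\Ncal)\leq\non(\Mcal)\leq\mu$ whenever $\mu<\nu$. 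So either the preservation fails (and your $\non(\Ncal)\leq\nu$ argument collapses) or it succeeds (and your model is wrong). Incidentally, the Main Lemma and uf-linkedness have nothing to do with lateral preservation of random reals; that is a separate absoluteness phenomenon, not an ultrafilter-limit one. A second, independent problem: your $R_2$-randoms over models of size $<\mu$ require enumerating $[\eta_\rho\cap R_2]^{<\mu}$ in type $\lambda$, hence $\lambda^{<\mu}=\lambda$, which is not among the hypotheses (only $\lambda^{<\theta_1}=\lambda$ is).

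The paper sidesteps both issues by a single change: it replaces $\Eor$ at the $\eta_\rho$-stages by $\Bor^{V_{t(\rho)+1,\eta_\rho}}$ and drops column randoms and $R_2$-randoms entirely. These $\nu\mu$ random reals, indexed by the $<\mu$-directed order $\unlhd'$ on $\nu\mu$ exactly as in the $\dot e_\rho$-argument of Theorem \ref{Apl1}, form a strongly $\mu$-$\Cn$-dominating family (so $\cov(\Ncal)\geq\mu$, with no need for $\lambda^{<\mu}=\lambda$); since $|\nu\mu|=\nu$ and the family is $\Cn$-dominating, it is a non-null set of size $\nu$ (so $\non(\Ncal)\leq\nu$); and since $\cf(\nu\mu)=\mu$, a family of $<\nu$ null sets can have its stage-indices cofinal in $\nu\mu$, so there is no overshoot to $\cov(\Ncal)\geq\nu$. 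The inequality $\non(\Mcal)\geq\mu$ then comes for free from $\cov(\Ncal)\geq\mu$, so $\Eor$ is unnecessary. The paper also replaces your $\Loc$- and $\Dor$-over-small-models by arbitrary small ccc posets (for $\mathrm{MA}_{<\theta_0}$, hence $\add(\Ncal)\geq\theta_0$) and small $\sigma$-centered subposets of Hechler forcing (for $\bfrak\geq\theta_1$), but that difference is cosmetic.
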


\begin{proof}
Fix a bijection $g=(g_0,g_1,g_2):\lambda\to2\times\nu\times\lambda$ and a function $t:\nu\mu\to\nu$ such that $t(\nu\delta+\alpha)=\alpha$ for each $\delta<\mu$ and $\alpha<\nu$. Denote $\eta_\rho:=\nu+\lambda\rho$ for each $\rho<\nu\mu$.

The desired poset is $\Hor_{\theta_1}\ast\Cor_\lambda\ast\Por$ where $\Por$ is constructed in $V_{0,0}=V^{\Hor_{\theta_1}\ast\Cor_\lambda}$ from a ${<}\theta_1$-uf-extendable matrix iteration $\mbf$.

Work in $V_{0,0}$. Put $I^\mbf:=\nu+1$, $\pi^\mbf:=\nu+\lambda\nu\mu$,

\begin{enumerate}[(I)]
    \item for any $\alpha<\nu$, $\Delta^\mbf(\alpha)=\alpha+1$ and $\Qnm^\mbf_{\Delta(\alpha),\alpha}=\omega^{<\omega}$,
\end{enumerate}

and define the matrix iteration in the intervals of the form $[\eta_\rho,\eta_{\rho+1})$ as follows. Assume that $\mbf{\upharpoonright}\eta_\rho$ has been defined. For $\alpha<\nu$ choose
\begin{itemize}
    \item[(0)] an enumeration $\{\Qnm_{0,\alpha,\zeta}^\rho:\zeta<\lambda\}$ of all the nice $\Por_{\alpha,\eta_\rho}$-names for all the posets which underlining set is a subset of $\theta_0$ of size ${<}\theta_0$ and $\Vdash_{\Por_{\nu,\lambda\rho}}$``$\Qnm_{0,\zeta}^\rho$ is ccc"; and
    \item[(1)] an enumeration $\{\Qnm_{1,\alpha,\zeta}^\rho:\zeta<\lambda\}$ of all the nice $\Por_{\alpha,\eta_\rho}$-names for all the $\sigma$-centered subposets of Hechler forcing of size ${<}\theta_1$.
\end{itemize}
For $\xi\in[\eta_\rho,\eta_{\rho+1})$,
\begin{enumerate}[(I)]
\setcounter{enumi}{1}
    \item if $\xi=\eta_\rho$ put $\Delta(\xi)=t(\rho)+1$ and $\Qnm_{\lambda\rho}^{\mbf}=\Bor^{V_{\Delta(\xi),\xi}}$;
    \item if  $\xi=\eta_\rho+1+\varepsilon$ for some $\rho<\nu\mu$ and $\varepsilon<\lambda$, put $\Delta(\xi)=g_1(\varepsilon)+1$ and $\Qnm_{\xi}^{\mbf}=\Qnm^{\rho}_{g(\varepsilon)}$.
\end{enumerate}
This settles the construction, which is clearly a ${<}\theta_1$-uf-extendable matrix iteration.
\end{proof}

\begin{remark}
   It is possible to additionally force $\mathrm{MA}_{{<}\theta_0}$ in Theorem~\ref{Apl1} by slightly modifying the construction of the matrix iteration. On the other hand, the matrix of Theorem~\ref{Apl2} could be modified to force the existence of a strongly-$\theta_0$-$\Lc^*$-dominating family and a strongly $\theta_1$-$\D$-dominating family.
\end{remark}

As a consequence of Theorem~\ref{Apl2}, we have a model where the cardinal invariants associated with any Yorioka ideal are pairwise different.

\begin{corollary}\label{Apl3}
Let $\theta\leq\mu\leq\nu$ be uncountable regular cardinals and let $\lambda\geq\nu$ be a cardinal such that $\lambda^{{<}\theta}=\lambda$. Then, as in Figure~\ref{FigI_f}, there is a ccc poset that forces $\add(\Ical_f)=\theta$, $\cov(\Ical_f)=\mu$, $\non(\Ical_f)=\nu$, and $\cof(\Ical_f)=\lambda$ for all increasing $f\in\omega^\omega$ (in the extension).
\end{corollary}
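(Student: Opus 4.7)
The plan is to derive this corollary directly from Theorem \ref{Apl2} by choosing the parameters so that each of the four Yorioka invariants gets squeezed between two equal Cicho\'n invariants, using the bounds in Theorem \ref{YorioCichdiagram}. The key observation is that the corollary has only three free regular parameters $\theta\leq\mu\leq\nu$ plus the cardinal $\lambda$, so we should apply Theorem \ref{Apl2} with $\theta_0=\theta_1:=\theta$, keeping $\mu$, $\nu$, $\lambda$ the same. The hypotheses of Apl2 then become $\theta\leq\mu\leq\nu\leq\lambda$ regular uncountable with $\lambda^{<\theta}=\lambda$, which is exactly what we are assuming.

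First I would invoke Theorem \ref{Apl2} with those parameters to obtain a ccc poset $\Por$ forcing
\[
\add(\Ncal)=\bfrak=\theta,\quad \cov(\Ncal)=\non(\Mcal)=\mu,\quad \cov(\Mcal)=\non(\Ncal)=\nu,\quad \dfrak=\cof(\Mcal)=\cfrak=\lambda.
\]
From the last equalities and Cicho\'n's diagram (namely $\cof(\Mcal)\leq\cof(\Ncal)\leq\cfrak$) we also get $\cof(\Ncal)=\lambda$ in the extension.

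Next, working in the $\Por$-extension, fix any increasing $f\in\omega^\omega$. Combining items (b) and (c) of Theorem \ref{YorioCichdiagram} with the first pair of equalities above yields
\[
\theta=\add(\Ncal)\leq\add(\Ical_f)\leq\bfrak=\theta,
\]
so $\add(\Ical_f)=\theta$. Items (a) and (d) together with the second and third pairs of equalities give
\[
\mu=\cov(\Ncal)\leq\cov(\Ical_f)\leq\non(\Mcal)=\mu \quad\text{and}\quad \nu=\cov(\Mcal)\leq\non(\Ical_f)\leq\non(\Ncal)=\nu,
\]
hence $\cov(\Ical_f)=\mu$ and $\non(\Ical_f)=\nu$. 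Finally, (b) and (c) together with $\dfrak=\cof(\Ncal)=\lambda$ yield
\[
\lambda=\dfrak\leq\cof(\Ical_f)\leq\cof(\Ncal)=\lambda,
\]
so $\cof(\Ical_f)=\lambda$.

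Since the four Yorioka invariants are squeezed by classical invariants that do not depend on $f$, the resulting values hold uniformly for every increasing $f\in\omega^\omega$ in the extension, completing the proof. There is no real obstacle here: all the work has already been done in producing the matrix-iteration model of Theorem \ref{Apl2}, and the role of Theorem \ref{uf-extKnaster} is precisely to allow the simultaneous control of $\bfrak$ and $\add(\Ncal)$ on the small side (both forced to equal $\theta$), which is exactly the equality needed to pin down $\add(\Ical_f)$ for \emph{all} $f$ at once, thereby solving \cite[Question 6.1]{CM}.
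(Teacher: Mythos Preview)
Your proof is correct and follows essentially the same approach as the paper: apply Theorem \ref{Apl2} with $\theta_0=\theta_1=\theta$ and then squeeze each Yorioka invariant using Theorem \ref{YorioCichdiagram}. The paper's proof is just a terse two-line version of yours (and in fact contains a typo, writing $\theta_1=\theta_2$ when it means $\theta_0=\theta_1$); your explicit unpacking of the four squeezes---including the observation that $\cof(\Ncal)=\lambda$ follows from $\cof(\Mcal)=\cfrak=\lambda$---is exactly what is intended.
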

\begin{proof}
By application of Theorem~\ref{Apl2} to $\theta:=\theta_0=\theta_1$, there is a ccc poset that forces $\add(\Ncal)=\bfrak=\theta$, $\cov(\Ncal)=\non(\Mcal)=\mu$, $\cov(\Mcal)=\non(\Ncal)=\nu$ and $\dfrak=\cof(\Mcal)=\cfrak=\lambda$. This poset is as required by Theorem~\ref{YorioCichdiagram}.
\end{proof}

\begin{figure}
\begin{center}
  \includegraphics[scale=0.9]{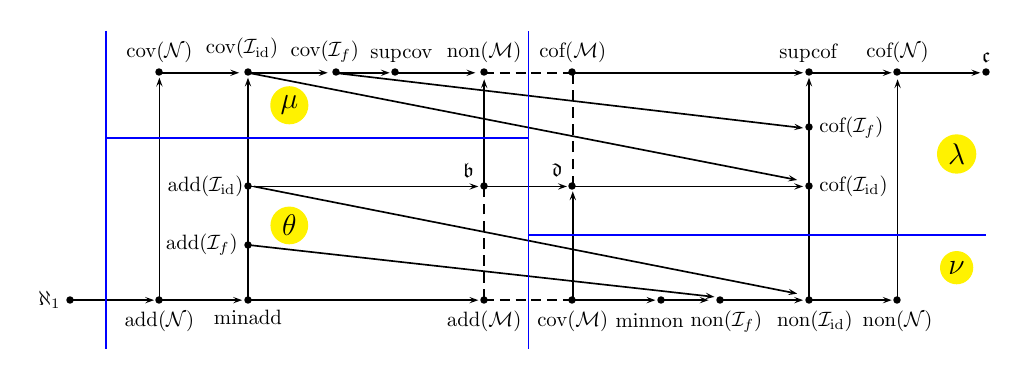}
  \caption{Separation of the cardinals associated with $\Iwf_f$ for any $f$.}
  \label{FigI_f}
\end{center}
\end{figure}

We finally show that Cicho\'n's diagram can consistently be separated into 10 values, assuming the consistency of three strongly compact cardinals. Though in~\cite{GKS} the same result is proved modulo four strongly compact cardinals and GCH, we avoid using GCH by tracking the exact necessary hypothesis about the cardinals.

\begin{theorem}\label{cichonmax3}
Assume:
\begin{enumerate}[(I)]
    \item $\kappa_9<\lambda_1<\kappa_8<\lambda_2<\kappa_7<\lambda_3\leq \lambda_4\leq \lambda_5\leq \lambda_6\leq \lambda_7\leq \lambda_8\leq \lambda_9$ are cardinal numbers,
    \item for $i\in [1,9]\menos\{6\}$, $\lambda_i$ is regular,
    \item $\lambda_6^{<\lambda_3}=\lambda_6$, and
    \item for $j\in\{7,8,9\}$, $\kappa_j$ is strongly compact and $\lambda_j^{\kappa_j}=\lambda_j$.
\end{enumerate}
Then there is a ccc poset that forces $\add(\Ncal)=\lambda_1$, $\cov(\Ncal)=\lambda_2$, $\bfrak=\lambda_3$, $\non(\Mcal)=\lambda_4$, $\cov(\Mcal)=\lambda_5$, $\dfrak=\lambda_6$, $\non(\Ncal)=\lambda_7$, $\cof(\Ncal)=\lambda_8$ and $\cfrak=\lambda_9$ (see Figure~\ref{FigCicmax}).
\end{theorem}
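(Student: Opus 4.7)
\medskip

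The plan is to start from the poset $\Por$ constructed in Theorem \ref{Apl1} and then apply three Boolean ultrapowers, one for each strongly compact $\kappa_j$ ($j=7,8,9$), following the template of \cite{KTT,GKS}. Concretely, first invoke Theorem \ref{Apl1} with $\theta_0:=\lambda_1$, $\theta_1:=\lambda_2$, $\theta_2:=\lambda_3$, $\mu:=\lambda_4$, $\nu:=\lambda_5$, and $\lambda:=\lambda_6$ (the hypothesis $\lambda_6^{<\lambda_3}=\lambda_6$ of (III) supplies $\lambda^{<\theta_2}=\lambda$). This produces a ccc poset $\Por^0$ that forces the six left-hand values $\add(\Nwf)=\lambda_1$, $\cov(\Nwf)=\lambda_2$, $\bfrak=\lambda_3$, $\non(\Mwf)=\lambda_4$, $\cov(\Mwf)=\lambda_5$, and collapses $\dfrak=\non(\Nwf)=\cof(\Nwf)=\cfrak=\lambda_6$. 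Crucially, inspecting the proof of Theorem \ref{Apl1} one sees that $\Por^0$ adds the explicit strongly $\theta_i$-dominating families (in the partial orders $\la R_i,\unlhd\ra$) built from the blocks $R_0$ (localization), $R_1$ (random), $R_2$ (dominating), each of which is $<\theta_i$-directed and hence will remain $<\theta_i$-directed in any further ccc extension.

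Next, iterate three Boolean ultrapowers $j_7,j_8,j_9$, one for each strongly compact cardinal, using complete ultrafilters on $[\lambda_j]^{<\kappa_j}$ as in \cite[Sect.~1--2]{KTT}. If $\mathds{B}^0$ is the completion of $\Por^0$, set inductively $\mathds{B}^j:=j_{j}(\mathds{B}^{j-1})$ for $j=7,8,9$; each $\mathds{B}^j$ remains ccc (strong compactness of $\kappa_j$ and $\lambda_j^{\kappa_j}=\lambda_j$ guarantee this, by \cite{KTT}). The general principle, captured already in \cite{KTT,GKS}, is that such a $\kappa_j$-complete Boolean ultrapower preserves every cardinal characteristic that is bounded from above by a strongly $\kappa$-$\Rbf$-dominating family for some $\kappa<\kappa_j$, while each characteristic whose computing family has no such small bound gets lifted to $\lambda_j$. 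Accordingly, $j_7$ lifts $\non(\Nwf)$ to $\lambda_7$ (using item (5.2) of Example \ref{ExmPrs} together with the strongly $\theta_1$-$\Cn$-dominating family in $R_1$ and the strongly $\theta_0$-$\Lc^*$-dominating family in $R_0$ to keep $\cov(\Nwf)$ and $\add(\Nwf)$ fixed, and the strongly $\theta_2$-$\D$-dominating family in $R_2$ to keep $\dfrak=\lambda_6$); $j_8$ then lifts $\cof(\Nwf)$ to $\lambda_8$ while $\non(\Nwf)$ stays at $\lambda_7$ (via the dominating family of slaloms preserved by $\kappa_8$-completeness); and finally $j_9$ lifts $\cfrak=2^{\aleph_0}$ to $\lambda_9$ using Example \ref{ExmPrs}(5.2), where strongly unbounded/dominating families for $\Id$ are simply witnessed by size. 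At each step the cardinals below $\kappa_j$ are fixed because every $<\kappa_j$-directed poset of names in the model survives the ultrapower and still witnesses the appropriate bound.

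The key verification I would do carefully is the preservation of the \emph{lower} bounds after each ultrapower: that the values $\lambda_1,\ldots,\lambda_6$ (and after step~$j$ also the new values $\lambda_7,\ldots,\lambda_{j-1}$) remain untouched. For this, one argues as in \cite[Lemma 1.20 and Thm. 3.6]{KTT}: if a cardinal invariant $\xfrak=\bfrak(\Rbf)$ (or $\dfrak(\Rbf)$) is already $<\kappa_j$ in the source model, and it is realized by a family indexed by a $<\kappa_j$-directed partial order, then $j_j$ sends that family to one of the same size which still witnesses the same value. The strongly dominating families constructed inside $\la R_i,\unlhd\ra$ in the proof of Theorem \ref{Apl1} are tailor-made for this: they have the regularity and directedness required by \cite{KTT} and therefore feed directly into the ultrapower machinery. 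The main obstacle is to set up, \emph{in the ground model}, the cardinal arithmetic so that each successive ultrapower sees the right configuration: one must check that the hypotheses $\lambda_j^{\kappa_j}=\lambda_j$ (for $j=7,8,9$) together with $\kappa_j$ strongly compact and the chain $\kappa_j<\lambda_j<\kappa_{j+1}$ imply the corresponding arithmetic hypotheses in each intermediate model, which is exactly what conditions (I)--(IV) are designed to encode. Once this bookkeeping is in place, the three ultrapowers produce the desired ccc forcing and Cicho\'n's diagram is separated into $10$ distinct values, without assuming GCH in the ground model.
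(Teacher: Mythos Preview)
Your proposal is essentially correct and follows the same route as the paper: start from the ccc poset of Theorem~\ref{Apl1} (with $\theta_0=\lambda_1$, $\theta_1=\lambda_2$, $\theta_2=\lambda_3$, $\mu=\lambda_4$, $\nu=\lambda_5$, $\lambda=\lambda_6$) and then apply three Boolean ultrapowers $j_7,j_8,j_9$ in that order, tracking at each step how the strongly $\theta$-$\Rbf$-unbounded and strongly $\theta$-$\Rbf$-dominating families from the proof of Theorem~\ref{Apl1} transform via Lemma~\ref{elemembpres}.

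Two small corrections, neither fatal. First, the chain you wrote, ``$\kappa_j<\lambda_j<\kappa_{j+1}$'', is not what hypothesis (I) says: the strongly compact cardinals satisfy $\kappa_9<\kappa_8<\kappa_7<\lambda_3$, so all three sit below the block $\lambda_3\leq\cdots\leq\lambda_9$, and the ultrapowers are applied starting with the \emph{largest} compact cardinal $\kappa_7$ and working down. Second, your general principle has the inequality reversed: by Lemma~\ref{elemembpres}(c), it is the case $\theta>\kappa_j$ (not $\theta<\kappa_j$) in which the size of the witnessing directed set is preserved, while $\theta<\kappa_j$ is precisely when that size is pushed up to $|j_j(\lambda')|$. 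Your concrete applications are nonetheless right, because for $j_7$ you correctly keep $\dfrak$ fixed via the $\lambda_3$-$\Dbf$-dominating family (here $\lambda_3>\kappa_7$) while $\non(\Ncal)$ and $\cof(\Ncal)$ (governed by $\lambda_2$- and $\lambda_1$-dominating families with $\lambda_1,\lambda_2<\kappa_7$) get lifted; analogously for $j_8$ and $j_9$.
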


\begin{figure}
\begin{center}
  \includegraphics{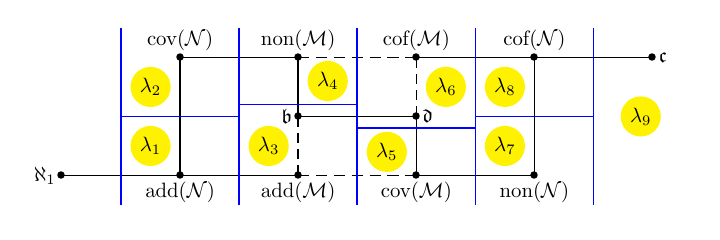}
  \caption{Cicho\'n's maximum}
  \label{FigCicmax}
\end{center}
\end{figure}

This result is justified by application of Boolean ultrapowers to the poset constructed in the proof of Theorem~\ref{Apl1} in the same way as in~\cite{KTT,GKS,KST}. We review this technique as follows. Let $\kappa$ be a strongly compact cardinal and $\lambda>\kappa$ regular such that\footnote{Without assuming GCH.} $\lambda^\kappa=\lambda$. Consider the Boolean completion $\Bor_{\kappa,\lambda}$ of the poset $\mathrm{Fn}_{{<}\kappa}(\lambda,\kappa)$ of partial functions from $\lambda$ to $\kappa$ with domain of size ${<}\kappa$ (ordered by $\supseteq$).

\begin{lemma}[{\cite{KTT,GKS}}]\label{BUP}
   There is a $\kappa$-complete ultrafilter $U$ on $\Bor_{\kappa,\lambda}$ such that its corresponding elementary embedding $j:V\to M$ satisfies:
   \begin{enumerate}[(a)]
   \item $M$ is closed under sequences of length ${<}\kappa$.
   \item $j$ has critical point $\kappa$, $\cf(j(\kappa))=\lambda$ and $\lambda\leq j(\kappa)<\lambda^+$.
   \item If $|A|<\kappa$ then $j[A]=j(A)$.
   \item If $\theta\geq\kappa$ and either $\theta\leq\lambda$ or $\theta^\kappa=\theta$, then $\max\{\lambda,\theta\}\leq j(\theta)<\max\{\lambda,\theta\}^+$.
   \item If $\theta>\kappa$ and $I$ is a ${<}\theta$-directed partial order then $j[I]$ is cofinal in $j(I)$.
   \item If $\cf(\alpha)\neq\kappa$ then $j[\alpha]$ is cofinal in $j(\alpha)$.
\end{enumerate}
\end{lemma}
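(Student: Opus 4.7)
The plan is to construct $U$ via strong compactness so that the resulting Boolean ultrapower $j\colon V\to M$ has critical point $\kappa$ with $j(\kappa)$ of cofinality $\lambda$, and then to verify (a)--(f) by a mix of standard ultrapower consequences of $\kappa$-completeness together with name-counting arguments that exploit $\lambda^\kappa=\lambda$. Concretely, let $\dot f$ be the canonical $\Bor_{\kappa,\lambda}$-name for the generic function $\lambda\to\kappa$, and let $F$ be the filter on $\Bor_{\kappa,\lambda}$ generated by the Boolean values $b_\alpha:=\|\alpha\in\dom(\dot f)\|$ for $\alpha<\lambda$. Any $<\kappa$-many $b_\alpha$ have nonzero meet (witnessed by the condition with the requisite domain of size $<\kappa$ and constant value $0$), so $F$ is $\kappa$-complete; by the $\lambda$-compactness of $\kappa$ implied by strong compactness, extend $F$ to a $\kappa$-complete ultrafilter $U$ on $\Bor_{\kappa,\lambda}$.

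Form the Boolean ultrapower in the standard way: classes $[\tau]_U$ of $\Bor_{\kappa,\lambda}$-names with $\tau\sim\sigma$ iff $\|\tau=\sigma\|\in U$, membership $[\tau]\,\hat E\,[\sigma]$ iff $\|\tau\in\sigma\|\in U$, and canonical embedding $j(x):=[\check x]_U$. \L o\'s's theorem holds by $\kappa$-completeness, so $j$ is elementary into $(M,\hat E)$; well-foundedness of $\hat E$ follows from $\aleph_1$-completeness, and after Mostowski collapse we may take $M$ transitive. Property (a) is then immediate: a $V$-sequence $\langle[\tau_\xi]_U:\xi<\gamma\rangle$ with $\gamma<\kappa$ is realized in $M$ by the class of the name $\langle\tau_\xi:\xi<\gamma\rangle^{\vee}$ via $\kappa$-completeness. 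Property (c) follows from (a) applied to an enumeration of $A$, yielding $j[A]=j(A)$ for $|A|<\kappa$.

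For (b), (c) gives $j\upharpoonright\kappa=\mathrm{id}$, so $\mathrm{crit}(j)\leq\kappa$. For each $\alpha<\lambda$, $b_\alpha\in U$ forces $\dot f(\alpha)<\kappa$, hence $[\dot f(\alpha)]_U\in j(\kappa)$; and for every $\beta<\kappa$ the set $\{p\leq b_\alpha:p\Vdash\dot f(\alpha)>\beta\}$ is dense below $b_\alpha$, giving $[\dot f(\alpha)]_U>j(\beta)$. So $\mathrm{crit}(j)=\kappa$, and a parallel density argument shows that the $[\dot f(\alpha)]_U$ $(\alpha<\lambda)$ are cofinal in $j(\kappa)$: any name for an ordinal $<\kappa$ is coded by a $\kappa^+$-cc antichain of support $\leq\kappa$ in $\lambda$, and picking $\alpha$ outside that support allows us to extend generically in $b_\alpha$ to force $\dot f(\alpha)$ above the realized value. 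Thus $\cf(j(\kappa))=\lambda$. The upper bound $j(\kappa)<\lambda^+$ follows by counting: $\Bor_{\kappa,\lambda}$ has the $\kappa^+$-cc, so each name for an ordinal $<\kappa$ is coded by a $\kappa$-sized antichain labeled by ordinals $<\kappa$, giving at most $\lambda^{\kappa}\cdot\kappa^{\kappa}=\lambda$ equivalence classes (using $\lambda^\kappa=\lambda$, which also absorbs $2^\kappa$). The same counting with $\theta$ in place of $\kappa$, under either $\theta\leq\lambda$ or $\theta^\kappa=\theta$, yields (d).

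Property (e) follows because any name $\dot a$ for an element of $I$ is determined by an antichain of size $\leq\kappa<\theta$ labeled by elements of $I$; $<\theta$-directedness of $I$ then produces a common upper bound $b\in I$ with $[\dot a]_U\leq j(b)$, so $j[I]$ is cofinal in $j(I)$. Property (f) splits: for $\cf(\alpha)<\kappa$, (a) transfers a witnessing cofinal sequence to $M$; for $\cf(\alpha)>\kappa$, apply (e) to $\alpha$ viewed as a $<\cf(\alpha)$-directed order. The main obstacle is the cofinality calculation for $j(\kappa)$ in (b), where the filter $F$ (encoding the generic-function structure that forces $\cf(j(\kappa))=\lambda$) must cohere with the $\kappa$-completeness of $U$ (pinning the critical point to $\kappa$) and with the support/cc analysis used to bound names; once (b) is in place, (d)--(f) reduce to routine bookkeeping with those same name counts.
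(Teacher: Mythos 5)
Your overall strategy --- extend a $\kappa$-complete filter on $\Bor_{\kappa,\lambda}$ to a $\kappa$-complete ultrafilter $U$ via strong compactness, form the Boolean ultrapower, then use $\kappa$-completeness for (a), (c), (e), (f) and name-counting via $\kappa^+$-cc for the upper bounds in (b) and (d) --- is the standard one and matches the cited references (the paper itself only cites \cite{KTT,GKS} without reproducing the argument). The verifications of (a), (c), the counting upper bounds, (e) and (f) are essentially correct. However, the crux, namely property (b), has a genuine gap in your write-up, and this gap propagates to the lower bounds $\lambda\le j(\kappa)$ in (b) and $\max\{\lambda,\theta\}\le j(\theta)$ in (d).

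The problem is the seed filter $F$ and the density argument replacing it. The Boolean values $b_\alpha:=\|\alpha\in\dom(\dot f)\|$ are all equal to $\mathds{1}$, since genericity forces $\dom(\dot f)=\lambda$; hence the filter $F$ they generate is the trivial filter $\{\mathds{1}\}$ and places no constraint whatsoever on $U$. Extending this $F$ by $\lambda$-compactness gives \emph{some} $\kappa$-complete ultrafilter, but, for instance, the filter generated by $\{\|\dot f(\alpha)=0\|:\alpha<\lambda\}$ is also $\kappa$-complete and can be used as the seed instead, and any ultrafilter $U'$ extending it satisfies $[\dot f(\alpha)]_{U'}=0$ for every $\alpha$, so the $[\dot f(\alpha)]_{U'}$ are not cofinal in $j(\kappa)$. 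Your attempted repair via density also fails on two counts. First, the set $\{p\le b_\alpha: p\Vdash\dot f(\alpha)>\beta\}$ is \emph{not} dense (below $\mathds{1}$): any condition $p$ with $\alpha\in\dom p$ and $p(\alpha)\le\beta$ has no extension forcing $\dot f(\alpha)>\beta$. Second, and more fundamentally, density of a set of conditions does not yield membership of a Boolean value in $U$; that inference is valid for a \emph{generic} filter, but $U\in V$ is emphatically not generic. Consequently, nothing in your argument establishes $\kappa\le[\dot f(\alpha)]_U<j(\kappa)$, let alone the cofinality computation $\cf(j(\kappa))=\lambda$.

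To fix this, the seed filter must actually encode the unboundedness of the $[\dot f(\alpha)]_U$: one needs $U$ to contain, for each nice $\Bor_{\kappa,\lambda}$-name $\dot\gamma$ for an ordinal $<\kappa$ (with support $S_{\dot\gamma}\subseteq\lambda$ of size $\le\kappa$) and each $\alpha\notin S_{\dot\gamma}$, the Boolean value $\|\dot f(\alpha)>\dot\gamma\|$, and one must argue that these values together with the $\|\dot f(\alpha)>\beta\|$ for $\beta<\kappa$ generate a $\kappa$-complete filter (this is the nontrivial coherence check, using mutual genericity across disjoint supports and the $<\kappa$-closure of the poset). Alternatively, one can transfer a fine $\kappa$-complete ultrafilter on $P_\kappa(\lambda)$ to $\Bor_{\kappa,\lambda}$ through the strictly increasing conditions $q_x$ ($x\in P_\kappa(\lambda)$). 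Either way, the cofinality statement in (b) requires a filter that is genuinely nontrivial, which your $F$ is not, and it cannot be recovered afterwards by density. Since you yourself flag (b) as the main obstacle, I would focus the revision there; once (b) is in place your bookkeeping for (d)--(f) goes through as written.
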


As a consequence,

\begin{lemma}[{\cite{KTT,GKS}}, see also {\cite[Thm.~1.18]{GKMSproc}}]\label{elemembpres}
   Additionally to the above, assume that $\Rbf=\la X,Y,\sqsubset\ra$ is an analytic relational system (i.e., $X$, $Y$ and $\sqsubset$ are analytic in some Polish space), $\theta$ is an uncountable regular cardinal and that $\Por$ is a ccc poset. Then:
   \begin{enumerate}[(a)]
      \item $j(\Por)$ is ccc (in $V$, not just in $M$).
      \item If $\Por$ adds a strongly $\theta$-$\Rbf$-unbounded family of size $\theta$, then $j(\Por)$ adds a strongly $\cf(j(\theta))$-$\Rbf$-unbounded family of size $\cf(j(\theta))$.
      \item If $\Por$ adds a strongly $\theta$-$\Rbf$-dominating family with witnessing directed set $L$ in the ground model such that $|L|=\lambda'$, then
         \begin{enumerate}[(i)]
            \item whenever $\theta<\kappa$, $j(\Por)$ adds a strongly $\theta$-$\Rbf$ dominating family with witnessing directed set of size $|j(\lambda')|$;
            \item whenever $\kappa<\theta$, $j(\Por)$ adds a strongly $\theta$-$\Rbf$ dominating family with witnessing directed set of size $\lambda'$.
         \end{enumerate}
         In both cases, the witnessing directed set can be obtained in the ground model.
   \end{enumerate}
\end{lemma}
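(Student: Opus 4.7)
The plan is to take the ccc poset $\Por^{(0)}$ from Theorem \ref{Apl1} instantiated with parameters $(\theta_0,\theta_1,\theta_2,\mu,\nu,\lambda) = (\lambda_1,\lambda_2,\lambda_3,\lambda_4,\lambda_5,\lambda_6)$ (the arithmetic hypothesis (III) is exactly what Theorem \ref{Apl1} requires), and then iterate Boolean ultrapowers following the template of \cite{KTT,GKS,KST}. Concretely, for $j = 9, 8, 7$ in that order, set $\Por^{(j)} := j_j(\Por^{(j-1)})$, where $j_j\colon V \to M_j$ is the Boolean ultrapower from Lemma \ref{BUP} associated with the strongly compact $\kappa_j$ and target $\lambda_j$ (using (IV) that $\lambda_j^{\kappa_j} = \lambda_j$). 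By Lemma \ref{elemembpres}(a), each $\Por^{(j)}$ remains ccc in $V$; the final poset $\Por^{(7)}$ is the candidate.

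For the five left-side values $\lambda_1,\dots,\lambda_5$, I would check preservation by tracking the strongly unbounded and strongly dominating families that Theorem \ref{Apl1} supplies. For each such $\lambda_i$ ($i \leq 5$), the interleaving in (I) puts every critical point $\kappa_j$ strictly below or strictly above $\lambda_i$, hence by Lemma \ref{BUP}(f) together with regularity of $\lambda_i$ we get $\cf(j_j(\lambda_i)) = \lambda_i$; thus by Lemma \ref{elemembpres}(b) the strongly $\lambda_i$-$\Rbf_i$-unbounded family of size $\lambda_i$ passes through the three ultrapowers still of size $\lambda_i$, giving the desired upper bound. The matching lower bounds come from the strongly $\lambda_i$-$\Rbf_i$-dominating families of Theorem \ref{Apl1}, whose $\theta$-parameter is preserved by elementarity; the size of the witnessing directed sets can only grow in the controlled way governed by Lemma \ref{elemembpres}(c), which is harmless for the left side.

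The heart of the argument is the right side. Initially $\dfrak = \non(\Ncal) = \cof(\Ncal) = \cfrak = \lambda_6$ in $V^{\Por^{(0)}}$, and the three Boolean ultrapowers must separate these. The mechanism, via Lemma \ref{elemembpres}(b) and Lemma \ref{BUP}(b), is that a strongly $\kappa_j$-$\Rbf$-unbounded family of size $\kappa_j$ becomes a strongly $\lambda_j$-$\Rbf$-unbounded family of size $\lambda_j$ after applying $j_j$, since $\cf(j_j(\kappa_j)) = \lambda_j$. Because (I) ensures each $\kappa_j$ is regular and lies in the appropriate interval, $\Por^{(0)}$ contains the needed families: strongly $\kappa_7$-$\Cn$-unbounded and $\kappa_8$-$\Lc^*$-unbounded of sizes $\kappa_7, \kappa_8$ respectively (from Theorem \ref{Apl1}, since $\kappa_7 \in [\lambda_2,\lambda_6]$ and $\kappa_8 \in [\lambda_1,\lambda_6]$), and a strongly $\kappa_9$-$\Id$-unbounded family of size $\kappa_9$ (trivially, since $\lambda_6 > \kappa_9$). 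Thus $j_9$ pushes $\cfrak$ up to $\lambda_9$, $j_8$ pushes $\cof(\Ncal)$ to $\lambda_8$, and $j_7$ pushes $\non(\Ncal)$ to $\lambda_7$. The matching upper bounds come from strongly dominating families of $\Por^{(0)}$: $\dfrak \leq \lambda_6$ from the strongly $\lambda_3$-$\D$-dominating family, whose directed set remains size $\lambda_6$ through all three $j_j$ by Lemma \ref{elemembpres}(c)(ii) since $\lambda_3 > \kappa_7 > \kappa_8 > \kappa_9$; $\cof(\Ncal) \leq \lambda_8$ from the strongly $\lambda_1$-$\Lc^*$-dominating family, whose directed set grows from $\lambda_6$ to $\lambda_8$ under $j_8$ by case (i) (since $\lambda_1 < \kappa_8$) and then stays at $\lambda_8$ under $j_7$; and $\non(\Ncal) \leq \lambda_7$ from the strongly $\lambda_2$-$\Cn$-dominating family analogously. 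Finally $\cfrak \leq \lambda_9$ follows from the size of $\Por^{(7)}$, bounded by iterated application of Lemma \ref{BUP}(d).

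The main obstacle is the bookkeeping: for each of the nine invariants, verifying that exactly one strongly unbounded or strongly dominating family among the many added by $\Por^{(0)}$ lands at precisely the intended size after the composition $j_7 \circ j_8 \circ j_9$. The interleaving of critical points and targets in (I) is rigged so that for each invariant, exactly one of the three ultrapowers has the critical point that triggers the desired push-up (via $\cf(j_j(\kappa_j)) = \lambda_j$), while the other two act trivially on the relevant cardinal. A subsidiary technical point is that one must verify the cardinal arithmetic hypotheses suffice to invoke Lemma \ref{BUP}(d) at every stage; in particular, $\lambda_8^{\kappa_7} = \lambda_8$ and $\lambda_9^{\kappa_n} = \lambda_9$ for $n \in \{7,8\}$, which follow from (IV) together with the monotonicity $\kappa_7 < \kappa_8$ and $\kappa_n < \kappa_9$. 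These absorption arguments, and the parallel absorption of $\lambda_7,\lambda_8$ under subsequent ultrapowers, constitute the most delicate bookkeeping of the proof.
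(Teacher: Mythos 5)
Your proposal does not prove Lemma \ref{elemembpres} at all; it is instead an outline of the proof of Theorem \ref{cichonmax3}, which \emph{uses} Lemma \ref{elemembpres} (parts (a), (b), (c)) as a black box. Everything in your write-up --- instantiating Theorem \ref{Apl1} with the parameters $\lambda_1,\dots,\lambda_6$, composing the three Boolean ultrapowers $j_7\circ j_8\circ j_9$, the bookkeeping of how each critical point $\kappa_j$ pushes one right-side invariant up to $\lambda_j$ --- is about the downstream application, not about the lemma you were asked to prove. Lemma \ref{elemembpres} concerns a \emph{single} elementary embedding $j:V\to M$ as in Lemma \ref{BUP}, an abstract analytic relational system $\Rbf$ and an abstract ccc poset $\Por$; your proposal never argues why, say, a strongly $\theta$-$\Rbf$-unbounded family added by $\Por$ is transported by $j$ into a strongly $\cf(j(\theta))$-$\Rbf$-unbounded family added by $j(\Por)$.

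The actual proof of (b) goes roughly as follows: given a sequence $\langle\dot c(\alpha):\alpha<\theta\rangle$ witnessing that $\Por$ adds a strongly $\theta$-$\Rbf$-unbounded family, the ccc of $\Por$ and the regularity of $\theta$ give, for each $\Por$-name $\dot z$ for an element of $Y$, a single $\alpha<\theta$ such that $\Vdash_\Por \dot c(\beta)\not\sqsubset\dot z$ for all $\beta\in[\alpha,\theta)$. This is a first-order statement that transports under $j$ to $M$, yielding the corresponding statement about $j(\dot c)$ and $j(\theta)$ for all $j(\Por)$-names $\dot z'$ (and one must note that every nice $j(\Por)$-name of a real lies in $M$, because $M^{<\kappa}\subseteq M$ and $j(\Por)$ is ccc). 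Since $\Rbf$ is analytic, $\Sigma^1_1$-absoluteness transfers the statement from $M$ to $V$. Restricting to an increasing cofinal $f:\cf(j(\theta))\to j(\theta)$ then produces the desired strongly $\cf(j(\theta))$-$\Rbf$-unbounded family. Part (c) is proved by transporting the dominating-family statement under $j$ and then splitting on whether $\theta<\kappa$ (so $j(\theta)=\theta$ and $j(L)$ is still $<\theta$-directed, of size $|j(\lambda')|$) or $\kappa<\theta$ (so Lemma \ref{BUP}(e) gives that $j[L]\cong L$ is cofinal in $j(L)$, yielding a witnessing directed set of size $\lambda'$). Part (a) is immediate from Lemma \ref{BUP}(a). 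None of these steps appears in your proposal, so as a proof of Lemma \ref{elemembpres} it has a gap equal to its entire content.
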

\begin{proof}
   We include the proof for completeness. Property (a) follows from Lemma~\ref{BUP}(a). To see property (b), let $\{\dot{c}(\alpha) : \alpha<\theta\}$ be a strongly $\theta$-$\Rbf$-unbounded family added by $\Por$. Since $\Por$ is ccc, $\exists \alpha<\theta\forall\beta\in[\alpha,\theta)(\Vdash_\Por \dot{c}(\beta)\not\sqsubset \dot{z})$ for any $\Por$-name $\dot{z}$ of a real in $Y$, thus
   \[M\models \exists \alpha<j(\theta)\forall\beta\in[\alpha,j(\theta))(\Vdash_{j(\Por)} j(\dot{c})(\beta)\not\sqsubset \dot{z}')\]
   for any $j(\Por)$-name $\dot{z}'$ of a real in $Y$ (note that every nice $j(\Por)$-name of a real is in $M$). Since $\Rbf$ is analytic, the same statement holds in $V$. Therefore, if $f:\cf(j(\theta))\to j(\theta)$ is an increasing cofinal function, then $\Por$ forces that $\{j(\dot{c})(f(\xi)): \xi<\cf(j(\theta))\}$ is a strongly $\cf(j(\theta))$-$\Rbf$-unbounded family.

   We finally show (c). Assume that $p\in\Por$ forces that $\{\dot{a}(l):l\in L\}$ is a strongly $\theta$-$\Rbf$-dominating family. Hence
   \[M\models ``j(p)\Vdash_{j(\Por)} \{j(\dot{a})(l):l\in j(L)\} \text{\ is a strongly $j(\theta)$-$\Rbf$-dominating family}''.\]
   If $\theta<\kappa$ then $j(\theta)=\theta$ and $j(L)$ is ${<}\theta$-directed (in $M$, but also in $V$) of size $|j(\lambda')|$; else, if $\kappa<\theta$, by Lemma~\ref{BUP}(e) we have that $j[L]$ is cofinal in $j(L)$, so $j(p)$ forces (in $V$) that $\{j(\dot{a})(j(l)):l\in L\}$ is a strongly $\theta$-$\Rbf$-dominating family.
\end{proof}

\begin{proof}[Proof of Theorem~\ref{cichonmax3}]
   Denote $\Rbf_0:=\Id$ $\Rbf_1:=\Lc^*$, $\Rbf_2:=\Cn$, $\Rbf_3:=\Dbf$, and $\Rbf_4:=\Ed$. Let $\Por_6$ be the poset constructed in Theorem~\ref{Apl1} applied to $\theta_i=\lambda_{i+1}$ for $i<3$, $\mu=\lambda_4$, $\nu=\lambda_5$ and $\lambda=\lambda_6$. Also let $\lambda_0:=\aleph_1$. Recall that $\Por_6$ adds
   \begin{itemize}
       \item[($\mathrm{U}_6$1)] a strongly $\kappa$-$\Rbf_i$-unbounded family of size $\kappa$ for $i<4$ and each regular $\kappa\in[\lambda_i,\lambda_6]$;
       \item[($\mathrm{U}_6$2)]\label{U62} a strongly $\lambda_i$-$\Rbf_4$-unbounded family of size $\lambda_i$ for $i\in\{4,5\}$;
       \item[($\mathrm{D}_6$1)] a strongly $\lambda_4$-$\Rbf_4$-dominating family with witnessing directed set of size $\lambda_5$ in the ground model; and
       \item[($\mathrm{D}_6$2)] a strongly $\lambda_i$-$\Rbf_i$-dominating family with witnessing directed set of size $\lambda_6$ in the ground model, for $1\leq i<4$.
   \end{itemize}
   Let $j_7:V\to M_7$ be the elementary embedding obtained from $\Bor_{\kappa_7,\lambda_7}$ as in the previous discussion, and let $\Por_7:=j_7(\Por_6)$. By Lemma~\ref{elemembpres}, $\Por_7$ is ccc and it adds
   \begin{itemize}
       \item[($\mathrm{U}_7$1)] a strongly $\kappa$-$\Rbf_i$-unbounded family of size $\kappa$ for $i<4$ and each regular $\kappa\in[\lambda_i,\lambda_6]\menos\{\kappa_7\}$;
       \item[($\mathrm{U}_7$2)] a strongly $\lambda_7$-$\Rbf_i$-unbounded family of size $\lambda_7$ for $i<3$
       \item[($\mathrm{U}_7$3)] a strongly $\lambda_i$-$\Rbf_4$-unbounded family of size $\lambda_i$ for $i\in\{4,5\}$;
       \item[($\mathrm{D}_7$1)] a strongly $\lambda_4$-$\Rbf_4$-dominating family with witnessing directed set of size $\lambda_5$;
       \item[($\mathrm{D}_7$2)] a strongly $\lambda_3$-$\Rbf_3$-dominating family with witnessing directed set of size $\lambda_6$; and
       \item[($\mathrm{D}_7$3)] a strongly $\lambda_i$-$\Rbf_i$-dominating family with witnessing directed set of size $\lambda_7$ for $1\leq i<3$.
   \end{itemize}
   This process is repeated a couple of times with $\kappa_8$ and $\kappa_9$. Let $j_8:V\to M_8$ be the elementary embedding obtained from $\Bor_{\kappa_8,\lambda_8}$ and set $\Por_8:=j_8(\Por_7)$. This poset is ccc and it adds
   \begin{itemize}
       \item[($\mathrm{U}_8$1)] a strongly $\kappa$-$\Rbf_i$-unbounded family of size $\kappa$ for $i<4$ and each regular $\kappa\in[\lambda_i,\lambda_6]\menos\{\kappa_7,\kappa_8\}$;
       \item[($\mathrm{U}_8$2)] a strongly $\lambda_7$-$\Rbf_i$-unbounded family of size $\lambda_7$ for $i<3$,
       \item[($\mathrm{U}_8$3)] a strongly $\lambda_8$-$\Rbf_i$-unbounded family of size $\lambda_8$ for $i<2$
       \item[($\mathrm{U}_8$4)] a strongly $\lambda_i$-$\Rbf_4$-unbounded family of size $\lambda_i$ for $i\in\{4,5\}$;
       \item[($\mathrm{D}_8$1)] a strongly $\lambda_4$-$\Rbf_4$-dominating family with witnessing directed set of size $\lambda_5$;
       \item[($\mathrm{D}_8$2)] a strongly $\lambda_3$-$\Rbf_3$-dominating family with witnessing directed set of size $\lambda_6$;
       \item[($\mathrm{D}_8$3)] a strongly $\lambda_2$-$\Rbf_2$-dominating family with witnessing directed set of size $\lambda_7$; and
       \item[($\mathrm{D}_8$4)] a strongly $\lambda_1$-$\Rbf_1$-dominating family with witnessing directed set of size $\lambda_8$.
   \end{itemize}
   Let $j_9:V\to M_9$ be the elementary embedding obtained from $\Bor_{\kappa_9,\lambda_9}$ and set $\Por_9:=j_9(\Por_8)$. This set is ccc and it satisfies the previous ($\mathrm{U}_8$1)--($\mathrm{U}_8$4) and ($\mathrm{D}_8$1)--($\mathrm{D}_8$4), with the exception that ($\mathrm{U}_8$1) does not hold for $\kappa=\kappa_9$. In addition, $\Por_9$ adds a strongly $\lambda_9$-$\Rbf_0$-unbounded family of size $\lambda_9$, so it forces $\lambda_9\leq\cfrak$ (see Example~\ref{ExmPrs}(5)). On the other hand, $|\Por_9|=|j_9(j_8(j_7(\lambda_6)))|=\lambda_9$, so $\Por_9$ forces $\cfrak\leq\lambda_9$.
   By the properties listed above, $\Por_9$ is the desired poset.
\end{proof}

\begin{remark}
   Due to the methods presented in~\cite{GKMSproc}, in Theorem~\ref{cichonmax3} we can weaken the conditions on $\lambda_9$: no need to assume that it is regular, and it satisfies
   $\lambda_9^{\aleph_0}=\lambda_9$.
\end{remark}

\section{Discussions}\label{SecDisc}

At the end of this section, we present some advancements related to this research that were made during the evaluation process of this paper.

In Theorem~\ref{Apl1} (Theorem~\ref{thm7val}) we separated one additional value in the right side of Cicho\'n's diagram with respect to the constellation proved in~\cite{GMS} (see (\ref{leftGMS}) in the introduction). We ask if we could do the same to the constellation from~\cite{KST}, concretely,

\begin{question}\label{Qanother7}
   Can it be forced, without using large cardinals, that
   \[\aleph_1<\add(\Nwf)<\bfrak<\cov(\Nwf)<\non(\Mwf)<\cov(\Mwf)<\non(\Nwf)=\dfrak=\cfrak\text{?}\]
\end{question}
If this is possible, the large cardinal hypothesis from the main result in~\cite{KST} can be reduced to three strongly compact cardinals.

The matrix iteration technique of this text seem not to be enough to deal with this problem since, to give desired values to $\cov(\Nwf)$ and $\non(\Mwf)$ without increasing $\bfrak$ too much, we need to deal with restrictions of random forcing and $\Eor$ simultaneously, so they cannot be included in the same way in the matrix construction (a bit more in detail, only one could be the restriction to $V_{\Delta(\xi),\xi}$, but the other must be other type of restriction). On the other hand, similar to~\cite{KST}, dealing with ultrafilters may not be enough, so the matrix construction may include finitely additive measures instead.

The reader may have noticed that we did not force a value of $\afrak$ in Theorem~\ref{cichonmax3} after using Boolean ultrapowers. The reason is that the Boolean ultrapowers from $\Bor_{\kappa,\lambda}$ applied to a ccc poset $\Por$ destroys all the mad families of size $\geq\kappa$ added by $\Por$ in the same way as the ultrapower from a measurable cardinal destroys them (see~\cite{Sh:700,brendle02}). This leads us to ask whether a value of $\afrak$ can be forced in Theorem~\ref{cichonmax3}, or even in the consistency results from~\cite{GKS,KST}.

By a slight modification, the poset constructed in Theorem~\ref{Apl1} can force $\mathrm{MA}_{<\lambda_1}$ (with $\lambda_1=\theta_0$), and it is not hard to see that $\Por_8$ from the proof of Theorem~\ref{cichonmax3} also forces this. Though we can  guarantee that $\Por_9$ forces $\mathrm{MA}_{{<}\kappa_9}$, it is unclear whether it forces $\mathrm{MA}_{<\lambda_1}$.

\subsection*{Updates} As mentioned in the introduction, \cite{GKMS} showed that Cicho\'n's diagram can consistently be separated into 10 values without assuming large cardinals, concretely, for the instances (\ref{maxGKS}) and (\ref{maxKST}) (see Section~\ref{SecIntro}, in particular Question~\ref{Qanother7} is solved in the positive). The dynamic of the proof is similar to the original~\cite{GKS}: start with a ccc poset $\Por^0$ that separates the left hand side of Cicho\'n's diagram, e.g.\ (\ref{leftKST}) and (\ref{leftGMS}), but instead of taking Boolean ultrapowers, intersect $\Por^0$ with $\sigma$-closed elementary submodels of $H_\chi$ (for some large enough regular $\chi$) so that the resulting poset forces Cicho\'n's diagram separated into 10 values.

This new method still relies on a forcing that separates the left side of the diagram. Although the poset from~\cite{GMS} does this job, the new method is incompatible with conditions (P1)--(P3) (see Section~\ref{SecIntro}), so a modification of this forcing as in~\cite{GKS} is necessary to get a poset compatible with the new method. The same happens with the Boolean ultrapower method.  On the other hand, the poset we presented in Theorem~\ref{Apl1} is already compatible with the new method, and less difficult to construct in comparison with the forcing from~\cite{GMS,GKS}.

In relation with the previous discussion about MA, \cite{GKMSmore} shows how to separate other classical cardinal characteristics of the continuum (without using large cardinals), in addition to those in Cicho\'n's diagram. In particular, $\mathfrak m$ (the smallest cardinal where MA fails) can be forced to be any chosen regular value between $\aleph_1$ and (the intended) $\add(\Nwf)$. With respect to the Boolean ultrapower method, in~\cite{GKMSproc} the forcing from Theorem~\ref{cichonmax3} is modified to force, in addition, that $\mathfrak m$ can be any previously chosen regular value between $\aleph_1$ and $\kappa_9$.

{\small
%\addcontentsline{toc}{section}{References}
\bibliography{bibli}
\bibliographystyle{alpha}
}

\end{document}